 \newtheorem{thm}{Theorem}[section]
 \newtheorem{cor}[thm]{Corollary}
 \newtheorem{lem}[thm]{Lemma}
 \newtheorem{prop}[thm]{Proposition}
 \theoremstyle{definition}
 \theoremstyle{remark}
 \newtheorem{rem}[thm]{Remark}
 \theoremstyle{remark}
 \newtheorem{rems}[thm]{Remarks}
 \numberwithin{equation}{section}
\newcommand{\nn}{\mathbb N}
\newcommand{\ZZ}{\mathbb Z}
\newcommand{\C}{\mathbb C}
\newcommand{\R}{\mathbb{R}}
\newcommand{\BB}{\mathbb{B}}
\newcommand{\EE}{\mathbb E}
\newcommand{\FF}{\mathbb F}
\newcommand{\cR}{{\mathcal R}}
\newcommand{\cH}{{\mathcal H}}
\newcommand{\q}{q}
\def\ba{\bar}
\def\eps{\varepsilon}
\newcommand{\N}{\mathbb N}
\newcommand{\Li}{\mathcal L}
\DeclareMathSymbol{\complement}{\mathord}{AMSa}{"7B}
\def\vv<#1>{\langle #1\rangle}
\def\Vv<#1>{\bigl\langle #1\bigr\rangle}
\begin{document}
\title[Two-Phase Navier-Stokes Equations]
{Analytic solutions for the two-phase Navier-Stokes equations with surface tension and gravity}

\author[J. Pr\"uss]{Jan Pr\"uss}
\address{Institut f\"ur Mathematik  \\
         Martin-Luther-Universit\"at Halle-Witten\-berg\\
         Theodor-Lieser-Str.~5\\
         D-60120 Halle, Germany}
\email{jan.pruess@mathematik.uni-halle.de}

\author[G. Simonett]{Gieri Simonett}
\address{Department of Mathematics\\
           Vanderbilt University
           Nashville, TN}
\email{gieri.simonett@vanderbilt.edu}

\thanks{The research of GS was partially
supported by NSF, Grant DMS-0600870.}

\subjclass[2000]{Primary: 35R35. Secondary: 35Q10, 76D03, 76D45, 76T05}

\keywords{Navier-Stokes equations, free boundary problem, 
surface tension, gravity, Rayleigh-Taylor instability, well-posedness, analyticity.}

\dedicatory{Dedicated to Herbert Amann on the occasion of his 70th birthday}

\begin{abstract}
We consider the motion of two superposed immiscible, viscous, incompressible, capillary fluids that are separated by a sharp interface which needs to be 
determined as part of the problem. 
Allowing for gravity to act on the fluids, we
prove local well-posedness of the problem.
In particular, we obtain well-posedness for the case where
the heavy fluid lies on top of the light one, that is, for the case
where the Rayleigh-Taylor instability is present.
Additionally we show that solutions become real analytic instantaneously.
\end{abstract}

\maketitle
\section{Introduction and Main Results}
We consider a free boundary problem describing the
motion of two immiscible, viscous, incompressible capillary fluids,
{\em fluid$_1$} and {\em fluid$_2$},
occupying the regions
\begin{equation*}
\Omega_i(t)=\{(x,y)\in\R^n\times\R: (-1)^i(y-h(t,x))>0,\ t\ge 0\},
\quad i=1,2.
\end{equation*}
The fluids, thus, are separated by the interface
\begin{equation*}
\Gamma(t):=\{(x,y)\in\R^n\times\R: y=h(t,x):x\in\R^n,\ t\ge 0\},
\end{equation*}
called the free boundary,
which needs to be determined as part of the problem.
The motion of the fluids is governed by the incompressible
Navier-Stokes equations where surface tension
on the free boundary is included.
In addition, we also allow for gravity to act on the fluids.
The governing equations then are given by the system
\begin{equation}
\label{NS-2}
\left\{
\begin{aligned}
 \rho\big(\partial_tu+(u|\nabla)u\big)-\mu\Delta u+\nabla \q
                   &=  0\ &\hbox{in}\quad &\Omega(t)\\
      {\rm div}\,u & = 0\ &\hbox{in}\quad &\Omega(t) \\
     -{[\![S(u,\q)\nu]\!]}& = \sigma\kappa \nu +[\![\rho]\!]\gamma_a y&\hbox{on}\quad &\Gamma(t)\\
       {[\![u]\!]} & = 0\ &\ \hbox{on}\quad &\Gamma(t) \\
                  V& = (u|\nu) &\ \hbox{on}\quad &\Gamma(t)\\
               u(0)& = u_0&\ \hbox{in}\quad &\Omega_{0}\\
          \Gamma(0)& = \Gamma_0\,.\\
\end{aligned}
\right.
\end{equation}
Here $\rho$ and $\mu$ are given by
\begin{equation*}
\rho=\rho_1\chi_{\Omega_1(t)}+ \rho_2\chi_{\Omega_2(t)},
\quad
\mu=\mu_1\chi_{\Omega_1(t)}+\mu_2\chi_{\Omega_2(t)},
\end{equation*}
with $\chi$ the indicator function,
where the constants $\rho_i$ and $\mu_i$ denote the densities
and viscosities of the respective fluids.
The constant $\sigma>0$ denotes the surface tension,
and $\gamma_a$ is the acceleration of gravity.
Moreover,
$S(u,q)$ is the stress tensor defined by
\begin{equation*}
S(u,\q)=\mu_i\big(\nabla u+(\nabla u)^{\sf T}\big)-qI
\quad \text{in}\quad \Omega_i(t),
\end{equation*}
where $q=\tilde q+\rho\gamma_ay$ denotes the modified pressure
incorporating the potential of the gravity force,
and
$$
[\![v]\!]=(v_{|_{\Omega_2(t)}}-v_{|_{\Omega_1(t)}}\big)|_{\Gamma(t)}
$$
denotes the jump of the quantity $v$, defined on the respective
domains $\Omega_i(t)$, across the interface $\Gamma(t)$.
Finally,
$\kappa=\kappa(t,\cdot)$ is the mean curvature of the free boundary $\Gamma(t)$,
$\nu=\nu(t,\cdot)$ is the unit normal field on
$\Gamma(t)$, and $V=V(t,\cdot)$ is the normal velocity of $\Gamma(t)$.
Here we use the convention that
$\nu(t,\cdot)$ points from $\Omega_1(t)$ into $\Omega_2(t)$, and
that $\kappa(x,t)$ is negative when $\Omega_1(t)$ is convex
in a neighborhood of $x\in\Gamma(t)$.
\smallskip\\
Given are the initial position $\Gamma_{0}=\text{graph}\,(h_0)$
of the interface, and the initial velocity
$$
u_0:\Omega_0\to \R^{n+1},\quad \Omega_0:=\Omega_1(0)\cup\Omega_2(0).
$$
The unknowns are the velocity field
$u(t,\cdot):\Omega(t)\to \R^{n+1}$,
the pressure field $\q(t,\cdot):\Omega(t)\to \R$,
and the free boundary $\Gamma(t)$,
where $\Omega(t):=\Omega_1(t)\cup\Omega_2(t)$.
\smallskip\\
Our main result shows that problem~\eqref{NS-2} admits
a unique local smooth solution, provided that 
$\|\nabla h_0\|_\infty:=\sup_{x\in\R^n}|\nabla h_0(x)|$ is sufficiently small.
\begin{thm}
\label{th:1.1}
Let $p>n+3$.
Then given $\beta>0$, there exists $\eta=\eta(\beta)>0$ such
that for all initial values
$$(u_0, h_0)\in W^{2-2/p}_p(\Omega_0,\R^{n+1})
         \times W^{3-2/p}_p(\R^n),
\quad [\![u_0]\!]=0,
$$
satisfying the compatibility conditions
$$
[\![\mu D(u_0)\nu_0-\mu(\nu_0|D(u_0)\nu_0)\nu_0]\!]=0,
\quad {\rm div} \; u_0=0\; \text{ on }\; \Omega_0,
$$
with $D(u_0):=(\nabla u_0+(\nabla u_0)^{\sf T})$,
and the smallness-boundedness condition
$$
\|\nabla h_0\|_\infty\le\eta,\qquad\|u_0\|_{\infty}\le\beta,
$$
there is $t_0=t_0(u_0,h_0)>0$
such that problem \eqref{NS-2}
admits a classical solution $(u,q,\Gamma)$ on $(0,t_0)$.
The solution is unique in the function class described in
Theorem~\ref{th:nonlinear}.
In addition, $\Gamma(t)$
is a graph over $\R^n$ given
by a function $h(t)$ and
${\mathcal M}=\bigcup_{t\in(0,t_0)}\big(\{t\}\times\Gamma(t)\big)$
is a real analytic manifold, and with
$${\mathcal O}:=\{(t,x,y):\; t\in(0,t_0),\; x\in\R^n,
y\neq h(t,x)\},$$
the function $(u,\q):\mathcal O\rightarrow\R^{n+2}$ is real analytic.
\end{thm}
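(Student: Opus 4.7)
The plan is to proceed in three stages: pull the free boundary problem back to a fixed geometry via a Hanzawa-type transformation, establish local existence for the resulting quasilinear system by combining maximal $L_p$-regularity of the linearization with a contraction argument, and finally upgrade the solution to real analyticity via a parameter-dependent scaling argument.

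For the transformation, since $\Gamma_0=\text{graph}\,(h_0)$ with $\|\nabla h_0\|_\infty$ small, I would introduce the diffeomorphism $(x,y)\mapsto(x,y-h(t,x))$ (smoothed in a small strip around $\R^n\times\{0\}$ if needed) to pull the moving interface back to the flat hyperplane. In the new coordinates the system \eqref{NS-2} becomes a quasilinear parabolic problem on the fixed configuration $\R^n\times(\R\setminus\{0\})$ with unknowns $(u,q,h)$; the principal linear part is the two-phase Stokes operator coupled to $h$ through the surface-tension boundary condition involving $\sigma\Delta h$, while the Rayleigh-Taylor term $[\![\rho]\!]\gamma_a h$ enters as a zero-order perturbation.

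For local existence, I would work in the anisotropic maximal-regularity class in which $u$ lies in $H^1_p(0,T;L_p)\cap L_p(0,T;H^2_p)$, the pressure $q$ in $L_p(0,T;\dot H^1_p)$, and $h$ in the associated trace class $H^1_p(0,T;W^{2-1/p}_p)\cap L_p(0,T;W^{3-1/p}_p)$, whose traces at $t=0$ reproduce the data regularity in the theorem. The assumption $p>n+3$ yields the embedding $W^{2-2/p}_p\hookrightarrow C^{1+\alpha}$, so the nonlinearities arising from the Hanzawa change of variables and from the convective term $(u|\nabla)u$ are locally Lipschitz with small constants on small balls around a reference extension of the initial data. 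Combined with maximal $L_p$-regularity of the linearized two-phase Stokes problem (Theorem~\ref{th:nonlinear}), a standard contraction mapping on a short time interval produces the unique solution. The sign of $\gamma_a$ is irrelevant at this stage, since lower-order terms do not affect maximal regularity on bounded intervals; this is precisely what makes the Rayleigh-Taylor unstable configuration accessible.

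For analyticity, I would apply the parameter trick of Masuda-Angenent-Escher-Simonett. Introduce the family $T_{\lambda,\xi}(t,x,y)=(\lambda t,x+\lambda t\xi,y)$ with $(\lambda,\xi)$ in a complex neighborhood of $(1,0)\in\cc\times\cc^n$, and consider the pulled-back unknowns $u_{\lambda,\xi}=u\circ T_{\lambda,\xi}$, and analogously for $q$ and $h$. These solve a perturbed problem whose coefficients depend analytically on $(\lambda,\xi)$ while still falling in the maximal regularity framework uniformly near $(1,0)$; the implicit function theorem then yields a unique family $(u_{\lambda,\xi},q_{\lambda,\xi},h_{\lambda,\xi})$ depending analytically on the parameters. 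Differentiating the relation $u(\lambda t_0,x_0+\lambda t_0\xi,y_0)=u_{\lambda,\xi}(t_0,x_0,y_0)$ in $(\lambda,\xi)$ at $(1,0)$ and choosing $t_0>0$ arbitrarily gives analyticity in $(t,x)$ away from $t=0$; an additional translation parameter in $y$ acting within each bulk phase delivers the remaining transverse regularity, and the same parameter family applied to $h$ shows that $\mathcal M$ is a real analytic manifold. The main obstacle I expect is ensuring that the Hanzawa transformation, the nonlinear residual, and the scaling $T_{\lambda,\xi}$ all sit cleanly inside the maximal regularity theory for the linearized two-phase Stokes operator with surface-tension coupling—in particular, pinning down the correct anisotropic trace spaces for $h$ and verifying that the parameter-dependent coefficients remain in the right multiplier classes uniformly for $(\lambda,\xi)$ in a complex neighborhood of the identity. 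Once these technical points are settled, the contraction and the implicit function theorem arguments are essentially routine.
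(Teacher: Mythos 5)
Your overall architecture (transformation to a fixed domain, maximal $L_p$-regularity plus a contraction argument, then the Masuda--Angenent parameter trick for analyticity) coincides with the paper's. But there is a genuine gap in the existence step, and it sits exactly where this theorem differs from the small-data results for $\gamma_a=0$: your claim that the nonlinearities produced by the change of variables are ``locally Lipschitz with small constants on small balls around a reference extension of the initial data'' fails for the kinematic boundary condition. After the transformation that equation reads $\partial_t h-\gamma w=-(\gamma v|\nabla h)$, and the Fr\'echet derivative of the right-hand side in the direction $\bar h$ is $-(\gamma v|\nabla\bar h)$, whose norm in the target space $\FF_4(a)=W^{1-1/2p}_p(J;L_p)\cap L_p(J;W^{2-1/p}_p)$ is governed by $\|\gamma v\|_\infty+\|\gamma v\|_{\FF_4(a)}$. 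Under the hypotheses of the theorem $\gamma v_0$ is only bounded by $\beta$, not small, so neither shrinking the time interval, nor shrinking the ball, nor the smallness of $\|\nabla h_0\|_\infty$ makes this Lipschitz constant small. The fixed-point map is therefore not a contraction if this term is kept on the right-hand side, and the ``standard contraction mapping'' you invoke does not close.

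The paper's way out is to move the offending transport term to the left: one fixes $b$ with $b(0)=\gamma v_0$ (concretely $b(t)=e^{-D_nt}\gamma v_0$), rewrites the kinematic equation as $\partial_th-\gamma w+(b|\nabla)h=(b-\gamma v|\nabla h)$, and keeps on the right only $(b-\gamma v|\nabla h)$, which vanishes at $t=0$ and hence can be made small. The price is that one must prove maximal regularity for the linearized Stokes problem with the variable-coefficient convection term $(b(t,x)|\nabla)h$ in the kinematic condition; this is Theorem~\ref{th:b} and occupies most of Section~3: estimates for the boundary symbol $\lambda+\bigl(\sigma|\xi|-[\![\rho]\!]\gamma_a/|\xi|\bigr)k(z)+i(b_0|\xi)$ on a complex neighborhood $U_{\beta,\delta}$ of $[-\beta,\beta]$, the Kalton--Weis theorem combined with a Dunford calculus for the Riesz operator $(b_0|R)$, a perturbation step for variable $b$, and a space-time localization with commutator estimates. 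None of this is routine, and it is the part of the proof your proposal does not supply. (Your remark that the gravity term $[\![\rho]\!]\gamma_a h$ is of lower order and is absorbed on short time intervals is consistent with the paper, which realizes this by requiring $|\lambda|\ge\lambda_0$ in the symbol estimates; and your analyticity sketch matches the paper's, up to the additional point that the translation parameters must also be threaded through the coefficient $b$ via $b_{\lambda,\nu}(t,x)=b(\lambda t,x+t\nu)$.)
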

\begin{rems}
\label{rem:1.2}
(a)
More precise statements for the transformed problem will be given in
Section 4. Due to the restriction $p>n+3$ we obtain
\begin{equation*}
h\in C(J;BU\!C^2(\R^n))\cap C^1(J;BU\!C^1(\R^n)),
\end{equation*}
where $J=[0,t_0]$. In particular, the normal of $\Omega_1(t)$, the
normal velocity of $\Gamma(t)$, and the mean curvature of $\Gamma(t)$
are well-defined and continuous, so that \eqref{NS-2} makes sense
pointwise. For $u$ we obtain
\begin{equation*}
u\in BU\!C(J\times\R^{n+1},\R^{n+1}),
\quad \nabla u\in BU\!C({\mathcal O},\R^{(n+1)^2}).
\end{equation*}
Also interesting is the fact that the surface pressure jump
is analytic on ${\mathcal M}$ as well.
\smallskip\\
(b)
It is possible to relax the assumption $p>n+3$. In fact, $p>(n+3)/2$
turns out to be sufficient.
In order to keep the arguments simple, we impose here
the stronger condition $p>n+3$.
\smallskip\\
(c)
It is well-known that the situation
where gravity is acting on two superposed immiscible fluids - with the heavier fluid lying above a fluid of lesser density - leads to an instability, the
 Rayleigh-Taylor instability. In this case, small disturbances
of the equilibrium situation $(u,h)=(0,0)$ can cause instabilities, where
the heavy fluid moves down under the influence of gravity, and the light
material is displaced upwards, leading to vortices.
Our results show that problem \eqref{NS-2} is also well-posed in this case,
provided $\|\nabla h_0\|_\infty$ is small enough,
yielding smooth solutions for a short time.
In the forthcoming publication \cite{PrSi09c} we will give a rigorous proof
showing that the equilibrium solution $(u,h)=(0,0)$ is $L_p$-unstable.
To the best of our knowledge these are the first rigorous results
concerning the Navier-Stokes equations subject to the Rayleigh-Taylor instability.
\smallskip\\
(d)
If $\gamma_a=0$
then it is shown in \cite{PrSi09a} that problem~\eqref{NS-2} admits a solution
with the same regularity properties on an arbitrary fixed time interval $[0,t_0]$,
provided that $\|u_0\|_{W^{2-2/p}_p(\Omega_0)}$ and $\|h_0\|_{W^{3-2/p}_p(\R^n)}$
are sufficiently small (depending on $t_0$).
\medskip\\
(e) We point out that in Theorem 1.1 we only need a smallness condition
on the sup-norm of $\nabla h_0$ (relative to the vertical component of the velocity).
In case of a more general geometry, this condition can always be achieved by a
judicious choice of a reference manifold.
\end{rems}
The motion of a layer of  viscous,
incompressible fluid in an ocean of infinite extent,
bounded below by a solid surface and above by a free surface which includes
the effects of surface tension and gravity
(in which case $\Omega_0$ is a strip, bounded above
by $\Gamma_0$ and below by a fixed surface $\Gamma_b$)
has been considered by
Allain~\cite{Al87},
Beale~\cite{Bea84}, Beale and Nishida~\cite{BeaNi84},
Tani~\cite{Ta96},  by Tani and Tanaka~\cite{TT95},
and by Shibata and Shimizu~\cite{SS09}.
If the initial state and the initial velocity are close
to equilibrium, global existence of solutions is proved
in \cite{Bea84} for $\sigma>0$,
and in \cite{TT95} for $\sigma\ge 0$,
and the asymptotic decay rate for $t\to\infty$
is studied in~\cite{BeaNi84}.
We also refer to \cite{BPS05}, where in addition the
presence of a surfactant
on the free boundary and in one of the bulk phases is considered.
\smallskip\\
In case that $\Omega_1(t)$ is a bounded domain, $\gamma_a=0$, and
$\Omega_2(t)=\emptyset$, one obtains
the {\em one-phase} Navier-Stokes equations with surface tension,
describing the motion of an isolated volume of fluid.
For an overview of the existing literature in this case we refer
to the recent publications \cite{PrSi09a, SS07, SS09, So03b}.
\smallskip\\
Results concerning the {\em two-phase problem}~\eqref{NS-2}
with $\gamma_a=0$ in the $3D$-case
are obtained in \cite{Deni91, Deni94, DS95,Tanaka93}.
In more detail, Densiova~\cite{Deni94}
establishes existence and uniqueness of solutions
(of the transformed problem in Lagrangian coordinates)
with $v\in W^{s,s/2}_2$
for $s\in (5/2,3)$
in case that one of the domains is bounded.
Tanaka \cite{Tanaka93} considers the two-phase Navier-Stokes equations
with thermo-capillary convection in bounded domains, and he obtains
existence and uniqueness of solutions
with $(v,\theta)\in W^{s,s/2}_2$
for $s\in (7/2,4)$,
with $\theta$ denoting the temperature.
\smallskip\\
In order to prove our main result we
transform problem \eqref{NS-2} into a problem on a fixed domain.
The transformation is expressed in terms of the
unknown height function $h$ describing the free boundary.
Our analysis proceeds with establishing maximal regularity results
for an associated linear problem.
relying on the powerful theory of maximal regularity,
in particular on the  $H^\infty$-calculus for sectorial
operators, the Dore-Venni theorem, and the Kalton-Weis theorem,
see for instance~\cite{Am95, DHP03, DoVe87, KW01, KuWe04, PrSi06, PrSo90}.

Based on the linear estimates we can solve the nonlinear problem
by the contraction mapping principle.
Analyticity of solutions is obtained as in
\cite{PrSi09a} by the implicit function theorem in conjunction
with a scaling argument, relying on an idea that goes back
to Angenent \cite{Ang90a, Ang90b} and Masuda \cite{Ma80};
see also \cite{ES96, ES03, EPS03b}.


The plan for this paper is as follows. Section 2 contains the
transformation of the problem to a half-space and the determination
of the proper underlying linear problem.
In Section 3 we analyze this linearization and prove the
crucial maximal regularity result in an $L_p$-setting. Section~4
is then devoted to the nonlinear problem and contains the proof
of our main result.
Finally we collect and prove in an appendix some of the technical results
used in order to estimate the nonlinear terms.

\section{The transformed problem}
The nonlinear problem \eqref{NS-2} can be transformed to a problem
on a fixed domain by means of the transformations
\begin{equation*}
\begin{split}
&v(t,x,y):=(u_1,\ldots,u_n)(t,x,y+h(t,x)), \\
&w(t,x,y):=u_{n+1}(t,x,y+h(t,x)), \\
&\pi(t,x,y):=q(t,x,y+h(t,x)),
\end{split}
\end{equation*}
where $t\in J=[0,a]$, $x\in \R^n$, $y\in\R$, $y\neq0$.
With a slight abuse of notation we will
in the sequel denote the transformed velocity again
by $u$, that is, we set $u=(v,w)$.
With this notation we obtain the transformed problem
\begin{equation}
\label{tfbns2}
\left\{
\begin{aligned}
\rho\partial_tu -\mu\Delta u+\nabla \pi&= F(u,\pi,h)
    &\ \hbox{in}\quad &\dot\R^{n+1}\\
{\rm div}\,u&= F_d(u,h)&\ \hbox{in}\quad &\dot\R^{n+1}\\
-[\![\mu\partial_y v]\!] -[\![\mu\nabla_{x}w]\!] &=G_v(u,[\![\pi]\!],h)
    &\ \hbox{on}\quad &\R^n\\
-2[\![\mu\partial_y w]\!] +[\![\pi]\!] -\sigma\Delta h-[\![\rho]\!]\gamma_a h &= G_w(u,h)
    &\ \hbox{on}\quad &\R^n\\
[\![u]\!] &=0 &\ \hbox{on}\quad &\R^n\\
\partial_th-\gamma w&=-(\gamma v|\nabla h) &\ \hbox{on}\quad &\R^n\\
u(0)=u_0,\; h(0)&=h_0, \\\end{aligned}
\right.
\end{equation}
for $t>0$, where $\dot\R^{n+1}=\{(x,y)\in\R^n\times\R: y\neq 0\}$.
\medskip\\
The nonlinear functions have been computed in \cite{PrSi09a} and are given by:
\begin{equation}
\label{2.2}
\begin{split}
F_v(v,w,\pi,h)&=\mu\{- 2(\nabla h|
\nabla_{x})\partial_yv +|\nabla h|^2
\partial^2_yv-\Delta h\partial_yv\}+\partial_y \pi\nabla h \\
&\quad
+\rho\{-(v|\nabla_{x})v
+(\nabla h|v)\partial_yv-
w\partial_yv\}
+ \rho\partial_th \partial_y v, \\
F_w(v,w,h)&=\mu\{- 2(\nabla h|
\nabla_{x})\partial_yw +|\nabla h|^2
\partial^2_yw -\Delta h\partial_yw\} \\
&\quad  +\rho\{-(v|\nabla_{x})w
+(\nabla h|v)\partial_yw- w\partial_yw\}
+ \rho\partial_th \partial_y w, \\
F_d(v,h)&= (\nabla h|\partial_y v)
\end{split}
\end{equation}
and
\begin{equation}
\label{2.3}
\begin{split}
G_v(v,w,[\![\pi]\!],h)&\!=\!
-[\![\mu(\nabla_{x}v+(\nabla_{x}v)^{\sf T})]\!]\nabla h
+|\nabla h|^2[\![\mu\partial_yv]\!]
+ (\nabla h|\,[\![\mu\partial_yv]\!])\nabla h\\
&\quad -[\![\mu\partial_y w]\!]\nabla h
 + \{[\![\pi]\!]-\sigma(\Delta h-G_\kappa(h))\}\nabla h, \\
\ \ G_w(v,w,h)&\!=\!
- (\nabla h|\, [\![\mu\nabla_{x}w]\!])
- (\nabla h|\, [\![\mu\partial_yv]\!])
+ |\nabla h|^2  [\![\mu\partial_yw]\!]
-\sigma G_\kappa(h)
\end{split}
\end{equation}
with
\begin{equation}
\label{2.4}
G_\kappa(h)=\frac{|\nabla h|^2\Delta h}
                {(1+\sqrt{1+|\nabla h|^2})
                \sqrt{1+|\nabla h|^2}}
   +\frac{(\nabla h| \nabla^2 h\nabla h)}
                {(1+|\nabla h|^2)^{3/2}},
\end{equation}
where $\nabla^2 h$ denotes the Hessian matrix of all second
order derivatives of $h$.
\medskip

Before studying solvability results for problem \eqref{tfbns2}
let us first introduce suitable function spaces.
Let $\Omega\subseteq\R^m$ be open and $X$ be
an arbitrary Banach space.
By $L_p(\Omega;X)$ and $H^s_p(\Omega;X)$,
for $1\le p\le\infty$, $s\in\R$, we denote the $X$-valued Lebegue and
the Bessel potential spaces of order $s$, respectively.
We will also frequently make use of the fractional
Sobolev-Slobodeckij
spaces $W^s_p(\Omega;X)$, $1\le p< \infty$,
$s\in\R\setminus\ZZ$, with norm
\begin{equation}
\label{Slobodeskii}
         \|g\|_{W^s_p(\Omega;X)}
         =\|g\|_{W^{[s]}_p(\Omega;X)}
         +\sum_{|\alpha|=[s]}
          \left(\int_\Omega\int_\Omega
          \frac{\|\partial^\alpha g(x)-\partial^\alpha g(y)\|^p_{X}}
          {|x-y|^{m+(s-[s])p}}\, dx\, dy\right)^{\!1/p}{\hskip-4mm},
\end{equation}
where $[s]$ denotes the largest integer smaller than $s$.
Let $a\in(0,\infty]$ and $J=[0,a]$.
We set
\[
         _0W^s_p(J;X):=\left\{
         \begin{array}{l}
         \{g\in W^s_p(J;X): g(0)=g'(0)=\ldots=g^{(k)}(0)=0\},\\[3mm]
         \mbox{if}\quad
         k+\frac1{p}<s<k+1+\frac1{p}, \ k\in\N\cup\{0\},\\[3mm]
         W^s_p(J;X),\quad \mbox{if}\quad s<\frac1{p}.
         \end{array}
         \right.
\]
The spaces $_0H^s_p(J;X)$ are defined analogously.
Here we remind that $H^k_p=W^k_p$ for $k\in\ZZ$ and $1<p<\infty$,
and that $W^s_p=B^s_{pp}$ for $s\in\R\setminus\ZZ$.
\\
For $\Omega\subset\R^m $ open and $1\le p<\infty$,
the { homogeneous Sobolev spaces} $\dot H^1_p(\Omega)$ of order $1$
are defined as
\begin{equation}
\label{dot-H-1}
\begin{split}
&\dot H^1_p(\Omega):=(\{g\in L_{1,\text{loc}}(\Omega):
\|\nabla g\|_{L_p(\Omega)}<\infty\},\|\cdot\|_{\dot H^1_p(\Omega)}) \\
&\|g\|_{\dot H^1_p(\Omega)}:=
\big(\sum\limits_{j=1}^m \|\partial_j g\|^p_{L_p(\Omega)}\big)^{\!1/p}.
\end{split}
\end{equation}
Then $\dot H^1_p(\Omega)$
is a Banach space, provided we factor out the constant functions
and equip the resulting space with the corresponding quotient norm,
see for instance~\cite[Lemma II.5.1]{Ga94}.
We will in the sequel always consider the quotient space topology
without change of notation.
In case that $\Omega$ is locally Lipschitz, it is known that
$\dot H^1_p(\Omega)\subset H^1_{p,\text{loc}}(\overline\Omega)$,
see \cite[Remark II.5.1]{Ga94}, and consequently,
any function in  $\dot H^1_p(\Omega)$ has a well-defined trace on
$\partial\Omega$.

For $s\in\R$ and $1<p<\infty$ we
also consider the {homogeneous Bessel-potential spaces} $\dot H^s_p(\R^n)$
of order $s$, defined by
\begin{equation}
\label{dot-H-s}
\begin{split}
&\dot H^s_p(\R^n):=(\{g\in {\mathcal S}^\prime(\R^n): \dot I^sg\in L_p(\R^n)\},
\|\cdot\|_{\dot H^s_p(\R^n)}),\\
& \|g\|_{\dot H^s_p(\R^n)}:=\|\dot I^sg\|_{L_p(\R^n)},
\end{split}
\end{equation}
where ${\mathcal S^\prime}(\R^n)$
denotes the space of all tempered distributions,
and $\dot I^s$ is the Riesz potential given by
$$
\dot I^s g:=(-\Delta)^{s/2}g:=
\mathcal F^{-1}(|\xi|^s \mathcal Fg),\quad g\in\mathcal S^\prime(\R^n).
$$
By factoring out all polynomials,
$\dot H^s_p(\R^n)$
becomes a Banach space with the natural quotient norm.
For $s\in\R\setminus\ZZ$,
the homogeneous Sobolev-Slobodeckij spaces $\dot W^s_p(\R^n)$
of fractional order
can be obtained  by real interpolation as
\begin{equation*}
\dot W^s_p(\R^n):=(\dot H^k_p(\R^n),\dot H^{k+1}_p(\R^n))_{s-k,p},
\quad k<s<k+1,
\end{equation*}
where $(\cdot,\cdot)_{\theta,p}$ is the real interpolation method.
It follows that
\begin{equation}
\label{I-s-isom}
\dot I^s\in
\text{Isom}(\dot H^{t+s}_p(\R^n),\dot H^t_p(\R^n))\cap
\text{Isom}(\dot W^{t+s}_p(\R^n),\dot W^t_p(\R^n)),
\quad s,t\in\R,
\end{equation}
with $\dot W^k_p=\dot H^k_p$ for $k\in\ZZ$.
We refer to \cite[Section 6.3]{BL76} and \cite[Section 5]{Tr83}
for more information on homogeneous functions spaces.
In particular, it follows from
parts (ii) and (iii) in \cite[Theorem 5.2.3.1]{Tr83}
that the definitions \eqref{dot-H-1} and \eqref{dot-H-s}
are consistent if $\Omega=\R^n$, $s=1$, and $1<p<\infty$.
We note in passing that
\begin{equation}
\label{norms-homogeneous}
          \left(\int_{\R^n}\int_{\R^n}
          \frac{|g(x)-g(y)|^p}
          {|x-y|^{n+sp}}\, dx\, dy\right)^{\!1/p}\!\!,
\ \
\left(\int_0^\infty t^{(1-s)p}\|\frac{d}{dt}P(t)g\|^p_{L_p(\R^n)}\frac{dt}{t}
\right)^{\!1/p}\!\!
\end{equation}
define equivalent norms on $\dot W^s_p(\R^n)$ for $0<s<1$,
where $P(\cdot)$ denotes the Poisson semigroup,
see \cite[Theorem 5.2.3.2 and Remark 5.2.3.4]{Tr83}.
Moreover,
\begin{equation}
\label{trace-homogeneous}
\gamma_{\pm}\in \Li(\dot W^1_p(\R^{n+1}_\pm), \dot W^{1-1/p}_p(\R^n)),
\end{equation}
where $\gamma_{\pm}$ denotes the trace operators,
see for instance \cite[Theorem II.8.2]{Ga94}.

\section{The Linearized Two-Phase Stokes Problem with Free boundary}
It turns out that, unfortunately, the
 nonlinear term $(\gamma v|\nabla h)$ occurring in
 \eqref{tfbns2} cannot be made small in the norm of $\FF_4(a)$,
 defined below in \eqref{FF},
by merely taking $\|\nabla h\|_\infty$ small.
This can, however, be achieved for the modified term
$(b-\gamma v|\nabla h)$, provided $b$ is properly chosen
so that $b(0)=\gamma v_0.$
As a consequence, we now need to consider the
modified linear problem
\begin{equation}
\label{linFBpert}
\left\{
\begin{aligned}
\rho\partial_tu -\mu\Delta u+\nabla \pi&=f
    &\ \hbox{in}\quad &\dot\R^{n+1}\\
{\rm div}\,u&= f_d&\ \hbox{in}\quad &\dot\R^{n+1}\\
-[\![\mu\partial_y v]\!] -[\![\mu\nabla_{x}w]\!]&=g_v
    &\ \hbox{on}\quad &\R^n\\
-2[\![\mu\partial_y w]\!] +[\![\pi]\!] &=g_w +\sigma\Delta h +[\![\rho]\!]\gamma_a h
    &\ \hbox{on}\quad &\R^n\\
[\![u]\!] &=0 &\ \hbox{on}\quad &\R^n\\
\partial_th-\gamma w +(b(t,x)|\nabla)h&=g_h &\ \hbox{on}\quad &\R^n\\
u(0)=u_0,\; h(0)&=h_0. &\\
 \end{aligned}
\right.
\end{equation}
Here we mention that the simpler case where
$b=0$ and $\gamma_a=0$ was studied
in \cite[Theorem 5.1]{PrSi09a}.
We obtain the following maximal regularity result.
\goodbreak
\begin{thm}
\label{th:b}
Let $p>n+3$ be fixed, and assume that $\rho_j$ and $\mu_j$ are positive
constants for $j=1,2$, and set $J=[0,a]$. Suppose
$$
\quad b_0\in\R^n,\quad
b_1\in W^{1-1/2p}_p(J;L_p(\R^n,\R^n))\cap L_p(J;W^{2-1/p}_p(\R^n,\R^n)),
$$
and set $b(\cdot)=b_0+b_1(\cdot)$. Then the  Stokes problem with free boundary \eqref{linFBpert} admits a unique solution $(u,\pi,h)$ with regularity
\begin{equation}
\label{reg}
\begin{split}
&u\in H^1_p(J;L_p(\R^{n+1},\R^{n+1}))
  \cap L_p(J;H^2_p(\dot{\R}^{n+1},\R^{n+1})), \\
&\pi\in L_p(J;\dot{H}^1_p(\dot{\R}^{n+1})), \\
&[\![\pi]\!]\in W^{1/2-1/2p}_p(J;L_p(\R^{n}))\cap L_p(J;W^{1-1/p}_p(\R^{n})),\\
& h\in W^{2-1/2p}_p(J;L_p(\R^{n}))\cap H^1_p(J;W^{2-1/p}_p(\R^n))
\cap L_p(J;W^{3-1/p}_p(\R^n))
\end{split}
\end{equation}
if and only if the data
$(f,f_d,g,g_h,u_0,h_0)$
satisfy the following regularity and compatibility conditions:
\begin{itemize}
\item[(a)]
$f\in L_p(J;L_p(\R^{n+1},\R^{n+1}))$,
\vspace{1mm}
\item[(b)]
$f_d\in H^1_p(J; \dot{H}^{-1}_p(\R^{n+1}))\cap L_p(J; H^1_p(\dot{\R}^{n+1}))$,
\vspace{1mm}
\item[(c)]
$g=(g_v,g_w)\in W^{1/2-1/2p}_p(J;L_p(\R^{n},\R^{n+1}))
\cap L_p(J;W^{1-1/p}_p(\R^{n},\R^{n+1}))$,
\vspace{1mm}
\item[(d)]
$g_h\in W^{1-1/2p}_p(J;L_p(\R^{n}))\cap L_p(J;W^{2-1/p}_p(\R^{n}))$,
\vspace{1mm}
\item[(e)]
$u_0\in W^{2-2/p}_p(\dot{\R}^{n+1},\R^{n+1})$, $h_0\in W^{3-2/p}_p(\R^n)$,
\vspace{1mm}
\item[(f)]
${\rm div}\, u_0=f_d(0)$ in $\,\dot\R^{n+1}$ and $[\![u_0]\!]=0$
on $\,\R^n$ if $p>3/2$,
\vspace{1mm}
\item[(g)]
$-[\![\mu\partial_y v_0]\!] -[\![\mu\nabla_{x}w_0]\!] ={g_v}(0)$ on
$\,\R^n$ if $p>3$.
\end{itemize}
The solution map $[(f,f_d,g,g_h, u_0,h_0)\mapsto (u,\pi,h)]$ is continuous between the corresponding spaces.

If $b_1\equiv0$ then the result
is true for all
$p\in(1,\infty)$, $p\neq3/2,3$.
\end{thm}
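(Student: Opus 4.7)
The plan is to derive Theorem~\ref{th:b} by viewing the two new contributions — the gravity term $[\![\rho]\!]\gamma_a h$ in the $w$-stress condition and the drift term $(b(t,x)|\nabla)h$ in the kinematic equation — as lower-order perturbations of the reference problem in \cite[Theorem~5.1]{PrSi09a}, which already furnishes maximal $L_p$-regularity for the case $b\equiv0$ and $\gamma_a=0$. Necessity of the regularity and compatibility conditions (a)--(g) is unaffected by these perturbations and follows from trace theory in the anisotropic time-space Sobolev spaces involved, exactly as in \cite{PrSi09a}; I would simply quote this.

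For sufficiency I would first address the case $b_1\equiv 0$, for which the theorem claims the full range $p\in(1,\infty)\setminus\{3/2,3\}$. From the regularity class \eqref{reg} and the standard mixed-derivative embeddings one checks that $h$ has enough regularity for $[\![\rho]\!]\gamma_a h$ to belong to the space prescribed for $g_w$, and that $\nabla h\in W^{1-1/2p}_p(J;L_p(\R^n))\cap L_p(J;W^{2-1/p}_p(\R^n))$ lies in the space prescribed for $g_h$. Hence the linear map
\begin{equation*}
\Phi_0\colon (u,\pi,h)\longmapsto \bigl([\![\rho]\!]\gamma_a h,\ (b_0|\nabla h)\bigr)
\end{equation*}
is bounded from the solution space in \eqref{reg} into the data space. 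Denoting by $\mathcal{L}$ the solution operator supplied by \cite[Theorem~5.1]{PrSi09a}, and reducing to vanishing initial data by the usual extension/lifting trick, the composition $\mathcal{L}\circ\Phi_0$ on the zero-trace subspace has operator norm that tends to $0$ as the interval length $\tau\to 0^+$; this is the standard contractive estimate available for parabolic problems with trivial initial traces. A Neumann series on $[0,\tau]$ then produces a unique solution, which is extended to $[0,a]$ by concatenation.

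For the general case $b_1\not\equiv0$ the same scheme carries over, provided one can multiply $\nabla h$ by $b_1$ within the anisotropic class
\begin{equation*}
X(J):=W^{1-1/2p}_p(J;L_p(\R^n))\cap L_p(J;W^{2-1/p}_p(\R^n)).
\end{equation*}
This is precisely where the restriction $p>n+3$ enters: it ensures that $X(J)$ embeds continuously into $BUC(J\times\R^n)$ and admits a Leibniz-type product estimate $\|fg\|_{X(J)}\le C\|f\|_{X(J)}\|g\|_{X(J)}$, which I would record, together with the related multiplier bounds, in the appendix referenced in the introduction. With this estimate at hand, the perturbation $\Phi_1\colon h\mapsto (b_1|\nabla h)$ maps into the space of $g_h$, and the small-interval Neumann/contraction argument closes, now applied to $\Phi_0+\Phi_1$.

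The main obstacle is exactly this product estimate in the anisotropic $X(J)$-scale: at both endpoints of the intersection defining $X(J)$ the factors $b_1$ and $\nabla h$ occupy the entire regularity budget of the target, so closing the multiplication without losing half a derivative is what forces the Sobolev threshold $p>n+3$. Once the product estimate is secured, uniqueness and continuity of the data-to-solution map follow automatically from the a priori estimate supplied by $\mathcal{L}$ together with the open mapping theorem.
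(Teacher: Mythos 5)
There is a genuine gap, and it sits exactly at the heart of your Neumann-series step. You treat the transport term $(b_0|\nabla)h$ as a lower-order perturbation whose composition with the solution operator of the reference problem has small norm on the zero-trace subspace as the interval length shrinks. This is false: $\nabla$ loses exactly one spatial derivative, and the spatial component $L_p(J;W^{2-1/p}_p(\R^n))$ of $\FF_4$ is precisely the spatial component of $\EE_4$ minus one derivative, so $h\mapsto (b_0|\nabla h)$ is of the \emph{same} order as $\partial_t h$ in the kinematic equation and the norm of $\nabla:{_0}\EE_4(\tau)\to{_0}\FF_4(\tau)$ is bounded below uniformly in $\tau$ (this is reflected in Proposition~\ref{pro:6.1}(e) and Lemma~\ref{le:3}(c), where no factor $\tau^\alpha$ appears, in contrast to the genuinely lower-order estimate of Remark~\ref{remark-E-3}). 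On the symbol level the term $i(b_0|\xi)$ contributes at order $O(|\xi|)$, the same order as the surface-tension contribution $\sigma|\xi|k(z)$ in the regime $|\lambda|\lesssim|\xi|^2$, so it modifies the principal boundary symbol rather than perturbing it. This is why the paper must redo the symbol analysis: establish the two-sided bound \eqref{tilde-s-estimate} for the extended symbol $\tilde s(\lambda,\tau,\zeta)$ on a strip $U_{\beta,\delta}$ (which is where the structural fact that $b_0$ is \emph{real} enters, keeping $\sigma k(z)+i\zeta$ in a sector of angle $<\pi/2$), and then invert the operator via the $\cR$-bounded $\cH^\infty$-calculus, the Kalton--Weis theorem, and a Dunford calculus in the Riesz operator $R_{b_0}=(b_0|R)$. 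A contraction argument blind to this structure would equally well "prove" invertibility for drift coefficients for which the symbol has zeros, so some such structural input is unavoidable. (Your handling of the gravity term $[\![\rho]\!]\gamma_a h$, by contrast, is fine in spirit: that term genuinely gains regularity, and the paper absorbs it by taking $|\lambda|\ge\lambda_0$ large, i.e.\ an exponential shift.)

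A second, related omission: even granting the constant-coefficient result, passing to variable $b=b_0+b_1$ is not a short-time perturbation either, because the product estimate of Lemma~\ref{le:3}(c) involves $\|b_1\|_\infty$, which does not shrink with the interval length. The paper therefore needs smallness of $\|b_1\|_\infty+\|b_1\|_{\FF_4}$ for the perturbation step (iv), and for general $b$ the full localization in space \emph{and} time of steps (v)--(vii), with the partition of unity $\{\phi_j\}$, the frozen-coefficient operators $S_j$ built from $b(k\delta,x_j)$, and the delicate commutator estimates for $[L,\phi_j]$ and $[D_n^{1/2},\phi_j]$. None of this is replaceable by the concatenation-over-subintervals device you propose, which only exploits continuity in time, not in space. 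Your identification of the product estimate in $\FF_4$ as the place where $p>n+3$ enters is correct, but it is a supporting lemma, not the main obstacle.
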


\begin{proof}
(i) Since $\FF_4(a)$, defined by
$$\FF_4(a):=W^{1-1/2p}_p(J;L_p(\R^n))\cap L_p(J;W^{2-1/p}_p(\R^n)),$$
is a multiplication algebra for $p>n+3$,
the operator $[h\mapsto (b|\nabla)h]$ maps the space
$$\EE_4(a):=W^{2-1/2p}_p(J;L_p(\R^{n}))\cap H^1_p(J;W^{2-1/p}_p(\R^n))
\cap L_p(J;W^{3-1/p}_p(\R^n))$$
continuously into $\FF_4(a)$
with bound $|b_0|+ C_a \|b_1\|_{\FF_4(a)}$,
see Lemma \ref{le:3}(a).
\smallskip\\
As in the proof of \cite[Theorem 5.1]{PrSi09a}
it suffices to consider the reduced problem
\begin{equation}
\label{linFB0}
\left\{
\begin{aligned}
\rho\partial_tu -\mu\Delta u+\nabla \pi&=0
    &\ \hbox{in}\quad &\dot\R^{n+1}\\
{\rm div}\,u&= 0&\ \hbox{in}\quad &\dot\R^{n+1}\\
-[\![\mu\partial_y v]\!] -[\![\mu\nabla_{x}w]\!]&=0
    &\ \hbox{on}\quad &\R^n\\
-2[\![\mu\partial_y w]\!] +[\![\pi]\!]  &=\sigma\Delta h +[\![\rho]\!]\gamma_a h
    &\ \hbox{on}\quad &\R^n\\
[\![u]\!] &=0 &\ \hbox{on}\quad &\R^n\\
\partial_th-\gamma w+(b(t,x)|\nabla)h&={\tilde g}_h &\ \hbox{on}\quad &\R^n\\
u(0)=0,\; h(0)&=0, \\
\end{aligned}
\right.
\end{equation}
where the function $\tilde g_h\in {_0}\FF_4(a)$ is defined in a similar way
as in formula (5.5) in \cite{PrSi09a}.
This can be accomplished by choosing
$h_1:=h_{1,b}\in\EE_4(a)$
such that
\begin{equation*}
h_1(0)=h_0,\quad
\partial_t h_{1}(0)=g_h(0)+\gamma w_0-(b(0)|\nabla h_0),
\end{equation*}
and then setting
${\tilde g}_h:={\tilde g}_{h,b}:=g_h+\gamma w_1-(b|\nabla h_{1})-\partial_t h_{1},
$
where $w_1$ has the same meaning as in step (i) of the proof
of \cite[Theorem 5.1]{PrSi09a}.
\smallskip\\
(ii) We first consider the reduced problem~\eqref{linFB0}
for the case where $b\equiv b_0$ is constant.
The corresponding boundary symbol
$s_{b_0}(\lambda,\xi)$ is given by
\begin{equation}
s_{b_0}(\lambda,\xi)=\lambda+\big(\sigma |\xi|-[\![\rho]\!]\gamma_a/|\xi|\big)k(z)
+i(b_0|\xi),
 \end{equation}
where we use the same notation as in the proof of \cite[Theorem 5.1]{PrSi09a}.
Here we remind that $k$ has the following properties:
$k$ is holomorphic in $\C\setminus\R_{-}$ and
\begin{equation}
\label{k-properties}
k(0)=\frac{1}{2(\mu_1+\mu_2)},\quad zk(z)\to \frac{1}{\rho_1+\rho_2}
\quad \text{for}\quad |z|\to\infty,
\end{equation}
uniformly in $z\in\bar\Sigma_\vartheta$ for $\vartheta\in [0,\pi)$ fixed.
In particular there is a constant $N=N(\vartheta)$ such that
\begin{equation}
\label{k-estimate}
|k(z)|+|zk(z)|\le N,\quad z\in\Sigma_{\vartheta}.
\end{equation}
In the following we fix $\beta>0$.
For further analysis it will be convenient to introduce the
related extended symbol
\begin{equation}
\label{extended-symbol}
\tilde s(\lambda,\tau,\zeta):=
\lambda + \sigma \tau k(z)
+i\tau \zeta-[\![\rho]\!]\gamma_a k(z)/\tau,
\end{equation}
where $(\lambda,\tau)\in \Sigma_{\pi/2+\eta}\times\Sigma_\eta$
with $\eta$ sufficiently small, $z:=\lambda/\tau^2$,
and $\zeta\in U_{\beta,\delta}$
with 
$U_{\beta,\delta}:=\{\zeta\in\C: 
|\text{Re}\,\zeta|< \beta+1,\; |\text{Im}\,\zeta| < \delta\}$
and $\delta\in (0,1]$.
Clearly
$
\tilde s(\lambda,|\xi|,(b_0|\xi/|\xi|))=s_{b_0}(\lambda,\xi)
$
for $(\lambda,\xi)\in \Sigma_\eta\times \dot\R^n$.
\smallskip\\
We are going to show that for every fixed $\beta>0$
there are positive constants $\lambda_0$,
$\delta$, $\eta=\eta(\beta)$,
and $c_j=c_j(\beta,\lambda_0,\delta,\eta)$ such that
\begin{equation}
\label{tilde-s-estimate}
c_0\big[|\lambda |+|\tau|\big]\le |\tilde s(\lambda,\tau,\zeta)|
\le c_1\big[|\lambda |+ |\tau|\big],
\end{equation}
for all 
$(\lambda,\tau,\zeta)\in \Sigma_{\pi/2+\eta}\times\Sigma_{\eta}\times U_{\beta,\delta}$
with $|\lambda|\ge \lambda_0$.
The upper estimate is easy to obtain:
fixing $\vartheta\in (\pi/2,\pi)$ and $\lambda_0>0$, it follows
from~\eqref{k-estimate}
and the identity $k(z)/\tau=zk(z)\tau/\lambda$
that
\begin{equation}
\label{upper}
|\tilde s(\lambda,\tau,\zeta)|
\le |\lambda|+\big(\sigma N
+(\beta +2) +|[\![\rho]\!]|\gamma_a N/\lambda_0\big)|\tau|
\le c_1 \big[|\lambda |+|\tau|\big]
\end{equation}
for all 
$(\lambda,\tau,\zeta)\in \Sigma_{\pi/2+\eta}\times\Sigma_{\eta}
\times U_{\beta,\delta}$,
where $|\lambda |\ge \lambda_0$ and $\eta\in (0,\eta_0)$ with
$\eta_0:=(\vartheta-\pi/2)/3$.

In order to obtain a lower estimate we proceed as follows.
Suppose first that $\beta,\lambda_0>0$ are fixed and $\eta_0$ is as above.
Then we obtain
\begin{equation}
\label{lower-m}
\begin{split}
|\tilde s(\lambda,\tau,\zeta)|
&\ge |\lambda|
-\big(\sigma N+(\beta+2) +|[\![\rho]\!]|\gamma_a N/\lambda_0\big)|\tau|\\
&\ge (1/2)|\lambda|+(m/4)|\tau|
= c_0(\beta,\lambda_0) [|\lambda |+|\tau|],
\end{split}
\end{equation}
provided 
$(\lambda,\tau,\zeta)\in \Sigma_{\pi/2+\eta}\times\Sigma_{\eta}
\times U_{\beta,\delta}$,
$\eta\in (0,\eta_0)$, and
$|\lambda|\ge \lambda_0$
as well as $|\lambda|\ge m|\tau|$ with
\begin{equation*}
(m/4)\ge \sigma N +(\beta +2)+|[\![\rho]\!]|\gamma_a N/\lambda_0.
\end{equation*}
Next we will derive an estimate from below in case that
$|\lambda|\le M|\tau|^2$ with $M$ a positive constant.
From \eqref{k-properties} follows
that there are constants
$H,L, R>0$, depending on $M$,
such that
\begin{equation}
\label{k-M}
L\le \text{Re}\,(\sigma k(z))\le R,
\quad |\text{Im}\,(\sigma k(z))|\le H,
\end{equation}
whenever $(\lambda,\tau)\in \Sigma_{\pi/2+\eta}\times \Sigma_\eta$,
for $\eta\in (0,\eta_0)$
and $|\lambda|\le M|\tau|^2$,
where $z=\lambda/\tau^2$.
By choosing $\delta$ small enough we obtain from \eqref{k-M}
and the definition of $U_{\beta,\delta}$
\begin{equation*}
\begin{split}
 0<L-\delta\le \text{Re}\,(\sigma k(z)+i\zeta)\le R+\delta,\quad
|\text{Im}\,(\sigma k(z)+i\zeta)|\le H+(\beta+ 1)
\end{split}
\end{equation*}
provided
$(\lambda,\tau,\zeta)\in \Sigma_{\pi/2+\eta}\times\Sigma_\eta
\times U_{\beta,\delta}$,
$\eta\in (0,\eta_0)$ and 
$|\lambda|\le M|\tau|^2$, where $z=\lambda/\tau^2$.
By choosing $\eta$ small enough we conclude that there is
$\alpha=\alpha(M,\beta,\delta,\eta)\in (0,\pi/2)$ such that
\begin{equation}
\label{angle}
\tau (\sigma k(z)+i\zeta)\in \Sigma_\alpha
\end{equation}
whenever
$(\lambda,\tau,\zeta)\in\Sigma_{\pi/2+\eta}\times\Sigma_\eta\times U_{\beta,\delta}$
and $|z|\le M$ with $z=\lambda/\tau^2$.
We can additionally assume that $\eta$ is chosen so that 
$\psi:=\pi/2-\alpha-\eta>0$.
This implies
\begin{equation}
\begin{split}
|\tilde s(\lambda,\tau,\zeta)|
&\ge c(\psi)
\big[|\lambda| + |\tau|\,|\sigma k(z)+i\zeta|\big]
-|\tau||[\![\rho]\!]|\gamma_a N/\lambda_1 \\
&\ge c(\psi)\min(1,L-\delta)\big[|\lambda|+|\tau|\big]
-|\tau||[\![\rho]\!]|\gamma_a N/\lambda_1 \\
&\ge c_0(M,\beta,\lambda_1)\big[|\lambda|+|\tau|\big],
\end{split}
\end{equation}
provided 
$(\lambda,\tau,\zeta)\in\Sigma_{\pi/2+\eta}\times \Sigma_\eta\times U_{\beta,\delta}$,
$|\lambda|\le M|\tau|^2$ and $|\lambda|\ge \lambda_1$,
where $\lambda_1$ is chosen big enough.

Noting that the curves $|\lambda|=m|\tau|$ and $|\lambda|=M|\tau|^2$
intersect at $(m/M,m^2/M)$ we obtain
\eqref{tilde-s-estimate} by choosing 
$\lambda_0:=\max(\lambda_1, m^2/M)$.
\smallskip\\
(iii) 
In the following, we fix $\beta>0$ and we assume that
$b_0\in\R^n$ with $|b_0|\le\beta$.
Let then $S_{b_0}$ be the operator corresponding to the symbol $s_{b_0}$.
It is clear that $S_{b_0}$ is bounded from 
${_0\EE}_4(a)$ to ${_0\FF}_4(a)=:X$ and it remains to prove that it is boundedly invertible. For this we use the $\cH^\infty$-calculus and similar arguments
as in \cite[Section~4]{PrSi08} and \cite[Section~5]{PrSi09a}. 
First we note that $D_n$ admits an $\cR$-bounded $\cH^\infty$-calculus in $X$ with angle $0$; this follows from \cite[Theorem 4.11]{DHP03}. 
Therefore by the estimates obtained in~\eqref{tilde-s-estimate}, 
the operator family 
\begin{equation*}
\{(\lambda+D_n^{1/2})\tilde{s}^{-1}(\lambda, D_n^{1/2},\zeta):
(\lambda,\zeta)\in \Sigma_{\pi/2+\eta}\times U_{\beta,\delta},\;
|\lambda|\ge \lambda_0\}
\end{equation*}
is $\cR$-bounded.
Since $G=\partial_t$ is in $\cH^\infty(X)$ with angle $\pi/2$, 
the theorem of Kalton and Weis 
\cite[Theorem~4.4]{KW01} 
implies that the operator family 
$$
\{(G+D_n^{1/2})\tilde{s}^{-1}(G, D_n^{1/2},\zeta): \zeta\in U_{\beta,\delta}\}
$$
is bounded and holomorphic on $U_{\beta,\delta}$. Finally, we employ the Dunford calculus for the bounded linear operator $R_{b_0}:=(b_0|R)$, where $R$ denotes the Riesz operator with symbol $\xi/|\xi|$, $\xi\in\dot\R^n$.  
The operator $R_{b_0}$ is bounded and its spectrum is 
$\sigma(R_{b_0})=[-|b_0|,|b_0|]$, as e.g.\ the Mikhlin theorem shows. Since the operator family 
$$
\{(G+D_n^{1/2})\tilde{s}^{-1}(G, D_n^{1/2},\zeta):\zeta\in U_{\beta,\delta}\}
$$
is bounded and holomorphic in a neighborhood of $\sigma(R_{b_0})$, the classical Dunford calculus shows that the operator
$$
(G+D_n^{1/2})\tilde{s}^{-1}(G, D_n^{1/2},R_{b_0})
$$ 
is bounded in $X$,
uniformly for all $b_0\in\R^n$ with $|b_0|\le\beta$.
This shows that $S_{b_0}:{_0\EE}_4(a)\to{_0\FF}_4(a)$ is boundedly invertible,
uniformly for all $b_0\in\R^n$ with $|b_0|\le \beta$.

We emphasize that the bound for the 
operator $S_{b_0}^{-1}:{_0\FF}_4(a)\to{_0\EE}_4(a)$
depends only on the parameters $\rho_j$,
$\mu_j$, $\sigma$, $\gamma_a$, $p$ and $\beta$,
for $|b_0|\le \beta.$
\smallskip\\
\noindent
(iv) By means of a perturbation argument the result for constant $b$ can be extended to
variable $b=b_0+b_1(t,x)$. In fact,
given $\beta>0$ there exists a number  $\eta>0$ such that the
solution operator $S_b^{-1}$
exists and is bounded uniformly,  provided $|b_0|\le \beta$ and
$\|b_1\|_\infty+\|b_1\|_{{\FF}_4(a)}\le 2\eta.$ This follows
easily from the estimate
$$\|(b_1|\nabla h)\|_{{_0\FF}_4(a)}
\le c_0(\|b_1\|_\infty + \|b_1\|_{\FF_4(a)})\|h\|_{{_0\EE}_4(a)},$$
see Lemma \ref{le:3}(c).
\smallskip\\
\noindent
(v) In the general case we use a localization technique,
similar to \cite[Section~9]{AHS94}.
For this purpose we
first decompose $J$ into subintervals $J_k=[k\delta,(k+1)\delta]$
of length $\delta>0$ and solve the problem successively on these subintervals.
Since
$b\in BU\!C(J;C_0(\R^n,\R^n))$,
given any $\eta>0$ we may choose $\delta>0$ and $\varepsilon>0$
such that
\begin{equation*}
|b(t,x)-b(s,y)|\le \eta\quad\text{for all}\quad
(t,x), (s,y)\in J\times\R^n
\end{equation*}
with  $|t-s|\le \delta $ and $|x-y|_\infty\le\varepsilon$.
Let $\{U_j:=x_j+(\varepsilon/2)Q: j\in \N\}$ be an enumeration of the open covering
$\left\{({\varepsilon}/{2})({z}/{2}+Q):z\in \ZZ^n\right\}$
of $\R^n$, where $Q=(-1,1)^n$. Clearly,
\begin{equation}
\label{b-localized}
|b(t,x)-b(s,y)|\le \eta,\quad s,t\in J_k,\quad x,y\in U_j.
\end{equation}
Let $\phi$ be a smooth cut-off function with
support contained in  $(\varepsilon/2) Q$
such that $\phi\equiv1$ on $(\varepsilon/4)Q$.
Define
$$
\phi_j:=(\tau_{x_j}\phi)\Big(\sum_{k\in\N}(\tau_{x_k}\phi)^2\Big)^{-1/2},
\quad j\in\N,
$$
where $(\tau_{x_j}\phi)(x):=\phi(x-x_j)$.
Consequently, $\phi_j$ is a smooth cut-off function with
$\text{supp}(\phi_j)\subset U_j$ and $\sum_j \phi_j^2\equiv 1$.
For a function space
${\mathfrak F}(J;\R^n)\subset L_p(J;L_p(\R^n))$ we define
\begin{equation*}
\begin{aligned}
r (h_j):&=\sum_j\phi_jh_j,\quad &&(h_j)\in {\mathfrak F}(J;\R^n)^\nn,\\
  r^c h:&=(\phi_jh),\quad &&h\in{\mathfrak F}(J;\R^n).
\end{aligned}
\end{equation*}
Similarly as in \cite[Section 9]{AHS94} one shows that
\begin{equation}
\label{properties-r-rc}
r\in \Li(\ell_p({\mathfrak F}(J;\R^n)),{\mathfrak F}(J;\R^n)),
\
r^c\in \Li({\mathfrak F}(J;\R^n),\ell_p({\mathfrak F}(J;\R^n))),
\quad rr^c=I,
\end{equation}
for
${\mathfrak F}(J;\R^n)\in\{\FF_4(a), \EE_4(a)\}.$

Let $\theta$ be a smooth cut-off function with
$\text{supp}(\theta)\subset (\eps/2)Q$ such that $\theta\equiv 1$ on $\text{supp}(\phi)$
and let $\theta_j:=\tau_{x_j}\theta.$
Define
\begin{equation*}
b_{j,k}(t,x):=
\theta_j(x)\left(b(t,x)-b(k\delta,x_j)\right),
\quad (t,x)\in J\times\R^n.
\end{equation*}
It follows that
\begin{equation}
\label{bjk-small}
\|b_{j,k}\|_{BC(J_k\times\R^n)}+\|b_{j,k}\|_{\FF_4(J_k)}\le c_0\eta,
\quad k=0,\ldots,m,\quad j\in \nn,
\end{equation}
provided $\delta$ is chosen small enough.
Indeed, the estimates for $\|b_{j,k}\|_{BC(J_k,\R^n)}$
follow immediately from \eqref{b-localized}, while
the estimates for $\|b_{j,k}\|_{\FF_4(J_k)}$ can be shown
by approximating $b$ by functions that have better time regularity
and by carefully estimating the products
$\|\theta_j(b-b(k\delta,x_j))\|_{\FF_4(J_k)}$.

We now concentrate on the first interval $J_0=[0,\delta]$.
Let $L\in \Li({_0\EE}_4(a),{_0\FF}_4(a))$ denote
the operator with symbol $\sigma\tau k(z)$, i.e.\
$$L:=(\sigma D_n^{1/2}-[\![\rho]\!]\gamma_a D^{-1/2}_n)k(GD_n^{-1}):=L_1+L_2. $$
If follows from \eqref{bjk-small} and step (iii)
that the operator
\begin{equation*}
S_j:=G+L+(b(0,x_j)+b_{j,0}|\nabla):
{_0}\EE_4(\delta)\to {_0}\FF_4(\delta)
\end{equation*}
is invertible.
Moreover,
there is a constant $C_0$, depending only on
$\sup_j|b(0,x_j)|$ - and therefore only on $\|b\|_{BC(J\times \R^n)}$ -
such that
$
\|S_j^{-1}\|_{\Li(_{0}\FF_4(\delta),{_0}\EE_4(\delta))}\le C_0,
\ j\in\N.
$
\medskip\\
(vi)
Suppose that for a given $g\in {_0\FF}_4(\delta)$ we have a
solution $h\in {_0\EE}_4(\delta)$ of
\begin{equation*}
\label{redlinFBpert}
Gh +Lh +(b|\nabla)h=g.
\end{equation*}
Multiplying this equation by  $\phi_j$,
using that
$b\,\partial^\alpha\phi_j= (b(0,x_j)+ b_{j,0})\,\partial^\alpha \phi_j$
and  $rr^c=I$ this yields
\begin{equation*}
\label{lp-equation}
S_j\phi_jh-[L,\phi_j]h-(b|\nabla\phi_j) h
=\left(S_j-[L,\phi_j]r-(b|\nabla\phi_j)r\right)r^ch=r^cg,
\end{equation*}
where $[\cdot,\cdot ]$ denotes the commutator.
We now interpret this equation as an equation in
$\ell_p({_0}\FF_4(\delta))$.
It follows from step (iv) that
$(S_j)\in \text{Isom}(\ell_p({_0}\EE_4(\delta)),\ell_p({_0}\FF_4(\delta)))$ and
\begin{equation}
\label{S_j}
\|(S^{-1}_j)\|_{\Li(\ell_p({_0}\FF_4(\delta)),\ell_p({_0}\EE_4(\delta)))}\le C_0.
\end{equation}
We shall show below in step (vi) that the commutators satisfy
\begin{equation}
\label{commutators}
([L,\phi_j]+(b|\nabla\phi_j))\in \Li({_0}\FF(a),\ell_p(_{0}\FF_4(a))).
\end{equation}
Assuming this property, it follows from \eqref{properties-r-rc} that
\begin{equation*}
\|(([L,\phi_j]+(b|\nabla\phi_j))r(h_j))\|_{\ell_p({_0}\FF_4(\delta))}
\le C\|(h_j)\|_{\ell_p({_0}\FF_4(\delta))}
\le C\delta^\alpha\|(h_j)\|_{\ell_p({_0}\EE_4(\delta))}
\end{equation*}
for some $\alpha$ depending only on $p$ and $n$.
Therefore, choosing $\delta$ small enough
we can conclude that
$
\left(S_j-([L,\phi_j]+(b|\nabla \phi_j))r\right)
\in \text{Isom}(\ell_p(_{_0}\EE_4(\delta)),\ell_p({_0}\FF_4(\delta)))
$
with
\begin{equation*}
\|\left(S_j-([L,\phi_j]+(b|\nabla \phi_j))r\right)^{-1}\|\le 2C_0.
\end{equation*}
Let
$T_b:=r\left(S_j-([L,\phi_j]+(b|\nabla \phi_j))r\right)^{-1}r^c$.
Then $T_b\in\Li({_0}\FF_4(\delta),{_0}\EE_4(\delta))$ is a left inverse of
$S_b:=G+L+(b|\nabla)$.
Hence
\begin{equation}
\label{a-priori}
\|h\|_{{_0}\EE_4(\delta)}=
\|T_bS_bh\|_{{_0}\EE_4(\delta)}\le 2C_0\|r\|\,\|r^c\|\,\|S_bh\|_{{_0}\EE_4(\delta)},
\quad h\in {_0}\EE_4(\delta).
\end{equation}
Replacing $b$ by $\rho b$, $\rho \in[0,1]$,
we have a continuous family $\{S_{\rho b}\}$ of operators $S_{\rho b}$
which all satisfy the a-priori estimate \eqref{a-priori}
uniformly in $\rho\in [0,1]$.
Since $S_0$ is an isomorphism, we can infer from
a homotopy argument that $S_b$ is an isomorphism as well.
Repeating successively these arguments for the intervals $J_k$,
including the reduction from step (i), proves the assertion of the corollary.
\smallskip\\
(vii) We still have to verify the
estimate in \eqref{commutators}.
Since the covering $\{U_j:j\in\N\}$
has finite multiplicity, one obtains
 \begin{equation}
\label{partial-phi-g}
\|((\partial^\alpha\phi_j) g)\|_{\ell_p({_0}\FF_4(a))}
\le C(\alpha)\|g\|_{{_0}\FF_4(a)},
\quad g\in {_0}\FF_4(a).
\end{equation}
This together with Proposition \ref{le:3}(b) shows that
\begin{equation*}
\|(b|\nabla \phi_j)h\|_{\ell_p(_{0}\FF_4(a)}
\le C\|bh\|_{{_0}\FF_4(a)}
\le C_0 (\|b\|_\infty+\|b\|_{\FF_4(a)})\|h\|_{{_0}\FF_4(a)}.
\end{equation*}
The estimates for the commutators $[L,\phi_j]$ are more involved.
The operator $A=GD_n^{-1}$ with canonical domain is sectorial and admits a bounded $\cH^\infty$-calculus
with angle $\pi/2$ in ${_0H}^s_p(J;K^r_p(\R^n))$, for $K\in\{H,W\}$,
and also in ${_0W}^s_p(J;K^r_p(\R^n))$ by real interpolation.
Hence fixing $\theta\in (0,\pi/2)$,  the following resolvent estimate holds
in these spaces:
$$\|z(z-A)^{-1}\|\le M,\quad \mbox{ for all } z\in -\Sigma_{\theta}.$$
The function $k(z)$ is holomorphic in $\C\setminus (-\infty, - 2\delta_0]$
for some $\delta_0>0$   and behaves like $1/z$ as $|z|\to\infty$. Choose the contour
$$\Gamma=(\infty,\delta_0]e^{i\psi}\cup\delta_0 e^{i[\psi,2\pi-\psi]}\cup[\delta_0,\infty)e^{-i\psi},$$
where $\pi>\psi >\pi-\theta$. Then we have the Dunford integral
$$k(A)=\frac{1}{2\pi i} \int_\Gamma k(z)(z-A)^{-1}dz,$$
which is absolutely convergent.
This shows that $k(A)$ is bounded, as is $Ak(A)$ 
thanks to $A\in \cH^\infty$, thus $A^{1/2}k(A)$ is bounded as well.
Therefore the identity $k(A)D_n^{-1/2}= G^{-1/2} A^{1/2}k(A)$ shows that $L_2$ is bounded since $G^{-1/2}$ is, 
and~\eqref{commutators} follows for $[L_2,\phi_j]$.
For the commutator $[L_1,\phi_j]$ we obtain
$$[L_1,\phi_j] = \sigma[k(A)D_n^{1/2},\phi_j]=\sigma[k(A),\phi_j]D_n^{1/2} +\sigma k(A)[D^{1/2}_n,\phi_j].$$
Using the Dunford integral for $k(A)$ this yields
$$[k(A),\phi_j] = \frac{1}{2\pi i} \int_\Gamma k(z)[(z-A)^{-1},\phi_j]dz
              =\frac{1}{2\pi i} \int_\Gamma k(z)(z-A)^{-1}[A,\phi_j](z-A)^{-1}dz,$$
hence with
\begin{align*}
[A,\phi_j]&= GD_n^{-1}[\phi_j,D_n]D_n^{-1}= A(\Delta\phi_j +
2(\nabla\phi_j|\nabla))D_n^{-1}\\
          &= A(\Delta \phi_j D^{-1}_n +2i(\nabla\phi_j|R)D^{-1/2}_n),
\end{align*}
we have
$$
[k(A),\phi_j]D_n^{1/2} = \frac{1}{2\pi i} \int_\Gamma k(z)A(z-A)^{-1}
\{-\Delta\phi_j G^{-1/2} A^{1/2} +2i(\nabla\phi_j|R) \} (z-A)^{-1}dz.
$$
Let $h\in{_0}\FF_4$ be given.
Then we obtain from
\begin{equation*}
\|k(z)A(z-A)^{-1}\|_{\Li({_0}\FF_4)}\le C/|z|,
\quad \|A^{1/2}(z-A)^{-1}\|_{\Li({_0}\FF_4)}\le C/|z|^{1/2}, \quad z\in\Gamma,
\end{equation*}
from \eqref{partial-phi-g},
and from Minkowski's inequality for integrals
\begin{equation*}
\begin{split}
&\Big\|\Big(\int_\Gamma k(z)A(z-A)^{-1}
\Delta\phi_j G^{-1/2} A^{1/2} (z-A)^{-1}h\,dz\Big)\Big\|_{\ell_p({_0}\FF_4)}\\
&\le C \int_\Gamma \frac{1}{|z|}
\|(\Delta\phi_jG^{-1/2}A^{1/2}(z-A)^{-1}h)\|_{\ell_p({_0}\FF_4)}\,|dz|\\
&\le
C\int_\Gamma\frac{1}{|z|^{3/2}}\|h\|_{{_0}\FF_4}\,|dz|
\le C\|h\|_{{_0}\FF_4}
\end{split}
\end{equation*}
where we also used that $G^{-1/2}$ is bounded on compact intervals.
In the same way we can estimate
the second term in the integral representation
of $[k(A),\phi_j]D^{1/2}$,
this time using the fact that $R$ is  bounded.

To estimate the commutators $[D_n^{1/2},\phi_j]$ in ${_0\FF}_4$
 note that
 \begin{equation*}
 \begin{split}
 (D_n)^{1/2} = D_n(D_n)^{-1/2} &= \frac{1}{\sqrt \pi} D_n \int_0^\infty e^{-D_n t}  t^{-\frac{1}{2}}\,dt \\
&=\frac{1}{\sqrt \pi}
\left( D_n \int_0^1 e^{-D_n t}  t^{-\frac{1}{2}}\,dt
+ D_n \int_1^\infty e^{-D_n t}t^{-\frac{1}{2}}\,dt\right) \\
&=: \frac{1}{\sqrt \pi} (T_1 + T_2),
\end{split}
\end{equation*}
with $e^{-D_n t}$ denoting the bounded analytic semigroup generated by the Laplacian in $H^s_p(\R^n)$ which extends by real interpolation  to $W^s_p(\R^n)$,
 and then canonically to ${_0\FF}_4$. Thus by \eqref{partial-phi-g}
 there is a constant $C>0$ such that for $h \in {_0\FF}_4$ we have
 \begin{equation*}
 \begin{split}
 \|(\phi_j T_2 h)\|_{\ell_p({_0\FF}_4)} &= \|(\phi_j \int_1^\infty D_n e^{-D_n t}  t^{-\frac{1}{2}} h\,dt)\|_{\ell_p({_0\FF}_4)} \\
 &\le C \|\int_1^\infty D_n e^{-D_n t}  t^{-\frac{1}{2}} h\,dt\|_{{_0\FF}_4} \\
 &\le C\int_1^\infty   t^{-\frac{3}{2}}\,dt\,\|h\|_{{_0\FF}_4}
 \le C \|h\|_{{_0\FF}_4}, \\
\|(T_2 \phi_j h)\|_{\ell_p({_0\FF}_4)} &= \|\int_1^\infty D_n e^{-D_n t}  t^{-\frac{1}{2}} (\phi_jh) \,dt\|_{\ell_p({_0\FF}_4)} \\
&\le C  \|(\phi_jh)\|_ {\ell_p({_0\FF}_4)} \le C \|h\|_{{_0\FF}_4} .
\end{split}
\end{equation*}
 Hence $\|([\phi_j,T_2]h)\|_{\ell_p({_0\FF}_4)} \le C \|h\|_{{_0\FF}_4}$.

We consider next the commutator $[T_1,\phi_j]$.
Let $k_t(x)=(2\pi t)^{-n/2}\exp(-|x|^2/4t)$ denote the Gaussian kernel. Then for fixed $t>0$, the operator $D_ne^{-D_nt}$ is the convolution with kernel $-\Delta k_t(x)$, which is of class $C^\infty$.
It is not difficult to see that there are constants $C,c>0$ such that
\begin{equation}
\label{heat-kernel}
|\Delta k_t(x)|\le C {t^{-(n+2)/2}}\;e^{-c|x|^2/t},\quad x\in\R^n,\ t>0.
\end{equation}
Choosing a cut-off function $\chi\in C^\infty(\R^n)$ with
$\chi\equiv1$ in $B_\rho(0)$, $\text{supp}\,(\chi)\subset B_{2\rho}(0)$ and
$0\le \chi\le 1$ elsewhere, we set
$$-\Delta k_t(x)= -(1-\chi(x))\Delta k_t(x)-\chi(x)\Delta k_t(x)
=: k_{3,t}(x)+k_{4,t}(x),\quad x\in \R^n, \; t>0,$$
and we denote by $T_l$ the convolution operators with
kernels $\int_0^1k_{l,t}t^{-1/2}dt$, $l=3,4$. For the kernel of $T_3$ we obtain
from \eqref{heat-kernel}
the estimate
\begin{equation*}
\begin{split}
|\int_0^1 k_{3,t}(x)t^{-1/2}\,dt|
&\le C\int_0^1 e^{-c|x|^2/t} t^{-(n+3)/2}\,dt\\
&\le C e^{-c_1|x|^2}\int^\infty_1 e^{-c_2|x|^2s}s^{(n-1)/2}ds
\le C e^{-c_1|x|^2},
\end{split}
\end{equation*}
as $k_{3,t}(x)=0$ for $|x|\le \rho$.
Thus this kernel is in $L_1(\R^n)$ and hence we may estimate the commutator $[T_3,\phi_j]$ in the same way as $[T_2,\phi_j]$.

For the remaining commutator $[T_4,\phi_j]$
note that
$$\partial^\alpha[T_4,\phi_j]= \sum_{\beta\le\alpha} \binom{\alpha}{\beta}
 [T_4, \partial^\beta\phi_j]\partial^{\alpha-\beta}.$$
This shows that it is enough to estimate the commutator $[T_4,\phi_j]$ in $L_p(\R^n)$, as it then extends to $H^m_p(\R^n)$ and by interpolation to $W^s_p(\R^n)$, and then canonically to ${_0\FF}_4$.
Next we observe that for $x,y \in \R^n$
\begin{equation*}
\partial^\alpha\phi_j(y)-\partial^\alpha\phi_j(x)
= \partial^\alpha\phi^\prime_j(x)(y-x)+r_{j,\alpha}(x,y),
\end{equation*}
where $|r_{j,\alpha}(x,y)|\le C|x-y|^{2},$
with some constant $C$ independent of $j$ and $|\alpha|\le2$.
Therefore
\begin{equation*}
\begin{split}
[T_4,\phi_j]h(x) &=
\int_0^1 \int_{\R^n} (\phi_j(y)-\phi_j(x))k_{4,t}(x-y)h(y)\,dy \,t^{-\frac{1}{2}}\,dt \\
&= -\phi^\prime_j(x) \int_{\R^n}\int_0^1
(y-x) k_{4,t}(x-y)t^{-\frac{1}{2}} \,dt\,h(y)\,dy + \\
&\qquad\qquad+  \int_{\R^n}\int_0^1 r_j(x,y) k_{4,t}(x-y)t^{-\frac{1}{2}} \,dt\,h(y)\,dy \\
 &=:T_{5,j}h(x) + T_{6,j}h(x).
\end{split}
\end{equation*}
 We observe that the support of the kernel $k_{4,t}$ is contained in $B_{2\rho}(0)$, and consequently we may replace $h$ by $\psi_jh$, where $\psi_j$
 is a cut-off function which equals 1 on
 $\text{supp}(\phi_j)+B_{2\rho}(0)$.
 In the following we fix a smooth cut-off function $\psi$ which equals $1$ on
 $\text{supp}(\phi)+B_{2\rho}(0)$ and then set $\psi_j:=\tau_{x_j}\psi$.
 We then have
 $$
 \|(T_{l,j}h)\|_{\ell_p(L_p)} = \|(T_{l,j}\psi_jh)\|_{\ell_p(L_p)}\le \sup_k\|T_{l,k}\|_{\Li(L_p)}\|(\psi_jh)\|_{\ell_p(L_p)}\le C \|h\|_{L_p},
 $$
 provided we can show that the operators $T_{l,k}$ are $L_p$-bounded with bound independent of $k\in\N$ for $l=5,6$.

The operators $T_{l,j}$ satisfy
\begin{equation*}
\begin{aligned}
 T_{5,j} h &= \phi^\prime_j(q * h) \quad \hbox{with} \ &q(x) &= \chi(x)\int_0^1 x \Delta k_t(x)t^{-\frac{1}{2}} \,dt, \, x \in \R^n  \\
 |T_{6,j} h| &\le  r * |h| \qquad \hbox{with} \ &r(x) &= C\chi(x)\int_0^1
 |x|^{2} |\Delta k_t(x)|t^{-\frac{1}{2}} \,dt, \, x \in \R^n. \\
\end{aligned}
\end{equation*}
The Fourier transform of $q$ is given by
$\widehat{q}(\xi) = C\widehat{\chi}*
\int_0^1 \nabla_\xi
 (|\xi|^2 e^{-t|\xi|^2})t^{-1/2}dt $ and we verify that
$$
\sup_{\alpha\le (1,\ldots,1)}\sup_{\xi \in \R^n}|\xi|^{|\alpha|}
| \partial^\alpha \widehat{q}(\xi)| \le M
$$
for some $M <\infty$. It thus follows from Mikhlin's multiplier theorem
that
\begin{equation*}
\|T_{5,j}h\|_{L_p}
\le C\|\phi^\prime_j\|_\infty \|h\|_{L_p}
\le C\|h\|_{L_p}.
\end{equation*}
Finally, in order to estimate
$T_{6,j}$ we infer from \eqref{heat-kernel}
that
\begin{equation*}
 r(x) \le C \int_0^1 |x|^{2}  e^{-c|x|^2/t}\;{t^{-\frac{n+3}{2}}}\, dt
\le C e^{-c_1|x|^2}|x|^{-(n-1)}\int_1^\infty e^{-c_2s}s^{(n-1)/2}ds
\end{equation*}
for $x \in \R^n$.
It follows that  $r\in L_1(\R^n)$ which implies by Young's inequality
  $\|T_{6,j} h\|_p \le C \|h\|_p$ with a uniform constant $C$.
\end{proof}

\begin{rems}
(a) We mention that the proof for the estimate of
$[D_n^{1/2},\phi_j]$ follows the ideas of \cite[Lemma 6.4]{DDHPV}.
\smallskip\\
(b) If $\rho_2\le\rho_1$, i.e. the light fluid lies above the heavy one,
then the estimate~\eqref{tilde-s-estimate} can be improved in the following sense:
for every $\beta>0$ and $\lambda_0>0$ there
are positive constants $\delta$,
$\eta=\eta(\beta)$ and $c_j=c_j(\beta,\lambda_0,\delta,\eta)$ such that
\begin{equation}
\label{stable-estimate}
     c_0\big[|\lambda|+|\tau|\big]\le \tilde s(\lambda,\tau,\zeta)
 \le c_1\big[|\lambda|+|\tau|\big]
\end{equation}
for all 
$(\lambda,\tau,\zeta)\in\Sigma_{\pi/2+\eta}\times\Sigma_\eta\times U_{\beta,\delta}$
and $|\lambda|\ge \lambda_0$.
For this we observe that estimates
\eqref{upper} and \eqref{lower-m} certainly also hold 
in case that $\rho_2\le\rho_1$.
On the other hand, given $M>0$ 
we conclude as in \eqref{k-M} that
$L\le \text{Re}\,((\rho_1-\rho_2)\gamma_a k(z))\le R$
and $|\text{Im}\,((\rho_1-\rho_2)\gamma_a k(z))|\le H$,
with appropriate positive constants $L,R,H$.   
This shows that there 
exists $\alpha=\alpha(M,\eta)\in (0,\pi/2)$ such that
\begin{equation}
\label{angle2}  
(\rho_1-\rho_2)\gamma_a k(z)/\tau\in \Sigma_\alpha,
\quad (\lambda,\tau)\in\Sigma_{\pi/2+\eta}\times\Sigma_{\eta},
\quad |z|\le M
\end{equation}
with $\eta\in (0,\eta_0)$ chosen small enough,
where we can assume that $\alpha$ coincides with 
the angle in \eqref{angle}. 
Combining \eqref{angle} and \eqref{angle2} yields
\begin{equation*}
\begin{split}
|\tilde s(\lambda,\tau,\zeta)|
&\ge c(\psi)\big[|\lambda|+|\tau(\sigma k(z) +i\zeta)
+(\rho_1-\rho_2)\gamma_a k(z)/\tau|\big]\\
&\ge c(\psi)c(\alpha)\big[|\lambda|+|\tau(\sigma k(z)+i\zeta)|
+|(\rho_1-\rho_2)\gamma_a k(z)/\tau|\big]\\
&\ge c_0(M,\beta,\delta,\eta)\big[|\lambda|+|\tau|]
\end{split}
\end{equation*}
provided $(\lambda,\tau,\zeta)\in\Sigma_{\pi/2+\eta}\times\Sigma_\eta
\times U_{\beta,\delta}$ and $|\lambda|\le M |\tau|^2$.
Noting again that the curves $|\lambda|=m|\tau|$ and $|\lambda|=M|\tau|^2$
intersect at $(m/M,m^2/M)$ we obtain
\eqref{stable-estimate} by choosing $M$ big enough.
\smallskip\\
(c) If $\rho_2\le\rho_1$ we can conclude from
the lower estimate in \eqref{stable-estimate} that
the function $\tilde s$ does not have zeros in 
$\overline\Sigma_{\pi/2}\times\R_+\times [-\beta,\beta]$.
This holds in particular true for
the symbol $s(\lambda,\tau):=\tilde s(\lambda,\tau,0)$,
indicating that there are no instabilities in case
that the light fluid lies on top of the heavy one.
\smallskip\\
(d)
If $\rho_2>\rho_1$ then it is shown in \cite{PrSi09c} that the symbol
$s$ has for each $\tau\in (0,\tau_\ast)$ with 
$\tau_*:=((\rho_2-\rho_1)\gamma_a/\sigma)^{1/2}$
a zero $\lambda=\lambda(\tau)>0$, pertinent to the Rayleigh-Taylor instability.
\smallskip\\
(e) Further mapping properties of the boundary symbol 
$s(\lambda,\tau):=\tilde s(\lambda,\tau,0)$
and the associated operator $S$ in case that $\gamma_a=0$ 
have been derived in
\cite{PrSi08}. In particular, we have investigated
the singularities and zeros of $s$, and
we have studied the mapping properties of $S$ in case
of low and high frequencies, respectively.
\smallskip\\
\end{rems}
\section{The nonlinear problem}
In this section we prove existence und uniqueness
of solutions for the nonlinear problem \eqref{tfbns2},
and we show additionally that solutions
immediately regularize and are real
analytic in space and time.
In order to facilitate this
task, we first introduce some notation.
We set
\begin{equation}
\label{EE}
\begin{split}
&\EE_1(a):= \{u\in H^1_p(J;L_p(\R^{n+1},\R^{n+1}))
\cap L_p(J;H^2_p(\dot\R^{n+1},\R^{n+1})):\: [\![u]\!]=0\}, \\
&\EE_2(a):=L_p(J;\dot H^1_p(\dot\R^{n+1})), \\
&\EE_3(a):=
W^{1/2-1/2p}_p(J;L_p(\R^n)) \cap L_p(J;W^{1-1/p}_p(\R^n)), \\
&\EE_4(a):= W^{2-1/2p}_p(J;L_p(\R^{n}))
\cap H^1_p(J;W^{2-1/p}_p(\R^n))\\
&\hspace{1.3cm}\cap W^{1/2-1/2p}_p(J;H^2_p(\R^n))
\cap L_p(J;W^{3-1/p}_p(\R^n)),\\
&\EE(a)_{\phantom{3}}:= \{(u,\pi,q,h) \in \EE_1(a)\times
\EE_2(a)\times \EE_3(a)\times\EE_4(a):\: [\![\pi]\!]=q\}.
\end{split}
\end{equation}
The space $\EE(a)$ is given the natural norm
\begin{equation*}
\|(u,\pi,q,h)\|_{\EE(a)}
=\|u\|_{\EE_1(a)}+\|\pi\|_{\EE_2(a)}+\|q\|_{\EE_3(a)}+\|h\|_{\EE_4(a)}
\end{equation*}
which turns it into a Banach space.
Moreover, we set
\begin{equation}
\label{FF}
\begin{split}
&\FF_1(a):=L_p(J;L_p(\R^{n+1},\R^{n+1})), \\
&\FF_2(a):=H^1_p(J;H^{-1}_p(\R^{n+1}))\cap L_p(J;H^1_p(\dot\R^{n+1})), \\
&\FF_3(a):=W^{1/2-1/2p}_p(J;L_p(\R^n,\R^{n+1}))
\cap  L_p(J;W^{1-1/p}_p(\R^n,\R^{n+1})), \\
&\FF_4(a):= W^{1-1/2p}_p(J;L_p(\R^{n}))\cap L_p(J;W^{2-1/p}_p(\R^{n})),\\
&\FF(a)_{\phantom{3}}:=\FF_1(a)\times \FF_2(a)\times
\FF_3(a)\times \FF_4(a).
\end{split}
\end{equation}
The generic elements of $\FF(a)$ are the functions $(f,f_d,g,g_h)$.
\medskip\\
Let $b\in \FF_4(a)^n$ be a given function.
Then we define the nonlinear mapping
\begin{equation}
\label{K} N_b(u,\pi,q,h):=\big(F(u,\pi,h),F_d(u,h), G(u,q,h),
(b-\gamma v|\nabla h)\big)
\end{equation}
for $(u,\pi,q,h)\in\EE(a)$, where, as before, $u=(v,w)$,
$F=(F_v,F_w)$ and
$G=(G_v,G_w)$. We will now study the mapping properties of $N_{b}$ and
we will derive estimates for the Fr\'echet derivative of $N_{b}$.
\goodbreak
\begin{prop}
\label{pro:estimates-K} Suppose $p>n+3$ and  $b\in \FF_4(a)^n$.  Then
\begin{equation}
\label{K-analytic}
N_{b}\in C^\omega(\EE(a)\,,\FF(a)),\quad a>0.
\end{equation}
Let $DN_{b}(u,\pi,q,h)$ denote the Fr\'echet derivative of
$N_{b}$ at $(u,\pi,q,h)\in \EE(a)$. Then
$ DN_{b}(u,\pi,q,h) \in\Li({_0}\EE(a), {_0}\FF(a))$,
and for any number $a_0>0$
there is a positive constant
$M_0=M_0(a_0,p)$ such that
\begin{equation*}
\label{Frechet}
\begin{split}
 &\|DN_{b}(u,\pi,q,h)\|_{\Li({_0}\EE(a),\, {_0}\FF(a))} \\
 &\le M_0\big[
  \|b-\gamma v\|_{BC(J;BC)\,\cap\, \FF_4(a)}
 +\|(u,\pi,q,h)\|_{\EE(a)}  \big]\\
& + M_0\big[\big(\|\nabla h\|_{BC(J;BC^1)}
+\|h\|_{\EE_4(a)}+\|u\|_{BC(J;BC)}\big)\|u\|_{\EE_1(a)}\big]\\
&+ M_0\big[ P(\|\nabla h\|_{BC(J;BC)})\|\nabla h\|_{BC(J;BC)}
+Q\big(\|\nabla h\|_{BC(J;BC^1)},\|h\|_{\EE_4(a)}\big)\|h\|_{\EE_4(a)}\big]
\end{split}
\end{equation*}
for all $(u,\pi,q,h)\in \EE(a)$ and all $a\in (0,a_0]$.
Here, $P$ and $Q$ are fixed polynomials with coefficients
equal to one.
\end{prop}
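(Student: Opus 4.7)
The plan is to establish analyticity and the derivative bound term-by-term, using the explicit formulas~\eqref{2.2}--\eqref{2.4} for $F$, $F_d$, $G$, together with the bilinear structure of the last component $(b-\gamma v|\nabla h)$. The whole argument rests on a small number of multiplication and embedding facts which are collected in Lemma~\ref{le:3} of the appendix; the proposition is then essentially a bookkeeping exercise, with one genuine subtlety concerning $F_d$.

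First I would record the embeddings available under the assumption $p>n+3$. Sobolev embedding yields $\EE_4(a)\hookrightarrow BUC(J;BUC^2(\R^n))$, $\EE_1(a)\hookrightarrow BUC(J\times\R^{n+1},\R^{n+1})$ together with $\nabla u\in BUC(\mathcal O,\R^{(n+1)^2})$, and moreover $\FF_4(a)$ is a multiplication algebra, exactly as already exploited in step (i) of the proof of Theorem~\ref{th:b}. The trace $\gamma$ maps $\EE_1(a)$ continuously into $\FF_3(a)^{n+1}$, and the interface restriction of $\EE_4(a)$ lies in $\FF_4(a)$. Combined with Lemma~\ref{le:3}, these facts allow every pointwise product in $F$, $F_d$, $G$ and in $(b-\gamma v|\nabla h)$ to be estimated in the corresponding target space.

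Second, I would verify that each component of $N_b$ is real analytic. The pieces $F_v$, $F_w$, $F_d$ and the non-curvature parts of $G_v$, $G_w$ are polynomial in $u$, $\nabla u$, $\nabla_x^2 u$, $\pi$, $\nabla h$, $\nabla^2 h$, $\partial_t h$ and $[\![\pi]\!]$, so the associated Nemytskii operators are polynomial, hence analytic on $\EE(a)$. The curvature term $G_\kappa(h)$ in~\eqref{2.4} is a rational function of $\nabla h$ with denominators $(1+|\nabla h|^2)^{1/2}$ and $(1+|\nabla h|^2)^{3/2}$; expanding by the binomial series—which converges in the algebra $\FF_4(a)$ provided $\|\nabla h\|_\infty<1$—presents $G_\kappa$ as an absolutely convergent power series of polynomial operators into $\FF_3(a)$ and $\FF_4(a)$, and is thus analytic. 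Finally, $(b-\gamma v|\nabla h)$ is bilinear in $(v,h)$ modulo the given $b$, and its analyticity into $\FF_4(a)$ is immediate from the algebra property.

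Third, the derivative estimate is obtained by differentiating each monomial and applying Lemma~\ref{le:3} piecewise. Schematically, $F$ is a sum of terms of the form $(\nabla h)^\alpha(\nabla u)^\beta\pi^\gamma$ together with transport terms $u\cdot\nabla u$; Leibniz's rule splits the variation into first-order contributions, which produce the summand $M_0\|(u,\pi,q,h)\|_{\EE(a)}$, and genuinely higher-order cross terms, producing factors of the form $\|\nabla h\|_{BC(J;BC^1)}\|u\|_{\EE_1(a)}$, $\|h\|_{\EE_4(a)}\|u\|_{\EE_1(a)}$ and $\|u\|_{BC(J;BC)}\|u\|_{\EE_1(a)}$. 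Differentiating the binomial expansion of $G_\kappa$ produces $\nabla h$ times a power series evaluated at $|\nabla h|^2$, which packages into the polynomials $P$ and $Q$ with unit coefficients acting on $\|\nabla h\|_{BC(J;BC)}$, respectively on $(\|\nabla h\|_{BC(J;BC^1)},\|h\|_{\EE_4(a)})$. The variation of $(b-\gamma v|\nabla h)$ is $-(\gamma\delta v|\nabla h)+(b-\gamma v|\nabla\delta h)$, and the multiplication estimate of Lemma~\ref{le:3}(c) gives exactly the factor $\|b-\gamma v\|_{BC(J;BC)\cap\FF_4(a)}$ appearing in the statement.

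The main obstacle will be the verification that $F_d=(\nabla h|\partial_y v)$ lies in $\FF_2(a)=H^1_p(J;\dot H^{-1}_p(\R^{n+1}))\cap L_p(J;H^1_p(\dot\R^{n+1}))$. The second component follows directly from the algebra property applied on each half-space, but the $H^1_p(J;\dot H^{-1}_p)$-regularity requires a trick: since $\nabla h$ is independent of $y$, one writes $(\nabla h|\partial_y v)=\partial_y(\nabla h|v)$, and uses that $\partial_y\colon L_p(J;L_p(\R^{n+1}))\to L_p(J;\dot H^{-1}_p(\R^{n+1}))$ is bounded. The time derivative then acts on the product $(\nabla h|v)$, and the required bound is obtained by combining the time regularity $h\in W^{2-1/2p}_p(J;L_p(\R^n))\cap H^1_p(J;W^{2-1/p}_p(\R^n))$ with the embedding $\EE_1(a)\hookrightarrow BUC(J\times\R^{n+1})$ and the symmetric factor $\partial_t v\in L_p(J;L_p)$. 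Analogous manipulations, together with the fact that $\partial_y\pi\in L_p(J;L_p)$ and $\nabla h\in BUC$, handle the pressure products $\partial_y\pi\,\nabla h$ in $\FF_1(a)$ and $[\![\pi]\!]\nabla h$ in $\FF_3(a)$, which are the other boundary-regularity cases that are not immediate from the algebra property alone.
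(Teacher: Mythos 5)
Your overall strategy coincides with the paper's: the proof there is a term-by-term estimation of the Fr\'echet derivatives of $F_1,\dots,F_5$, $F_d$, the constituents of $G$, and $H_b=(b-\gamma v|\nabla h)$, using the embeddings of Proposition~\ref{pro:6.1} and the multiplication estimates of Lemma~\ref{le:2}, Corollary~\ref{cor-E-3} and Lemma~\ref{le:3}; in particular your identification of the subtle point for $F_d$, namely writing $(\nabla h|\partial_y v)=\partial_y(\nabla h|v)$ and estimating the product in $H^1_p(J;L_p)$ before applying $\partial_y$, is exactly the paper's argument. (The paper does not reprove the analyticity statement at all; it refers to \cite[Proposition 6.2]{PrSi09a} and concentrates on the derivative bounds.)

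There is, however, a genuine gap in your treatment of the curvature terms. You propose to handle the factors $(1+|\nabla h|^2)^{-1/2}$ and $(1+|\nabla h|^2)^{-3/2}$ in $G_\kappa$ by the binomial series, claiming convergence ``in the algebra $\FF_4(a)$ provided $\|\nabla h\|_\infty<1$.'' This is not correct: with a multiplication estimate of the form $\|g_1g_2\|\le C(\|g_1\|_\infty+\|g_1\|)\|g_2\|$, absolute convergence of $\sum_k a_k\,(|\nabla h|^2)^k$ requires $C\big(\||\nabla h|^2\|_\infty+\||\nabla h|^2\|_{\FF_4(a)}\big)$ to lie below the radius of convergence, and smallness of the sup-norm alone does not control $\||\nabla h|^2\|_{\FF_4(a)}$ (resp.\ $\EE_3(a)$). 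Even granting convergence, you would obtain analyticity and the derivative bound only on a ball, whereas the proposition asserts $N_b\in C^\omega(\EE(a),\FF(a))$ and the estimate for \emph{all} $(u,\pi,q,h)\in\EE(a)$ --- the polynomials $P$ and $Q$ in the stated bound exist precisely to absorb large $\nabla h$. The paper avoids the series entirely: with $\beta=\sqrt{1+|\nabla h|^2}$ it proves the quotient estimate of Lemma~\ref{le:2}(d) from the elementary pointwise inequality $|1/\beta(s,x)-1/\beta(t,x)|\le|\psi(t,x)-\psi(s,x)|$, valid with no smallness assumption, and then computes $DG_5(h)\bar h$ as an explicit rational expression bounded via Corollary~\ref{cor-E-3} and \eqref{quotient}. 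You should replace the series argument by this direct computation (and, for the analyticity claim, either expand locally about an arbitrary base point with the correct algebra-norm smallness, or invoke \cite{PrSi09a}). A further small imprecision: for $(b-\gamma v|\nabla h)$ to be estimated in $\FF_4(a)$ via Lemma~\ref{le:3}(b) you need the trace $\gamma$ to map $\EE_1(a)$ into $\FF_4(a)^{n+1}$, i.e.\ into $W^{1-1/2p}_p(J;L_p)\cap L_p(J;W^{2-1/p}_p)$, not merely into $\FF_3(a)^{n+1}$ as you state.
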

\begin{proof}
The proof of the proposition is relegated to the end of the appendix.
\end{proof}

Given $h_0\in W^{3-2/p}_p(\R^n)$ we define
\begin{equation}
\label{Theta}
\Theta_{h_0}(x,y):=(x,y+h_0(x)),\quad (x,y)\in \R^n\times \R.
\end{equation}
Letting $\Omega_{h_0,i}:=\{(x,y)\in\R^n\times \R: (-1)^i(y-(h_0(x))>0\}$
and $\Omega_{h_0}:=\Omega_{h_0,1}\cup \Omega_{h_0,2}$
we obtain from Sobolev's embedding theorem that
\begin{equation*}
\Theta_{h_0}\in
\text{Diff}^2 (\dot\R^{n+1},\Omega_{h_0})
\cap\text{Diff}^2 (\R^{n+1}_{-},\Omega_{h_0,1})
\cap \text{Diff}^2(\R^{n+1}_{+},\Omega_{h_0,2}),
\end{equation*}
i.e., $\Theta_{h_0}$ yields a $C^2$-diffeomrphism between
the indicated domains.
The inverse transformation obviously is given by
$\Theta^{-1}_{h_0}(x,y)=(x,y-h_0(x))$.
It then follows from the chain rule and the transformation rule for
integrals that
\begin{equation*}
\Theta^\ast_{h_0}\in\text{Isom}
(H^k_p(\dot\R^{n+1}),H^k_p(\Omega_{h_0})),
\quad [\Theta^\ast_{h_0}]^{-1}=\Theta^{h_0}_\ast,
\quad k=0,1,2,
\end{equation*}
where we use the notation
\begin{equation*}
\begin{split}
&\Theta^\ast_{h_0}u:=u\circ\Theta_{h_0},\quad u:\Omega_{h_0}\to \R^m, \\
&\Theta^{h_0}_\ast v:=v\circ\Theta^{-1}_{h_0},\quad v:\dot\R^{n+1}\to\R^m.
\end{split}
\end{equation*}
We are now ready to prove our main result of this section.

\goodbreak
\begin{thm}
\label{th:nonlinear}
{\rm (}Existence of solutions for the
nonlinear problem \eqref{tfbns2}{\rm)}.
\begin{itemize}
\item[(a)] For every $\beta>0$ there exists a constant
$\eta=\eta(\beta)>0$ such that for all initial values
$$(u_0,h_0)\in W^{2-2/p}_p(\dot\R^{n+1},\R^{n+1})
\times W^{3-2/p}_p(\R^n)\quad\text{with}\quad [\![u_0]\!]=0,
$$
satisfying the compatibility conditions
\begin{equation}
\label{compatibility}
[\![\mu D(\Theta^{h_0}_\ast u_0)\nu_0
-\mu(\nu_0| D(\Theta^{h_0}_\ast u_0)\nu_0)\nu_0]\!]=0,
\quad div(\Theta^{h_0}_\ast u_0)=0,
\end{equation}
and the smallness-boundedness condition
\begin{equation}
\label{sm}
\|\nabla h_0\|_\infty\le \eta,\quad \|u_0\|_\infty\le \beta,
\end{equation}
there is a number $t_0=t_0(u_0,h_0)$ such that
the nonlinear problem \eqref{tfbns2}
admits a unique solution $(u,\pi,[\![\pi]\!],h)\in \EE_1(t_0)$.

\vspace{1mm}
\item[(b)] The solution has the additional regularity properties
\begin{equation}
(u,\pi)\in C^\omega((0,t_0)\times\dot\R^{n+1},\R^{n+2}),\quad
[\![ \pi ]\!],h\in C^\omega((0,t_0)\times\R^n). \\
\end{equation}
In particular, ${\mathcal M}=\bigcup_{t\in (0,t_0)}\big(\{t\}\times
\Gamma(t)\big) \text{ is a real analytic manifold}$.
\end{itemize}
\end{thm}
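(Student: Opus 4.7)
My plan has two parts: for existence and uniqueness in part~(a), I would set up \eqref{tfbns2} as a fixed-point equation in ${_0}\EE(t_0)$ and solve it by the contraction mapping principle, using Theorem~\ref{th:b} to invert the linear part and Proposition~\ref{pro:estimates-K} to control the nonlinearity on small balls. For the instantaneous analyticity in part~(b), I would apply the Angenent--Masuda parameter trick: introduce a multi-parameter family of rescaled and shifted solutions, show via the implicit function theorem that this family is real analytic in the parameters, and then transfer the analyticity to the space-time variables.

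For part~(a), the first step is a careful choice of the convection coefficient $b$ appearing in Theorem~\ref{th:b}. Writing $u_0 = (v_0, w_0)$, I would take $b_0 := \gamma v_0$ (extended to $\R^n$, with $|b_0|_\infty \le \beta$ by \eqref{sm}) and $b_1 \in \FF_4(a)^n$ with $b_1(0) = 0$, so that $b(0, \cdot) = \gamma v_0$. This ensures that the residual $b - \gamma v$ vanishes at $t = 0$ and is small in $BC(J; BC) \cap \FF_4(a)$ for small $a$, which is precisely what Proposition~\ref{pro:estimates-K} needs. Next, using the compatibility conditions \eqref{compatibility} and standard extension arguments, I would construct a reference $z^* = (u^*, \pi^*, q^*, h^*) \in \EE(a_0)$ with $u^*(0) = u_0$, $h^*(0) = h_0$, satisfying conditions (f) and (g) of Theorem~\ref{th:b} and the constraint $[\![\pi^*]\!] = q^*$. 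Writing the unknown $z = z^* + \bar z$ with $\bar z \in {_0}\EE(a)$, the system \eqref{tfbns2} becomes the fixed-point equation
\begin{equation*}
\bar z \;=\; L_b^{-1}\bigl[\,N_b(z^* + \bar z) - L_b z^*\,\bigr] \;=:\; T(\bar z),
\end{equation*}
where $L_b$ is the linear operator on the left-hand side of \eqref{linFBpert}. By Theorem~\ref{th:b}, $L_b^{-1}: {_0}\FF(a) \to {_0}\EE(a)$ exists and is bounded uniformly in $a \in (0, a_0]$, provided $|b_0| \le \beta$ and $\|b_1\|_{\FF_4(a)} \le 2\eta$.

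To verify that $T$ is a strict contraction on a small ball $B_r \subset {_0}\EE(a)$, I would invoke Proposition~\ref{pro:estimates-K}: the norm of $DN_b$ is controlled by $\|b - \gamma v\|_{BC(J;BC) \cap \FF_4(a)}$, $\|\nabla h\|_{BC(J;BC)}$, $\|u\|_{BC(J;BC)}$, and $\|\bar z\|_{\EE(a)}$. The first quantity is small thanks to the choice $b(0) = \gamma v_0$ and to short time; the second is small thanks to $\|\nabla h_0\|_\infty \le \eta$ combined with time continuity of the reference and smallness of $\bar z$; $\|u\|_{BC(J;BC)} \le 2\beta$ follows from \eqref{sm} and time continuity on a short interval; and the last is small on $B_r$. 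The polynomials $P$ and $Q$ are evaluated at uniformly bounded arguments. Together with the uniform bound on $L_b^{-1}$, this yields the contraction for suitable $a = t_0$ and radius $r$, hence a unique solution of \eqref{tfbns2} on $[0, t_0]$; uniqueness in the full class $\EE(t_0)$ follows from a standard local Lipschitz argument.

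For part~(b), I would implement the Angenent--Masuda parameter trick as in \cite{PrSi09a}. For $(\lambda, \xi, \mu) \in (1-\delta, 1+\delta) \times B_\delta(0) \times (1-\delta, 1+\delta) \subset \R \times \R^n \times \R$ with $\delta > 0$ small, define
\begin{equation*}
u_{\lambda, \xi, \mu}(t, x, y) := u(\lambda t,\, x + t\xi,\, \mu y), \qquad h_{\lambda, \xi, \mu}(t, x) := h(\lambda t,\, x + t\xi),
\end{equation*}
with analogous rules for $\pi$ and $q$. A direct computation shows that $z_{\lambda, \xi, \mu}$ solves a parameter-dependent system of the same structure as \eqref{tfbns2}, with coefficients and data depending real-analytically on $(\lambda, \xi, \mu)$. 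Applying the implicit function theorem to the map sending $(\lambda, \xi, \mu, \bar z)$ to $\bar z - L_b^{-1}[N_b^{\lambda, \xi, \mu}(z^* + \bar z) - L_b z^*]$ at $(\lambda, \xi, \mu, \bar z) = (1, 0, 1, \bar z^0)$, where $\bar z^0$ is the fixed point from part~(a), gives a real-analytic map $(\lambda, \xi, \mu) \mapsto \bar z(\lambda, \xi, \mu)$ into ${_0}\EE(t_1)$ for some $t_1 < t_0$, since its $\bar z$-derivative at the distinguished point is $I - L_b^{-1} DN_b$, which is invertible by the contraction argument. Fixing $(t_*, x_*, y_*) \in (0, t_1) \times \dot\R^{n+1}$ and using that $u_{\lambda, \xi, \mu}(t_*, x_*, y_*) = u(\lambda t_*, x_* + t_*\xi, \mu y_*)$, the analyticity of the parameter map at $(1, 0, 1)$ transfers to real analyticity of $u$ in a neighborhood of $(t_*, x_*, y_*)$; the same argument applies to $\pi$, and for $h$ and $[\![\pi]\!]$ without the $\mu$-scaling. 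The main obstacle will be to verify that the parameter-dependent system fits into the abstract framework of Theorem~\ref{th:b} and Proposition~\ref{pro:estimates-K} uniformly in $(\lambda, \xi, \mu)$ in a neighborhood of $(1, 0, 1)$, so that the solution map has the required smoothness in the parameters; this amounts to showing that the parameter trick preserves the structural hypotheses (notably the $\cH^\infty$-calculus exploited in the proof of Theorem~\ref{th:b}) with parameter-independent constants, and that the radius of the contraction ball can be chosen independently of the parameters.
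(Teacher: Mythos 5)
Your overall strategy coincides with the paper's: split off a reference function $z^*$ absorbing the initial and compatibility conditions, reduce \eqref{tfbns2} to a fixed point equation $z=L_b^{-1}K_b(z)$ in ${_0}\EE(a)$, contract using the uniform bound on $L_b^{-1}$ from Theorem~\ref{th:b} together with the derivative estimate of Proposition~\ref{pro:estimates-K}, and obtain analyticity by the Angenent--Masuda parameter trick. Two steps of your construction would, however, fail as written.

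First, the choice of $b$. In Theorem~\ref{th:b} the term $b_0$ is a \emph{constant vector} in $\R^n$, so you cannot set $b_0:=\gamma v_0$, which is a function of $x$. Nor can you absorb $\gamma v_0$ into $b_1$ as a time-constant function: $b_1$ must lie in $L_p(J;W^{2-1/p}_p(\R^n,\R^n))$, whereas the trace $\gamma v_0$ of $v_0\in W^{2-2/p}_p(\dot\R^{n+1})$ only lies in $W^{2-3/p}_p(\R^n,\R^n)$, so its time-constant extension is not admissible. The paper's resolution is to take $b(t):=e^{-D_nt}\gamma v_0$, which belongs to $\FF_4(a)^n$ by parabolic smoothing (\cite[Lemma 8.2]{EPS03}), satisfies $b(0)=\gamma v_0$ and $\|b\|_\infty\le\|\gamma v_0\|_\infty\le\beta$, and is covered by the localization step (v) in the proof of Theorem~\ref{th:b}. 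Note that it is this localization step, not the small-perturbation step, that handles such $b$: your side condition $\|b_1\|_{\FF_4(a)}\le 2\eta$ is not part of the hypotheses of Theorem~\ref{th:b}, and the relevant uniformity is that the bound $M$ on $L_b^{-1}$ depends only on $\beta$ while the admissible interval length $\delta_0$ may depend on $b$ itself.

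Second, in part (b) the scaling $y\mapsto\mu y$ does not fix the initial data: $u_{\lambda,\xi,\mu}(0,\cdot)=u_0(\cdot,\mu\,\cdot)$ depends on $\mu$, and the map $\mu\mapsto u_0(\cdot,\mu\,\cdot)$ is not real analytic (indeed in general not even continuous) into $W^{2-2/p}_p$, so the implicit function theorem cannot be applied at the fixed reference data as you propose. The admissible parameter transformations must reduce to the identity at $t=0$, as do $t\mapsto\lambda t$ and $x\mapsto x+t\xi$; this is how the scheme of \cite{PrSi09a} cited in the paper is set up, the only modification here being that the convection term is replaced by the operator ${\mathcal D}_{\lambda,\nu}$ of \eqref{D-lambda-nu} and that $g^*_h=0$ for the chosen $b$. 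With these corrections your argument matches the paper's.
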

\begin{proof}
The proof of this result proceeds in a similar way
as the proof of Theorem~6.3 in \cite{PrSi09a}.

For a given function $b\in\FF_4(a)^n$
we consider the nonlinear problem
\begin{equation}
\label{nonlinear-b}
\left\{
\begin{aligned}
\rho\partial_tu -\mu\Delta u+\nabla \pi&= F(u,\pi,h)
    &\ \hbox{in}\quad &\dot\R^{n+1}\\
{\rm div}\,u&= F_d(u,h)&\ \hbox{in}\quad &\dot\R^{n+1}\\
-[\![\mu\partial_y v]\!] -[\![\mu\nabla_{x}w]\!] &=G_v(u,[\![\pi]\!],h)
    &\ \hbox{on}\quad &\R^n\\
-2[\![\mu\partial_y w]\!] +[\![\pi]\!] -\sigma\Delta h &= G_w(u,h)
    &\ \hbox{on}\quad &\R^n\\
[\![u]\!] &=0 &\ \hbox{on}\quad &\R^n\\
\partial_th-\gamma w+(b|\nabla h)&=(b-\gamma v|\nabla h) &\ \hbox{on}\quad &\R^n\\
u(0)=u_0,\; h(0)&=h_0, \\\end{aligned}
\right.
\end{equation}
which clearly is equivalent to \eqref{tfbns2}.
\smallskip\\
In order to  economize our notation we set $z:=(u,\pi,q,h)$ for
$(u,\pi,q,h)\in\EE(a)$. With this notation, the nonlinear
problem~\eqref{tfbns2} can be restated as
\begin{equation}
\label{FP-1} L_{b}z=N_{b}(z),\quad (u(0),h(0))=(u_0,h_0),
\end{equation}
where $L_{b}$ denotes the linear operator on the left-hand side of
\eqref{nonlinear-b}, and $N_{b}$ correspondently denotes
the nonlinear mapping on the right-hand site of \eqref{nonlinear-b}.

It is convenient to first introduce an auxiliary function
$z^\ast=z_b^\ast\in\EE(a)$ which resolves
the compatibility conditions
and the initial conditions in
\eqref{FP-1}, and then to solve the resulting reduced
problem
\begin{equation}
\label{FP-2} L_{b}z=N_{b}(z+z^\ast)-L_{b}z^\ast=:K_{b}(z), \quad z\in
{_0}\EE(a),
\end{equation}
by means of a fixed point argument.
\smallskip\\
(i) Suppose that $(u_0,h_0)$ satisfies the
(first) compatibility condition in \eqref{compatibility}, and let
\begin{equation*}
[\![\pi_0]\!]:=
\theta^\ast_{h_0}\{[\![\mu(\nu_0 |D(\Theta^{h_0}_\ast u_0)\nu_0)]\!]+\sigma\kappa\},
\end{equation*}
where $\theta_{h_0}:=\Theta_{h_0}|_{\R^{n+1}\times\{0\}}$.
Here we observe that
$\theta^\ast_{h_0}[\![\omega]\!]=[\![\Theta^\ast_{h_0}\omega]\!]$
for any function $\omega:\Omega_{h_0}\to\R^m$
which has one-sided limits.
It is then clear from  the definition in \eqref{2.3}--\eqref{2.4}
that the following compatibility conditions hold:
\begin{equation}
\label{comp-2}
\begin{aligned}
-[\![\mu\partial_y v_0]\!] -[\![\mu\nabla_{x}w_0]\!]
 &=G_v(u_0,[\![\pi_0]\!],h_0)
    &\ \hbox{on}\quad &\R^n\\
-2[\![\mu\partial_y w_0]\!] +[\![\pi_0]\!] -\sigma\Delta h_0 &=
G_w(u_0,h_0)
    &\ \hbox{on}\quad &\R^n\\
\end{aligned}
\end{equation}
where, as before, $u_0=(v_0,w_0)$. Next we introduce special
functions $(0,f^\ast_d,g^\ast,g^\ast_h)\in\FF(a)$ which resolve
the necessary compatibility conditions. First we set
\begin{equation}
\label{c-star} c^\ast(t):= \left\{
\begin{aligned}
&{\mathcal R}_{+} e^{-t D_{n+1}}\mathcal{E}_{+}(v_0|\nabla
h_0)
\quad\text{in}\quad \R^{n+1}_{+}, \\
&{\mathcal R}_{-} e^{-t D_{n+1}}\mathcal{E}_{-}(v_0|\nabla h_0)
\quad\text{in}\quad \R^{n+1}_{-},
\end{aligned}
\right.
\end{equation}
where ${\mathcal E}_{\pm}\in
\Li(W^{2-2/p}_p(\R^{n+1}_{\pm}),W^{2-2/p}_p(\R^{n+1}))$ is an
appropriate extension operator and ${\mathcal R}_{\pm}$ is the
restriction operator. Due to $(v_0|\nabla h_0)\in
W^{2-2/p}_p(\dot\R^{n+1})$ we obtain
\begin{equation*}
c^\ast\in H^1_p(J;L_p(\R^{n+1}))\cap
L_p(J;H^2_p(\dot\R^{n+1})).
\end{equation*}
Consequently,
\begin{equation}
\label{divergence-star} f^\ast_d:=\partial_y\, c^\ast\in \FF_2(a)
\ \text{ and }\  f^\ast_d(0)=F_d(v_0,h_0).
\end{equation}
Next we set
\begin{equation}
\label{star}
\begin{split}
g^\ast(t):=e^{-D_nt} G(u_0,[\![\pi_0]\!],h_0). \quad
g^\ast_h(t):=e^{-D_nt}(b(0)-\gamma v_0|\nabla h_0).
\end{split}
\end{equation}
It then follows from
\eqref{divergence-star} and \cite[Lemma 8.2]{EPS03} that
$(0,f^\ast_d,g^\ast,g^\ast_h)\in\FF(a)$. \eqref{comp-2} and the
second condition in \eqref{compatibility} show
that the necessary compatibility conditions of
Theorem~\ref{th:b}
are satisfied and we can conclude that the
linear problem
\begin{equation}
\label{FP-3} L_{b}z^\ast=(0,f^\ast_d,g^\ast,g^\ast_h), \quad
(u^\ast(0),h^\ast(0))=(u_0,h_0),
\end{equation}
has a unique solution $z^\ast=z_b^\ast\in\EE(a)$.
 With the auxiliary function $z^\ast$ now determined, we can
focus on the reduced equation \eqref{FP-2}, which can be converted
into the fixed point equation
\begin{equation}
\label{FP-4} z=L_{b}^{-1}K_{b}(z),\quad z\in{_0}\EE(a).
\end{equation}
 Due to the choice of
$(f^\ast_d,g^\ast,g^\ast_h)$ we have $K_{b}(z)\in {_0}\FF(a)$ for
any $z\in{_0}\EE(a)$, and it follows from
Proposition~\ref{pro:estimates-K} that
\begin{equation*}
\label{FP-6} K_{b}\in C^\omega({_0}\EE(a),{_0}\FF(a)).
\end{equation*}
Consequently, $L_{b}^{-1}K_{b}:{_0}\EE(a)\to {_0}\EE(a)$ is well defined
and smooth.
\medskip\\
(ii)
An inspection of the proof of Theorem~\ref{th:b}
shows that given $\beta>0$
we can find a positive number $\delta_0=\delta_0(b) $
such that
\begin{equation}
\label{FP-5-uniform}
L_{b}^{-1}\in \Li({_0}\FF(a),{_0}\EE(a)),\quad
\|L_{b}^{-1}\|_{\Li({_0}\FF(a),{_0}\EE(a))}\le M,
\quad a\in [0,\delta_0],
\end{equation}
whenever $b\in \FF_4(a)^n$ and $\|b\|_{BC[0,a],BC(\R^n))}\le \beta$.
It should be pointed out that the bound $M$
is universal for all functions $b\in\FF_4(a)^n$
with $\|b\|_\infty\le\beta$, whereas
the number $\delta_0=\delta(b)$ may depend on $b$.
\medskip\\
(iii)
We will now fix a pair of initial values $(u_0,h_0)\in
W^{2-2/p}_p(\dot\R^{n+1})\times W^{3-2/p}_p(\R^n)$ satisfying
\eqref{compatibility} and \eqref{sm} with
\begin{equation}
\label{eta} \eta:= 1/(16M_0M),
\end{equation}
where the constants $M_0$ and $M$ are given
in \eqref{Frechet} and  \eqref{FP-5-uniform},
respectively.
We choose
\begin{equation}
\label{b}
b(t):=e^{-D_nt}\gamma v_0,\quad t\ge 0.
\end{equation}
Then
$b\in\FF_4(a)^n$ and $\|b\|_{BC([0,a];BC(\R^n))}\le \|\gamma v_0\|_{BC(\R^n)}
\le \beta$ for any $a>0$,
as $\{e^{-D_nt}: t\ge 0\}$ is a contraction semigroup on $BU\!C(\R^n)$.
Hence the estimate \eqref{FP-5-uniform} holds true
for this (and any other choice) of initial values.
It should be pointed out once more that
the bound $M$ is universal for all initial values
$u_0$ with $\|v_0\|_\infty\le \beta$ - and hence
for $b(t):=e^{-D_nt}\gamma v_0$ -
whereas the number $\delta_0$ may depend on $\gamma v_0$.

We note in passing that $g^\ast_h=0$ for this particular choice of
the function $b$.
Without loss of generality we can assume that $M_0, M\ge 1$.
We shall show that $L_{b}^{-1}K_{b}$ is a contraction on a properly defined subset of
${_0}\EE(a)$ for $a\in (0,\delta_0]$ chosen sufficiently small. For
$r>0$ and $a\in (0,\delta_0]$ we set
\begin{equation*}
{_0}\BB_{\EE(a)}(z^\ast,r):=\{z\in \EE_1(a): z-z^\ast\in
{_0}\EE(a), \ \|z-z^\ast\|_{\EE(a)}< r\}.
\end{equation*}
We remark that $a$ and $r$ are independent parameters that can be
chosen as we please. Let then $r_0>0$ be fixed. It is not
difficult to see that there exists a number $R_0=R_0(u_0,h_0,\delta_0,r_0)$ such
that
\begin{equation*}
\begin{split}
\|\nabla (h+h^*)\|_{BC(J;BC^1)}&+\|h+h^*\|_{\EE_4(a)}
+\|u+u^*\|_{BC(J;BC)} \\
&+Q\big(\|\nabla (h+h^*)\|_{BC(J;BC^1)},\|h+h^*\|_{\EE_4(a)}\big) \le R_0
\end{split}
\end{equation*}
for all   $u\in {_0}\BB_{\EE_1(a)}(0,r)$ and $h\in
{_0}\BB_{\EE_4(a)}(0,r)$, with $a\in (0,\delta_0]$ and $r\in (0,r_0]$
arbitrary, where $z^*=(u^\ast,\pi^\ast,q^\ast,h^\ast)$ is the
solution of equation~\eqref{FP-3}
and where $Q$ is defined in Proposition~\ref{pro:estimates-K}.
Let $M_1:=M_0(1+R_0)$.
It then follows from
Proposition~\ref{pro:estimates-K} and \eqref{FP-5-uniform} that
\begin{equation}
\begin{split}
\label{FP-6-B}
\|D(L_{b}^{-1}K_{b})&(z)\|_{{_0}\EE(a)} \\
&\le M_1M\big[\|b-\gamma(v+v^\ast)\|_{BC(J;BC)\,\cap\,\FF_4(a)}+\|z+z^\ast\|_{\EE(a)}\big] \\
& + M_0M\big[P\big(\|\nabla(h+h^\ast)\|_{BC(J;BC)}\big)\|\nabla(h+h^\ast)\|_{BC(J;BC)}\big] \\
\end{split}
\end{equation}
for all $z\in {_0}\BB_{\EE(a)}(0,r)$ and $a\in (0,\delta_0]$.
\smallskip\\
(iv) For $(u_0,h_0)$
fixed, the norm of $z^*$ in $\EE(a)$ (which involves various
integral expressions evaluated over the interval $(0,a)$) can be
made as small as we like by choosing $a\in (0,\delta_0]$ small. Let
then $a_1\in (0,\delta_0]$ be fixed so that
\begin{equation}
\label{FP-7}
\begin{split}
&\|\nabla h^\ast\|_{BC([0,a_1],BC)}\le 2\eta, \\
& M_1M\big(\|b-\gamma v^\ast\|_{BC([0,a_1];BC)\,\cap\,\FF_4(a_1)}
  +\|z^\ast\|_{\EE(a_1)}\big)\le 1/8.
\end{split}
\end{equation}
Since
$(\nabla h^*,b-\gamma v^*)\in {_0}BC([0,\delta_0],BC(\R^n))$
and $\|\nabla h^*(0)\|_\infty=\|\nabla h_0\|_\infty\le \eta $,
the estimates in \eqref{FP-7}
certainly hold for $a_1$ sufficiently small.
\\
In a next step we choose $2r_1\in (0,r_0]$ so that
\begin{equation}
\label{FP-8}
\begin{split}
& \|\nabla h\|_{{_0}BC([0,a_1],BC(\R^n)} \le \eta, \\
&M_1M\big(\|\gamma v\|_{{_0}BC([0,a_1];BC)\,\cap\,{_0}\FF_4(a_1)}
  +\|z\|_{{_0}\EE(a_1)}\big)\le 1/8,
\end{split}
\end{equation}
for all $h\in {_0}\BB_{\EE(a_1)}(0,2r_1)$, $v\in {_0}\BB_{\EE(a_1)}(0,2r_1)$,
and $z\in {_0}\BB_{\EE(a_1)}(0,2r_1)$.
It follows from Proposition~\ref{pro:6.1} that \eqref{FP-8} can
indeed be achieved.
Combining \eqref{eta}--\eqref{FP-8}  gives
\begin{equation}
\label{FP-9} \|D(L_{b}^{-1}K_{b})(z)\|_{{_0}\EE(a)}\le 1/2, \quad z\in
{_0}\BB_{\EE(a_1)}(0,2r_1)
\end{equation}
showing that
$
L_{b}^{-1}K_{b}:{_0}{\overline\BB}_{\EE(a_1)}(0,r_1)\to {_0}\EE(a_1)
$
is a contraction, where ${_0}{\overline\BB}_{\EE(a_1)}(0,r_1)$
denotes the closed ball in ${_0}\EE(a_1)$ with center at $0$ and
radius $r_1$.
\smallskip\\
It remains so show that $L_{b}^{-1}K_{b}$ maps
${_0}{\overline\BB}_{\EE(a_1)}(0,r_1)$ into itself. From
\eqref{FP-9} and the mean value theorem follows
\begin{equation*}
\begin{split}
\|L_{b}^{-1}K_{b}(z)\|_{{_0}\EE(a_1)} &\le
\|L_{b}^{-1}K_{b}(z)-L_{b}^{-1}K_{b}(0)\|_{{_0}\EE(a_1)}
+\|L_{b}^{-1}K_{b}(0)\|_{{_0}\EE(a_1)}\\
&\le r_1/2+ \|L_{b}^{-1}K_{b}(0)\|_{{_0}\EE(a_1)}, \quad z\in
{_0}\overline{\BB}_{\EE(a_1)}(0,r_1).
\end{split}
\end{equation*}
Here we observe that the norm  of
$L_{b}^{-1}K_{b}(0)=L_{b}^{-1}(K(z^\ast)-(0,f^*_d,g^*,g^*_h))$ in
$_{0}\EE(a_1)$ can be made as small as we wish by choosing $a_1$
small enough. We may assume that $a_1$ was already chosen so that
$\|L_{b}^{-1}K_{b}(0)\|_{{_0}\EE(a_1)}\le r_1/2$.
\smallskip\\
(v) We have shown in (iv) that the mapping
\begin{equation*}
L_{b}^{-1}K_{b}: {_0}\overline{\BB}_{\EE(a_1)}(0,r_1) \to
{_0}\overline{\BB}_{\EE(a_1)}(0,r_1)
\end{equation*}
is a contraction. By the contraction mapping theorem $L_{b}^{-1}K_{b}$
has a unique fixed point $\hat z \in
{_0}\overline{\BB}_{\EE(a_1)}(0,r_1) \subset {_0}\EE(a_1) $ and it
follows immediately from \eqref{FP-1}--\eqref{FP-2} that
$\hat z+z^*$ is the (unique) solution of the nonlinear
problem~\eqref{tfbns2} in
${_0}\overline{\BB}_{\EE(a_1)}(z^*,r_1)$. Setting $t_0=a_1$ gives
the assertion in part (a) of the Theorem.
\smallskip\\
(vi)
The proof that $(u,\pi,q,h)$ is analytic in space and
time proceeds exactly in the same way as in steps (vi)--(vii)
of the proof of Theorem 6.3 in \cite{PrSi09a},
with the only difference that here  $g^\ast_h=g^\ast_{h,\lambda,\nu}=0$,
and that the operator $D_\nu$ in formula (6.30) of \cite{PrSi09a}
is to be replaced by $D_{\lambda,\nu}$, defined by
\begin{equation}
\label{D-lambda-nu}
{\mathcal D}_{\lambda,\nu}h:=(\lambda b_{\lambda,\nu}-\nu|\nabla h),
\qquad b_{\lambda,\nu}(t,x):=b(\lambda t,x+t\nu).
\end{equation}
We note that $D_{1,0}=(b|\nabla\cdot)$.
In the same way as in \cite[Lemma 8.2]{PSS07} one obtains that
\begin{equation}
\label{b-lambda-nu}
[(\lambda,\nu)\mapsto b_{\lambda,\nu}]: (1-\delta,
1+\delta)\times\R^n \to \FF_4(a)
\end{equation}
is real analytic.
The remaining arguments are now the same as in
\cite{PrSi09a},
and this completes the proof of Theorem~\ref{th:nonlinear}.
\end{proof}
\noindent
{\bf Proof of Theorem 1.1:}
Clearly, the compatibility conditions of Theorem~\ref{th:1.1} are satisfied
if and only if \eqref{compatibility} is satisfied.
Moreover, the smallness-boundedness condition of Theorem~\ref{th:1.1}
is equivalent to~\eqref{sm}, where we have slightly abused  notation
by using the same symbol for $u_0$ and its transformed version
$\Theta^\ast_{h_0}u_0$.

Theorem~\ref{th:nonlinear}
yields a unique solution
$(v,w,\pi,[\pi],h)\in \EE(t_0)$ which satisfies the
additional regularity properties listed in part (b) of the theorem.
Setting
\begin{equation*}
(u,q)(t,x,y)=(v,w,\pi)(t,x,y-h(t,x)),\quad (t,x,y)\in{\mathcal O},
\end{equation*}
we then conclude that
$(u,q)\in C^\omega({\mathcal O},\R^{n+2})$ and $[q]\in C^\omega(\mathcal M)$.
The regularity properties listed in Remark~\ref{rem:1.2}(a)
are implied by Proposition~\ref{pro:6.1}(a),(c).
Finally, since $\pi(t,x,y)$ is defined
for every $(t,x,y)\in{\mathcal O}$, we can conclude that
$$q(t,\cdot)\in \dot H^{1}_p(\Omega(t))\subset U\!C(\Omega(t))$$
for every $t\in (0,t_0)$.
\hfill{$\square$}
\bigskip
\goodbreak
\section{Appendix}
In this section we state and prove some technical results
that were used above.
\begin{prop}
\label{pro:6.1} Suppose $p>n+3$.
Then the following embeddings  hold:
\begin{itemize}
\item[(a)] $\EE_1(a)\hookrightarrow
BC(J;W^{2-2/p}_p(\dot\R^{n+1},\R^{n+1})) \hookrightarrow
BC(J;BC^1(\dot\R^{n+1},\R^{n+1}))$ and there is a constant $C_0=C_0(p)$
such that
\begin{equation*}
\|u\|_{{_0}BC(J;W^{2-2/p}_p)}+\|u\|_{{_0}BC(J;BC^1)}\le
C_0\|u\|_{{_0}\EE_1(a)}
\end{equation*}
for all $u\in {_0}\EE_1(a)$ and all $a\in (0,\infty)$. \vspace{1mm}
\item[(b)] $\EE_3(a)\hookrightarrow BC(J;BC(\R^n))$ and there
exists a constant $C_0=C_0(p)$ such that
\begin{equation*}
\|g\|_{{_0}BC(J;BC)}\le C_0  \|g\|_{{_0}\EE_3(a)}
\end{equation*}
for all $g\in {_0}\EE_3(a)$ and all $a\in (0,\infty)$.
\vspace{1mm}
\item[(c)] $\FF_4(a)\hookrightarrow
BC(J;W^{1}_p(\R^n))\cap BC(J;BC^1(\R^n))$ and there
exists a constant $C_0=C_0(p)$ such that
\begin{equation*}
\|g\|_{{_0}BC(J;W^1_p)}+\|g\|_{{_0}BC(J;BC^1)}\le C_0\|g\|_{{_0}\FF_4(a)}
\end{equation*}
for all $g\in{_0}\FF_4(a)$ and all $a\in (0,\infty)$. \vspace{1mm}
\item[(d)] $ \EE_4(a) \hookrightarrow BC^1(J;BC^1(\R^n))\cap
BC(J;BC^2(\R^n)) $ and there exists a constant $C_0=C_0(p)$ such that
\begin{equation*}
\|h\|_{{_0}BC^1(J;BC^1)}+\|h\|_{{_0}BC(J;BC^2)} \le
C_0\|h\|_{{_0}\EE_4(a)}
\end{equation*}
for all $h\in {{_0}\EE_4(a)}$ and all $a\in (0,\infty)$.
\vspace{1mm}
\item[(e)]
$\partial_j\in\Li(\EE_4(a),\EE_3(a))\cap\Li(\EE_4(a),\FF_4(a))$
for $j=1,\cdots,n$.
Moreover, for every given $a_0>0$ there is a constant $C_0=C_0(a_0,p)$
such that
\begin{equation*}
\|\partial_j h\|_{\EE_3(a)}+\|\partial_j h\|_{\FF_4(a)}
\le C_0 \|h\|_{\EE_4(a)}
\end{equation*}
for all $h\in \EE_4(a)$ and all $a\in (0,a_0]$.
\end{itemize}
\end{prop}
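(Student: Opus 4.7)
The unifying strategy is to reduce everything to universal estimates on the half-line $\mathbb R_+ = (0,\infty)$. For parts (a)--(d) we work in the ${_0}$-versions of the spaces and extend by zero across $t=0$; the zero-extension is bounded with a constant depending only on $p$, so it suffices to prove the embeddings on $\mathbb R_+$ with universal constants. On $\mathbb R_+$ the key tool is the anisotropic trace / mixed derivative theorem: if $sp>1$ and $X_1\hookrightarrow X_0$, then
\[
W^s_p(\mathbb R_+;X_0)\cap L_p(\mathbb R_+;X_1)\hookrightarrow BC(\overline{\mathbb R_+};(X_0,X_1)_{1-1/(sp),\,p})
\]
with universal constant. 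Using $(W^{r_0}_p,W^{r_1}_p)_{\theta,p}=B^{(1-\theta)r_0+\theta r_1}_{pp}=W^{(1-\theta)r_0+\theta r_1}_p$ at non-integer orders, the target space on $\R^n$ or $\dot\R^{n+1}$ is a Sobolev-Slobodeckij space, to which I then apply the classical Sobolev embedding.

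Carrying this out: for (a), $s=1$ with $(L_p,H^2_p)$ on $\dot\R^{n+1}$ places the trace in $W^{2-2/p}_p$, and $W^{2-2/p}_p(\dot\R^{n+1})\hookrightarrow BC^1$ requires $(2-2/p)-1>(n+1)/p$, i.e.\ $p>n+3$. For (b), with $s=1/2-1/(2p)$ and $X_1=W^{1-1/p}_p(\R^n)$ the interpolation exponent is $(p-3)/(p-1)$ and the trace lies in $W^{(p-3)/p}_p(\R^n)$, which embeds into $BC$ precisely when $p>n+3$. For (c), the analogous computation with $s=1-1/(2p)$ and $X_1=W^{2-1/p}_p$ gives trace in $W^{2-3/p}_p$, whose embedding into $W^1_p\cap BC^1$ is again equivalent to $p>n+3$. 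In each case the hypothesis of the proposition is exactly the critical Sobolev threshold.

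For (d) I would split the two desired embeddings. The $BC(J;BC^2)$ part follows by applying the trace theorem to the subspace $H^1_p(J;W^{2-1/p}_p)\cap L_p(J;W^{3-1/p}_p)$ of $\EE_4(a)$, which puts the trace in $W^{3-2/p}_p\hookrightarrow BC^2$ under the weaker condition $p>n+2$. For the $BC^1(J;BC^1)$ part, differentiating in $t$ sends the first two components of $\EE_4(a)$ into $W^{1-1/(2p)}_p(J;L_p)\cap L_p(J;W^{2-1/p}_p)=\FF_4(a)$, so part (c) applied to $\partial_t h$ finishes the job. Finally, (e) is bookkeeping: $\partial_j$ reduces by one the spatial order in each of the four components of $\EE_4(a)$, and a term-by-term comparison against the components of $\EE_3(a)$ and $\FF_4(a)$ leaves only elementary embeddings in the time variable, whose constants are finite on $[0,a_0]$ (this is the only place where the $a_0$-dependence appears).

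No single deep obstacle is expected. The main thing to get right is the arithmetic: verifying in each of (a)--(c) that the real-interpolation exponent collapses so that the final Sobolev embedding on $\R^n$ (or $\dot\R^{n+1}$) reduces to $p>n+3$, and checking that the interpolation formula really produces the Sobolev--Slobodeckij scale (via $W^s_p=B^s_{pp}$ for $s\notin\ZZ$) at each intermediate step. Once the fractional accounting is done cleanly for the four components of $\EE_4(a)$, the proposition assembles itself from extension, trace, and Sobolev embedding.
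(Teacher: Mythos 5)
Your argument is correct and follows essentially the same route as the paper, whose own proof of (a)--(c) consists of citing precisely this trace-theorem/real-interpolation/Sobolev-embedding scheme from [PSS07, Proposition~6.2] and [EPS03]; your exponent arithmetic (traces in $W^{2-2/p}_p$, $W^{(p-3)/p}_p$, $W^{2-3/p}_p$ and $W^{3-2/p}_p$, each hitting the threshold $p>n+3$ except for the $BC(J;BC^2)$ part of (d)) agrees with the paper's. The only two ingredients the paper makes explicit and you merely assert are the $a$-uniform extension operator -- the paper uses zero extension to $t<0$ \emph{combined with} the higher-order reflection $3\tilde h(2a-t)-2\tilde h(3a-2t)$ past $t=a$, since zero extension across $t=0$ alone does not reduce $[0,a]$ to $\R_+$ -- and, for part (e), the embedding $H^1_p(J;X)\hookrightarrow W^{1-1/2p}_p(J;X)$ with constant depending only on $a_0$, which the paper derives from Hardy's inequality.
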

\begin{proof}
We refer to \cite[Proposition 6.2]{PSS07} for a proof of (a)-(b).
The assertion in (c) can established in the same way,
using that $\FF_4(a)\hookrightarrow BC(J;W^{2-3/p}_p(\R^n))$,
see \cite[Remark 5.3(d)]{EPS03}.
In order to
show that the embedding constant in (d) does not depend on $a\in
(0,a_0]$ we define an extension operator in the following way: for
$h\in {_0}BC^1([0,a];X)$, with $X$ an arbitrary Banach space, we
first set $\tilde h(t):=0$ for $t\le 0$, so that $\tilde h\in
BC^1((-\infty,a];X)$, and then define
\begin{equation}
\label{E-J}
        (\mathcal Eh)(t):=
        \left\{\begin{array}{ll}
              h(t)   & \mbox{if}\quad 0\le t\le a,\\
              &\vspace{-3mm} \\
              3\tilde h(2a-t)-2\tilde h(3a-2t)& \mbox{if}\quad a\le t. \\
              \end{array}
                  \right.
\end{equation}
A moment of reflection shows that $\mathcal E
h\in{_0}BC^{1}([0,\infty);X)$, and that $\mathcal Eh$ is an
extension of $h$. It is evident that the norm of $\mathcal E:
{_0}BC^1([0,a];X)\to {_0}BC^1([0,\infty);X)$ is independent of
$a\in [0,a_0]$. The assertion follows now by the same arguments as
in the proof of \cite[Proposition 6.2]{PSS07}.
\smallskip\\
Let $a_0>0$ be fixed.
In order to establish part (e) it suffices to show
that there is a constant $C_0=C(a_0,p,r)$ such that
\begin{equation}
\label{H1-Wr}
\|g\|_{W^r_p([0,a];X)}\le C_0\|g\|_{H^1_p([0,a];X)},
\quad a\in (0,a],
\end{equation}
where $X$ is an arbitrary Banach space and $r\in [0,1]$.
This follows from Hardy's inequality as follows:
for $r\in (0,1)$ fixed we have
\begin{equation*}
\begin{split}
\frac{1}{2}\langle g\rangle^p_{W^r_p([0,a];X)}
&=\int_0^a\int_s^a\frac{\|g(t)-g(s)\|^p_X}{(t-s)^{1+rp}}\,dt\,ds\\
&=\int_0^a\int_0^{a-s}\frac{\|g(s+\tau)-g(s)\|^p_X}{\tau^{1+rp}}\,d\tau\,ds\\
&\le \int_0^a\int_0^{a-s}\frac{1}{\tau^{1+rp}}
\left(\int_0^\tau \|\partial g(s+\sigma)\|_X\,d\sigma\right)^p\,d\tau\,ds\\
&\le c(r,p)\int_0^a\int_0^{a-s}\frac{1}{\tau^{1-(1-r)p}}
\|\partial g(s+\tau)\|^p_X\,d\tau\,ds\\
&= c(r,p)\int_0^a \frac{1}{\tau^{1-(1-r)p}}\int_0^{a-\tau}
\|\partial g(s+\tau)\|^p_X\,ds\,d\tau\\
&\le c(r,p)\int_0^a \frac{1}{\tau^{1-(1-r)p}}\,d\tau\;
\|\partial g\|^p_{L_p([0,a];X)}
\end{split}
\end{equation*}
where $\partial g$ is the derivative of $g$,
and this readily yields \eqref{H1-Wr}.
\end{proof}
Our next result will be important in order to derive estimates for
the nonlinearities in~\eqref{tfbns2}.
\begin{lem}
\label{le:2} Suppose $p>n+3$. Let $a_0\in (0,\infty)$ be given.
Then
\begin{itemize}
\item[(a)] $\EE_3(a)$ is a multiplication algebra and we have the
following estimate
\begin{equation}
\label{algebra} \|g_1g_2\|_{\EE_3(a)} \le
(\|g_1\|_\infty+\|g_1\|_{\EE_3(a)})
(\|g_2\|_\infty+\|g_2\|_{\EE_3(a)})
\end{equation}
for all $(g_1,g_2)\in\EE_3(a)\times\EE_3(a)$ and all $a>0$.
\vspace{1mm}
\item[(b)] There exists a constant $C_0=C_0(a_0,p)$ such that
\begin{equation}
\label{M1}
\begin{split}
\|g_1g_2\|_{{_0}\EE_3(a)} &\le C_0
(\|g_1\|_\infty+\|g_1\|_{\EE_3(a)})\| g_2\|_{{_0}\EE_3(a)}
\end{split}
\end{equation}
for all $ (g_1,g_2)\in\EE_3(a)\times{_0}\EE_3(a)$
and all $a\in (0,a_0]$. \vspace{1mm}
\item[(c)] There exists a constant $C_0=C_0(a_0,p)$ such that
\begin{equation}
\label{M2}
\|  g\partial_j h\|_{{_0}\EE_3(a)}
\le C_0\|g\|_{\EE_3(a)}\| h\|_{{_0}\EE_4(a)},\quad j=1,\cdots,n,
\end{equation}
for all $(g,h)\in \EE_3(a)\times {_0}\EE_4(a)$
and $a\in (0,a_0]$. \vspace{1mm}
\item[(d)] Suppose $(g,\psi)\in \EE_3(a)\times\EE_3(a)$ and let
$\beta(t,x):=\sqrt{1+\psi^2(t,x)}$. Then
$\displaystyle\frac{g}{\beta^k}\in\EE_3(a)$ for $k\in\N$ and the
following estimate holds
\begin{equation}
\label{quotient} \left\|\frac{g}{\beta^k}\right\|_{\EE_3(a)} \le
(1+\|\psi\|_{\EE_3(a)})^k(\|g\|_\infty+\|g\|_{\EE_3(a)}).
\end{equation}
\end{itemize}
\end{lem}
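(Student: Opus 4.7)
The strategy is to reduce all four estimates to pointwise inequalities, exploiting that Proposition~\ref{pro:6.1}(b) yields the embedding $\EE_3(a)\hookrightarrow BC(J;BC(\R^n))$ when $p>n+3$, so that both factors in each product can be controlled in $L_\infty$.

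For (a), the algebraic identity $g_1g_2(\xi)-g_1g_2(\zeta)=g_1(\xi)(g_2(\xi)-g_2(\zeta))+g_2(\zeta)(g_1(\xi)-g_1(\zeta))$ gives the pointwise bound $|g_1g_2(\xi)-g_1g_2(\zeta)|\le\|g_1\|_\infty|g_2(\xi)-g_2(\zeta)|+\|g_2\|_\infty|g_1(\xi)-g_1(\zeta)|$. Inserting this into the defining Slobodeckii double integrals of $W^{1/2-1/2p}_p(J;L_p)$ (with $\xi=(t,x)$, $\zeta=(s,x)$) and of $W^{1-1/p}_p(\R^n)$ (with $\xi=(t,x)$, $\zeta=(t,y)$), combined with $\|g_1g_2\|_{L_p(J;L_p)}\le\|g_1\|_\infty\|g_2\|_{L_p(J;L_p)}$, yields (a) after expansion of the product on the right hand side.

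For (b), the identical pointwise argument works on $J=[0,a]$ and produces absolute constants: the only extra input needed to absorb $\|g_2\|_\infty$ into $\|g_2\|_{{_0}\EE_3(a)}$ is Proposition~\ref{pro:6.1}(b), which supplies this embedding with constant independent of $a$. One also checks $(g_1g_2)(0)=0$ from $g_2(0)=0$ to ensure the product lies in ${_0}\EE_3(a)$. For (c), Proposition~\ref{pro:6.1}(e) yields $\partial_j$ as a bounded operator $\EE_4(a)\to\EE_3(a)$ with constant uniform in $a\le a_0$, and since spatial differentiation commutes with the $t=0$ traces the same bound holds for the corresponding ${_0}$-spaces. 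Applying (b) with $g_2=\partial_jh$ reduces the task to absorbing $\|g\|_\infty$ into $\|g\|_{\EE_3(a)}$ uniformly in $a$; this is achieved by a reflection-type extension operator $\EE_3(a)\to\EE_3(a_0)$ modelled on \eqref{E-J}, followed by the (now $a$-independent) embedding $\EE_3(a_0)\hookrightarrow BC$.

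For (d), the elementary inequality $|\sqrt{1+u^2}-\sqrt{1+v^2}|\le|u-v|$, obtained by rationalising the difference, gives $|1/\beta(\xi)-1/\beta(\zeta)|\le|\psi(\xi)-\psi(\zeta)|$ because $\beta\ge 1$, and telescoping together with $1/\beta\le 1$ yields $|1/\beta^k(\xi)-1/\beta^k(\zeta)|\le k|\psi(\xi)-\psi(\zeta)|$. The splitting used in (a), together with $|1/\beta^k|\le 1$ for the $L_p(J;L_p)$ part, then produces $\|g/\beta^k\|_{\EE_3(a)}\le\|g\|_{\EE_3(a)}+k\|g\|_\infty\|\psi\|_{\EE_3(a)}$, which is dominated by $(\|g\|_\infty+\|g\|_{\EE_3(a)})(1+k\|\psi\|_{\EE_3(a)})\le(\|g\|_\infty+\|g\|_{\EE_3(a)})(1+\|\psi\|_{\EE_3(a)})^k$ via Bernoulli's inequality for $k\ge 1$.

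The main technical point throughout is the $a$-uniformity of the constants in (b) and (c): the pointwise estimates themselves are insensitive to the length of $J$, but controlling $\|g\|_\infty$ by $\|g\|_{\EE_3(a)}$ for a function without vanishing time trace requires the reflection-extension device from the proof of Proposition~\ref{pro:6.1}(d). The zero-trace case is easier and is handled directly by Proposition~\ref{pro:6.1}(b).
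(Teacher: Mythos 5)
Parts (a), (b) and (d) of your proposal are sound: the product-rule splitting inside the Slobodeckij seminorms, the absorption of $\|g_2\|_\infty$ via Proposition~\ref{pro:6.1}(b) (legitimate because $g_2$ has zero time trace, so the embedding constant is uniform in $a$), and the pointwise bound $|1/\beta^k(\xi)-1/\beta^k(\zeta)|\le k|\psi(\xi)-\psi(\zeta)|$ combined with Bernoulli's inequality reproduce the stated estimates. This is essentially the argument the paper imports from \cite{PSS07} for (a)--(b) and gives explicitly for (d).

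Part (c), however, contains a genuine gap. Your route through part (b) yields $C_0(\|g\|_\infty+\|g\|_{\EE_3(a)})\|h\|_{{_0}\EE_4(a)}$, and you then claim that $\|g\|_\infty$ can be absorbed into $\|g\|_{\EE_3(a)}$ uniformly in $a\in(0,a_0]$ by a reflection-type extension. This is impossible: take $g(t,x)=\phi(x)$ independent of $t$ with $0\ne\phi\in W^{1-1/p}_p(\R^n)$. Then $\|g\|_{\EE_3(a)}\sim a^{1/p}\big(\|\phi\|_{L_p}+\|\phi\|_{W^{1-1/p}_p}\big)\to 0$ as $a\to 0$, while $\|g\|_\infty=\|\phi\|_{L_\infty}$ is fixed, so no bound $\|g\|_\infty\le C_0\|g\|_{\EE_3(a)}$ with $C_0$ independent of $a$ can hold. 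The extension \eqref{E-J} is defined only on the zero-trace class (it sets $\tilde h\equiv 0$ for $t\le 0$), and the counterexample shows that no extension operator whatsoever can rescue the claim, since any extension of $g$ still dominates $\|\phi\|_\infty$ in sup-norm. The paper's proof of (c) avoids the sup-norm of $g$ entirely by an \emph{asymmetric} splitting: in the time-Slobodeckij seminorm it writes $g(t)\partial_jh(t)-g(s)\partial_jh(s)=(g(t)-g(s))\partial_jh(t)+g(s)(\partial_jh(t)-\partial_jh(s))$, bounds the first term by $\langle g\rangle_{W^r_p(J;L_p)}\|\partial_jh\|_{{_0}BC(J;L_\infty)}$, and for the second uses $\partial_jh(t)-\partial_jh(s)=\int_s^t\partial_t\partial_jh\,d\tau$ together with H\"older's inequality, so that $g$ enters only through $\|g\|_{L_p(J;L_p)}$ while the sup-norm falls on $\partial_t\partial_jh\in L_p(J;L_\infty)$, controlled uniformly in $a$ by the $H^1_p(J;W^{2-1/p}_p)$ component of ${_0}\EE_4(a)$. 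As written, your argument proves only the weaker estimate with $\|g\|_\infty+\|g\|_{\EE_3(a)}$ on the right-hand side; (c) needs to be redone along the paper's lines.
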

\begin{proof}
The assertions in (a)-(b) follow from (the proof of) Proposition
6.6.(ii) and (iv) in \cite{PSS07}.
\medskip\\
(c) To economize our notation we set $r=1/2-1/2p$ and
$\theta=1-1/p$.
\smallskip\\
Suppose that $(g,h)\in \EE_3(a)\times {_0}\EE_4(a)$.
We first observe that
\begin{equation*}
\begin{split}
\|g\partial_j h\|_{W^r_p(J;L_p)}
&\le \big(\|g\|_{L_p(J;L_p)}+\langle g\rangle_{W^r_p(J;L_p}\big)
\|\partial_j h\|_{{_0}BC(J;L_\infty)}
\\
&\quad +\left(\int_0^a\int_0^a
\|g(s)(\partial_jh(t)-\partial_jh(s))\|^p_{L_p}\;
\frac{dt\,ds}{|t-s|^{1+rp}}\right)^{1/p}.
\end{split}
\end{equation*}
Using H\"older's inequality,
and the fact that $(1-r-1/p)=r>0$, we obtain the estimate
\begin{equation}
\label{E3-A}
\begin{split}
&\int_0^a\int_0^a
\|g(s)(\partial_jh(t)-\partial_jh(s))\|^p_{L_p}\;
\frac{dt\,ds}{|t-s|^{1+rp}} \\
&\le \int_0^a\int_0^a
\|g(s)\|^p_{L_p}
\left(\left|\int_s^t\|\partial_t\partial_jh(\tau)\|_{L_\infty}d\tau\right|\right)^p\;
\frac{dt\,ds}{|t-s|^{1+rp}}\\
&\le
\int_0^a \|g(s)\|^p_{L_p}\left(\int_0^a \frac{dt}{|t-s|^{1-(1-r-1/p)p}}\right) ds
\int_0^a \|\partial_t\partial_j h(\tau)|^p_{L_p}d\tau
\\
&\le
C_0(a_0,p)\|g\|^p_{L_p(J;L_p)}\|\partial_t\partial_j h\|^p_{L_p(J;L_\infty)}
\end{split}
\end{equation}
for $a\in (0,a_0]$. Hence we conclude that
\begin{equation}
\label{E3-B}
\begin{split}
\|g\partial_j h\|_{{_0}W^r_p(J;L_p)}
&\le C_0\|g\|_{W^r_p(J;L_p)}
\big(\|\partial_j h\|_{{_0}BC(J;L_\infty)}+\|\partial_t\partial_j h\|_{L_p(J;L_\infty)}\big)\\
&\le C_0
\|g\|_{\EE_3(a)}\|h\|_{{_0}\EE_4(a)}
\end{split}
\end{equation}
uniformly in $a\in (0,a_0]$. It is easy  to verify that
\begin{equation}
\label{E3-C}
\begin{split}
\|g\partial_j h\|_{L_p(J;W^\theta_p)}
&\le \|g\|_{L_p(J;W^\theta_p)}\|\partial_j h\|_{{_0}BC(J;L_\infty)}
+\|g\|_{L_p(J;L_\infty)}\|\partial_j h\|_{{_0}BC(J;W^\theta_p)} \\
&\le C_0 \|g\|_{\EE_3(a)}\|h\|_{{_0}\EE_4(a)}.
\end{split}
\end{equation}
Combining the estimates \eqref{E3-B}--\eqref{E3-C} yields \eqref{M2}.
\medskip\\
(d) As in the proof of Proposition 6.6.(v) in \cite{PSS07} we
obtain
\begin{equation*}
\begin{split}
 \|{g}/{\beta}\|_{W^r_p(J;L_p)}
&\le \|{1}/{\beta}\|_\infty(\| g\|_{L_p(J;L_p)}+\langle
g\rangle_{W^r_p(J;L_p)})
+\| g\|_\infty \langle 1/\beta\rangle_{W^r_p(J;L_p)} \\
&\le (1+\langle 1/\beta\rangle_{W^r_p(J;L_p)}) (\| g\|_\infty+\|
g\|_{W^r_p(J;L_p)}).
\end{split}
\end{equation*}
Thus it remains to estimate the term $ \langle
{1}/{\beta}\rangle_{W^r_p(J;L_p)}$. Using that
$\beta^2(t,x)-\beta^2(s,x)=\psi^2(t,x)-\psi^2(s,x)$ one easily
verifies that
\begin{equation*}
\begin{split}
\left|\frac{1}{\beta(s,x)}-\frac{1}{\beta(t,x)}\right| =
\left|\frac{\beta^2(t,x)-\beta^2(s,x)}{\beta(s,x)\beta(t,x)(\beta(t,x)+\beta(s,x))}\right|
\le |\psi(t,x)-\psi(s,x)|
\end{split}
\end{equation*}
and this yields $\langle 1/\beta \rangle_{W^r_p(J;L_p)} \le
\langle \psi \rangle_{W^r_p(J;L_p)}$. Consequently,
\begin{equation*}
\begin{split}
\|{g}/{\beta}\|_{W^r_p(J;L_p)} &\le (1+\|\psi\|_{W^r_p(J:L_p)})
(\|g\|_\infty+\|g\|_{W^r_p(J;L_p)}).
\end{split}
\end{equation*}
A similar argument shows that
\begin{equation*}
\begin{split}
\|{g}/{\beta}\|_{L_p(J;W^\theta_p)} &\le
(1+\|\psi\|_{L_p(J;W^\theta_p)})
(\|g\|_\infty+\|g\|_{L_p(J;W^\theta_p)}).
\end{split}
\end{equation*}
Combining the last two estimates gives \eqref{quotient} for $k=1$.
The general case then follows by induction.
\end{proof}
\goodbreak
\begin{cor}
\label{cor-E-3}
Suppose $p>n+3$. Let $a_0\in (0,\infty)$
and  $k\in\N$ with $k\ge 1$ be given.
\begin{itemize}
\item[(a)]
There exists a constant $C_0=C_0(a_0,p,k)$ such that
\begin{equation*}
\|(g_1\cdots g_k)\ba g\|_{{_0}\EE_3(a)}
\le C_0\prod_{i=1}^k\big(\|g_i\|_\infty+\|g_i\|_{\EE_3(a)}\big)
\|\ba g\|_{{_0}\EE_3(a)}
\end{equation*}
for all functions $g_i\in\EE_3(a)$, $1\le i\le k$, $\ba g\in {_0}\EE_3(a)$,
and all $a\in (0,a_0]$.
\vspace{2mm}
\item[(b)]
There exists a constant $C_0=C_0(a_0,p,k)$ such that
\begin{equation*}
\label{E3-multi}
\begin{split}
\hspace{1cm}&\|g (\partial_{\ell_1}h\cdots \partial_{\ell_k}h\partial_j\ba h)\|_{{_0}\EE_3(a)} \\
&\le C_0\big(\|\nabla h\|^k_\infty \!+\!
\|h\|_{\EE_4(a)}\|\nabla h\|^{k-1}_\infty
+\|\nabla h\|^k_{BC(J;W^{1-1/p}_p)}\big)\|g\|_{\EE_3(a)}\|\ba h\|_{{_0}\EE_4(a)} \\
&\le C_0\big(\|\nabla h\|^k_{BC(J;BC^1)}+
\|h\|_{\EE_4(a)}\|\nabla h\|^{k-1}_\infty\big)
\|g\|_{\EE_3(a)}\|\ba h\|_{{_0}\EE_4(a)} \\
\end{split}
\end{equation*}
for  $h\in\EE_4(a)$, $\ba h\in{_0}\EE_4(a)$,
$a\in (0,a_0]$, $1\le j\le n$, and $\ell_i\in \{1,\cdots n\}$ with $i=1,\cdots,k$.
\end{itemize}
\end{cor}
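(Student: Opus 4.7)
The plan is to derive both parts of Corollary~\ref{cor-E-3} directly from Lemma~\ref{le:2}. For part~(a), I proceed by induction on $k$. The base case $k=1$ is precisely Lemma~\ref{le:2}(b). For the inductive step, iterating the multiplication-algebra estimate Lemma~\ref{le:2}(a) gives
\begin{equation*}
\|g_1\cdots g_k\|_{\EE_3(a)}\le \prod_{i=1}^k\big(\|g_i\|_\infty+\|g_i\|_{\EE_3(a)}\big),
\end{equation*}
while the trivial bound $\|g_1\cdots g_k\|_\infty\le\prod_i\|g_i\|_\infty$ satisfies the same upper estimate. A single application of Lemma~\ref{le:2}(b) to the pair $(g_1\cdots g_k,\ba g)$ then yields the claim.

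For part~(b), the first step is to apply Lemma~\ref{le:2}(c) to peel off the factor $\partial_j\ba h$: setting $G:=g\,\partial_{\ell_1}h\cdots\partial_{\ell_k}h$, we obtain
\begin{equation*}
\|G\cdot\partial_j\ba h\|_{{_0}\EE_3(a)}\le C_0\|G\|_{\EE_3(a)}\|\ba h\|_{{_0}\EE_4(a)},
\end{equation*}
so the entire task reduces to estimating $\|G\|_{\EE_3(a)}$. I split $\EE_3(a)=W^{1/2-1/2p}_p(J;L_p)\cap L_p(J;W^{1-1/p}_p)$ and treat the two components separately.

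For the $L_p(J;W^{1-1/p}_p)$ part, the key observation is that $W^{1-1/p}_p(\R^n)$ is a Banach algebra (since $p>n+3$ implies $(1-1/p)p>n$). Applying this pointwise in $t$ and distributing the $L_p$-in-time norm onto the factor $g$ while controlling the $\partial_{\ell_i}h$ factors in $BC(J;W^{1-1/p}_p)$ produces
\begin{equation*}
\|G\|_{L_p(J;W^{1-1/p}_p)}\le C_0\|g\|_{\EE_3(a)}\|\nabla h\|^k_{BC(J;W^{1-1/p}_p)},
\end{equation*}
accounting for the third term on the right-hand side of the first claimed bound.

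The main obstacle is the $W^{1/2-1/2p}_p(J;L_p)$ seminorm, where no algebraic product estimate is directly available and one must mimic the Hardy-type argument from the proof of Lemma~\ref{le:2}(c). Using the telescoping identity
\begin{equation*}
\begin{split}
G(t)-G(s)&=(g(t)-g(s))\prod_i\partial_{\ell_i}h(t)\\
&\quad+g(s)\sum_j\Big[\prod_{i<j}\partial_{\ell_i}h(t)\Big](\partial_{\ell_j}h(t)-\partial_{\ell_j}h(s))\Big[\prod_{i>j}\partial_{\ell_i}h(s)\Big],
\end{split}
\end{equation*}
I substitute into the Sobolev-Slobodeckij seminorm over $J\times J$. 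The first summand contributes a term bounded by $C_0\|g\|_{\EE_3(a)}\|\nabla h\|_\infty^k$. For each term in the second summand, writing $\partial_{\ell_j}h(t)-\partial_{\ell_j}h(s)=\int_s^t\partial_\tau\partial_{\ell_j}h(\tau)\,d\tau$ and applying the H\"older-Fubini chain already carried out in~\eqref{E3-A} yields a bound controlled by $C_0\|g\|_{L_p(J;L_p)}\|\nabla h\|_\infty^{k-1}\|\partial_t\nabla h\|_{L_p(J;L_\infty)}$; here the embedding $W^{2-1/p}_p(\R^n)\hookrightarrow L_\infty(\R^n)$ for $p>n+3$, combined with $h\in\EE_4(a)\hookrightarrow H^1_p(J;W^{2-1/p}_p)$, gives $\|\partial_t\nabla h\|_{L_p(J;L_\infty)}\le C_0\|h\|_{\EE_4(a)}$. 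Summing the three contributions delivers the first inequality in part~(b); the second, coarser form follows by using $\|\nabla h\|_\infty\le\|\nabla h\|_{BC(J;BC^1)}$ together with a Moser-type refinement of the spatial estimate that replaces $\|\nabla h\|^k_{BC(J;W^{1-1/p}_p)}$ by $\|\nabla h\|_{BC(J;W^{1-1/p}_p)}\|\nabla h\|^{k-1}_\infty$, and then invoking $\|\nabla h\|_{BC(J;W^{1-1/p}_p)}\le C_0\|h\|_{\EE_4(a)}$ via Proposition~\ref{pro:6.1} applied to $\nabla h\in\FF_4(a)^n$.
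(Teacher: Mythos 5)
Your proof is correct, and it runs on exactly the same machinery as the paper's: iteration of \eqref{M1} for part (a), and for part (b) the H\"older--Fubini computation of \eqref{E3-A}, the product rule in time together with $\|\partial_t\nabla h\|_{L_p(J;L_\infty)}\le C\|h\|_{\EE_4(a)}$, and the multiplication-algebra property of $W^{1-1/p}_p(\R^n)$ for the spatial part. The only organizational difference is in part (b): the paper keeps $\partial_j\ba h$ inside the product, i.e.\ it applies the first line of \eqref{E3-B} to the pair $\big(g,\,\partial_{\ell_1}h\cdots\partial_{\ell_k}h\,\partial_j\ba h\big)$ and then expands $\partial_t$ of the second factor by Leibniz, whereas you first peel off $\partial_j\ba h$ via Lemma~\ref{le:2}(c) and then estimate $\|g\,\partial_{\ell_1}h\cdots\partial_{\ell_k}h\|_{\EE_3(a)}$ by telescoping the difference $G(t)-G(s)$. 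The two groupings are interchangeable here because Lemma~\ref{le:2}(c) is itself proved by the same \eqref{E3-A} argument; your telescoping identity is just the explicit form of the Leibniz expansion the paper invokes. Your reading of the passage to the second, coarser inequality (Moser-type refinement $\|\nabla h\|_{BC(J;W^{1-1/p}_p)}\|\nabla h\|_\infty^{k-1}$ followed by $\|\nabla h\|_{BC(J;W^{1-1/p}_p)}\le C_0\|h\|_{\EE_4(a)}$ via Proposition~\ref{pro:6.1}) is the correct interpretation of the paper's terse appeal to Sobolev embedding, since $BC^1\not\hookrightarrow W^{1-1/p}_p$ would make a direct comparison fail.
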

\begin{proof}
(a) follows from \eqref{M1} by iteration.
\smallskip\\
(b) The first line in~\eqref{E3-B} shows that
\begin{equation*}
\begin{split}
&\|g (\partial_{\ell_1}h\cdots \partial_{\ell_k}h\partial_j
\ba h)\|_{W^r_p(J;L_p)} \\
&\le C_0\|g\|_{W^r_p(J;L_p)}(\|\partial_{\ell_1}h\cdots \partial_{\ell_k}h
\partial_j\ba h\|_{{_0}BC(J;L_\infty)} +
\|\partial_t(\partial_{\ell_1}h\cdots \partial_{\ell_k}h
\partial_j\ba h)\|_{L_p(J;L_\infty)}).
\end{split}
\end{equation*}
Next we note that
\begin{equation*}
\|\partial_{\ell_1}h\cdots\partial_{\ell_k}h
(\partial_t\partial_j\ba h)\|_{L_p(J;L_\infty)}
\le \|\nabla h\|^{k}_\infty\|\partial_t\partial_j\ba h\|_{L_p(J;L_\infty)},
\end{equation*}
and
\begin{equation*}
\|\partial_{\ell_1}h\cdots(\partial_t\partial_{\ell_i}h)\cdots \partial_{\ell_k}h
\partial_j\ba h\|_{L_p(J;L_\infty)}
\!\le \|\partial_t\partial_{\ell_i}h\|_{L_p(J;L_\infty)}
\|\nabla h\|^{k-1}_\infty \|\partial_j\ba h\|_{{_0}BC(J;L_\infty)}.
\end{equation*}
Proposition 6.1(d) now implies the assertion
for $\|\cdot\|_{W^r_p(J;L_p)}$.
On the other hand we have by \eqref{E3-C}
for $\theta=1-1/p$
\begin{equation*}
\begin{split}
&\|g (\partial_{\ell_1}h\cdots \partial_{\ell_k}h\partial_j
\ba h)\|_{L_p(J;W^\theta_p)} \\
&\le \|g\|_{L_p(J;W^\theta_p)}
\|\partial_{\ell_1}h\cdots \partial_{\ell_k}h\partial_j
\ba h\|_{\infty}
+\|g\|_{L_p(J;L_\infty)}\|\partial_{\ell_1}h\cdots \partial_{\ell_k}h\partial_j
\ba h\|_{{_0}BC(J;W^\theta_p)} \\
&\le
C_0\|g\|_{\EE_3(a)}\big(\|\nabla h\|^k_\infty
+\|\nabla h\|^k_{BC(J;W^{1-1/p}_p)}\big)\|h\|_{{_0}\EE_4(a)}
\end{split}
\end{equation*}
since $W^\theta_p(\R^n)$ is a multiplication algebra.
The last inequality then follows from Sobolov's embedding theorem.
\end{proof}
\begin{rem}
\label{remark-E-3}
It can be shown that the estimate in~\eqref{M2} can be
improved as follows:
For every $a_0\in (0,\infty)$ there is a constant
$C_0=C_0(a_0,p)>0$ and a constant $\theta=\theta(p)>0$ such that
\begin{equation*}
\label{M2-b}
\|  g\partial_j h\|_{{_0}\EE_3(a)}
\le C_0\, a^\theta\|g\|_{\EE_3(a)}\| h\|_{{_0}\EE_4(a)}
\end{equation*}
holds for all $(g,h)\in \EE_3(a)\times {_0}\EE_4(a)$
and $a\in (0,a_0]$.
In the same way, the constant $C_0$ in
Corollary~\ref{cor-E-3}(b) can be replaced by $C_0 a^\theta$.
\end{rem}
\begin{lem}
\label{le:3} Suppose $p>n+3$. Let $a_0\in (0,\infty)$ be given.
Then
\begin{itemize}
\item[(a)] $\FF_4(a)$ is a multiplication algebra and we have the
estimate
\begin{equation*}
\label{F-algebra}
\begin{split}
\|g_1g_2\|_{\FF_4(a)}
&\le C_a\|g_1\|_{\FF_4(a)}\|g_2\|_{\FF_4(a)}
\end{split}
\end{equation*}
for all $(g_1,g_2)\in\FF_4(a)\times\FF_4(a)$,
where the constant $C_a$ depends on $a$.
 \vspace{1mm}
\item[(b)] There exists a constant $C_0=C_0(a_0,p)$ such that
\begin{equation}
\label{F-M1}
\begin{split}
\|g_1g_2\|_{{_0}\FF_4(a)} &\le C_0
(\|g_1\|_\infty+\|g_1\|_{\FF_4(a)})\| g_2\|_{{_0}\FF_4(a)}
\end{split}
\end{equation}
for all $ (g_1,g_2)\in\FF_4(a)\times{_0}\FF_4(a)$
and all $a\in (0,a_0]$. \vspace{1mm}
\item[(c)] There exists a constant $C_0=C_0(a_0,p)$ such that
\begin{equation}
\label{F-M2}
\|  g\partial_j h\|_{{_0}\FF_4(a)} \le
C_0 (\|g\|_\infty+\|g\|_{\FF_4(a)})\| h\|_{{_0}\EE_4(a)},\quad j=1,\cdots,n,
\end{equation}
for all $(g,h)\in \FF_4(a)\times {_0}\EE_4(a)$
and $a\in (0,a_0]$. \vspace{1mm}
\end{itemize}
\end{lem}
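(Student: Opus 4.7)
The plan is to prove Lemma \ref{le:3} in close analogy with Lemma \ref{le:2}, exploiting the structure $\FF_4(a) = W^{1-1/2p}_p(J;L_p(\R^n)) \cap L_p(J;W^{2-1/p}_p(\R^n))$. For $p>n+3$ the Sobolev index $2-1/p$ exceeds $n/p$, so $W^{2-1/p}_p(\R^n)$ is a multiplication algebra; Proposition~\ref{pro:6.1}(c) provides the embedding $\FF_4(a) \hookrightarrow BC(J; BC^1(\R^n))$ with a constant that is uniform in $a$ when restricted to ${_0}\FF_4(a)$. The proofs of (a) and (b) rest on a Moser--Kato--Ponce product estimate in the spatial variable and the standard decomposition $g_1g_2(t)-g_1g_2(s) = g_1(t)(g_2(t)-g_2(s)) + (g_1(t)-g_1(s))g_2(s)$ for the temporal Slobodeckij seminorm.

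For part (a), I estimate the spatial part using
\begin{equation*}
\|g_1(t) g_2(t)\|_{W^{2-1/p}_p} \le C\big(\|g_1(t)\|_\infty \|g_2(t)\|_{W^{2-1/p}_p} + \|g_1(t)\|_{W^{2-1/p}_p}\|g_2(t)\|_\infty\big),
\end{equation*}
integrate in time, and absorb the $L_\infty$-in-$x$ norms into $\|g_i\|_{BC(J;L_\infty)}$, which is finite with an $a$-dependent embedding constant. For the temporal Slobodeckij norm with $r=1-1/2p$, the decomposition above yields
\begin{equation*}
\langle g_1 g_2\rangle_{W^{r}_p(J;L_p)} \le \|g_1\|_{BC(J;L_\infty)}\langle g_2\rangle_{W^{r}_p(J;L_p)} + \|g_2\|_{BC(J;L_\infty)}\langle g_1\rangle_{W^{r}_p(J;L_p)},
\end{equation*}
together with the easy bound $\|g_1 g_2\|_{L_p(J;L_p)} \le \|g_1\|_{BC(J;L_\infty)}\|g_2\|_{L_p(J;L_p)}$. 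Combining these gives~(a), with a constant $C_a$ that depends on $a$ only through the non-uniform embedding $\FF_4(a) \hookrightarrow BC(J;L_\infty)$.

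For part (b) the decisive point is that $g_2 \in {_0}\FF_4(a)$: Proposition~\ref{pro:6.1}(c) then gives $\|g_2\|_{BC(J;L_\infty)} \le C_0\|g_2\|_{{_0}\FF_4(a)}$ with a constant $C_0$ \emph{independent} of $a \in (0,a_0]$. Replacing $\|g_2\|_{BC(J;L_\infty)}$ by this uniform bound, and substituting the separately supplied $\|g_1\|_\infty$ in place of the problematic $\|g_1\|_{BC(J;L_\infty)}$, both the spatial and temporal contributions from step (a) combine to give~\eqref{F-M1}, uniformly in $a \in (0,a_0]$. Part (c) then follows at once: since $h \in {_0}\EE_4(a)$ has $h(0)=0$, we get $\partial_j h|_{t=0}=0$, so $\partial_j h \in {_0}\FF_4(a)$, and Proposition~\ref{pro:6.1}(e) yields $\|\partial_j h\|_{\FF_4(a)} \le C_0 \|h\|_{\EE_4(a)}$ uniformly in $a \in (0,a_0]$; applying (b) with $g_1=g$ and $g_2=\partial_j h$ completes the proof.

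The main technical point to verify is the Moser--Kato--Ponce product rule in $W^{s}_p(\R^n)$ with $s = 2-1/p \in (1,2)$ and the $L_\infty$-product structure; this is a standard result that can be deduced from the Slobodeckij double integral by a symmetric splitting, or equivalently from Littlewood--Paley theory (see, e.g., Runst--Sickel). The rest of the argument is a routine adaptation of the computations in the proof of Lemma~\ref{le:2}, with $W^{1/2-1/2p}_p$ replaced by $W^{1-1/2p}_p$ in time and $W^{1-1/p}_p$ replaced by $W^{2-1/p}_p$ in space; in particular, as in step~(c) of that lemma, one must verify that extension by zero of a function in ${_0}W^{1-1/2p}_p(J;L_p)$ to $[0,\infty)$ preserves the norm with a universal constant, which is standard since $1-1/2p > 1/p$ forces the trace at $0$ to vanish.
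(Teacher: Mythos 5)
Your proposal is correct, and for the temporal Slobodeckij seminorm (bilinear splitting of $g_1g_2(t)-g_1g_2(s)$ plus the $a$-uniform embedding ${_0}\FF_4(a)\hookrightarrow BC(J;L_\infty)$ from Proposition~\ref{pro:6.1}(c)) and for part (c) (take $g_2=\partial_jh$ and invoke Proposition~\ref{pro:6.1}(e)) it coincides with what the paper does. The difference lies in how the spatial component $L_p(J;W^{2-1/p}_p)$ is handled. The paper's proof begins by replacing the norm of $\FF_4(a)$ with the equivalent norm \eqref{norm}, $\|g\|_{W^{1-1/2p}_p(J;L_p)}+\sum_i\|\partial_i g\|_{L_p(J;W^{1-1/p}_p)}$; after applying the Leibniz rule to $\partial_i(g_1g_2)$ one then only needs the elementary product estimate $\|uv\|_{W^{1-1/p}_p}\le\|u\|_{W^{1-1/p}_p}\|v\|_\infty+\|u\|_\infty\|v\|_{W^{1-1/p}_p}$ in the multiplication algebra $W^{1-1/p}_p(\R^n)\hookrightarrow L_\infty(\R^n)$, with every factor of $g_1$ dominated by $\|g_1\|_\infty+\|g_1\|_{\FF_4(a)}$ and every factor of $g_2$ by the uniform ${_0}BC(J;L_\infty)\cap{_0}BC(J;W^{1-1/p}_p)$ bounds. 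You instead invoke the Moser--Kato--Ponce estimate directly in $W^{2-1/p}_p(\R^n)$ with $s=2-1/p\in(1,2)$. That works and yields the same constant structure, but note that for $s>1$ this estimate does \emph{not} follow from a "symmetric splitting of the Slobodeckij double integral" as you suggest: one must first differentiate (Leibniz) to reduce to order $s-1\in(0,1)$ and then control the resulting lower-order products --- which is in effect exactly the reduction the paper's equivalent norm performs --- or else use Littlewood--Paley/paraproducts as in Runst--Sickel. So your route trades the paper's elementary, self-contained computation for an appeal to a heavier (though standard) harmonic-analysis fact; both are valid, and your observation that the only $a$-dependence in (a) enters through the non-uniform embedding $\FF_4(a)\hookrightarrow BC(J;L_\infty)$, which is restored to uniformity in (b) because $g_2$ has vanishing trace, is precisely the point of the lemma.
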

\begin{proof}
 Here we equip $\FF_4(a)$ with the (equivalent) norm
 \begin{equation}
\label{norm}
\|g\|_{\FF_4(a)}=\|g\|_{W^{1-1/2p}_p(J;L_p)}+
\sum_{i=1}^n\|\partial_i g\|_{L_p(J;W^{1-1/p}_p)}\;.
\end{equation}
(a) This follows from Proposition ~\ref{pro:6.1}(c)
by similar arguments as in the proof of
Proposition 6.6(ii) and (iv) in \cite{PSS07}.
\smallskip\\
(b) It follows from part (a) and Proposition~\ref{pro:6.1}(c) that
\begin{equation*}
\|g_1g_2\|_{{_0}W^r_p(J;L_p)}
\le C_0 (\|g_1\|_\infty + \|g_1\|_{W^r_p(J;L_p)})\|g_2\|_{{_0}\FF_4(a)},
\quad (g_1,g_2)\in \FF_4(a)\times {_0}\FF_4(a)
\end{equation*}
where $r=1-1/2p$.
Next, observe that again by Proposition~\ref{pro:6.1}(c)
\begin{equation*}
\begin{split}
\|(\partial_i g_1)g_2\|_{L_p(J;W^{\theta}_p)}
&\le \|\partial_i g_1\|_{L_p(J;W^{\theta}_p)}\|g_2\|_{{_0}BC(J;L_\infty)}
+\|\partial_i g_1\|_{L_p(J;L_\infty)}\|g_2\|_{{_0}BC(J;W^{\theta}_p)}\\&\le C_0\|g_1\|_{\FF_4(a)}\|g_2\|_{{_0}\FF_4(a)}
\end{split}
\end{equation*}
where $\theta=1-1/p$.
Moreover,
\begin{equation*}
\begin{split}
\|g_1\partial_i g_2\|_{L_p(J;W^{\theta}_p)}
&\le \|g_1\|_{L_p(J;W^{\theta}_p)}\|\partial_i g_2\|_{{_0}BC(J;L_\infty)}
+\|g_1\|_\infty \|\partial_i g_2\|_{L_p(J;W^{\theta}_p)}\\
& \le C_0(\|g_1\|_\infty+\|g_1\|_{\FF_4(a)})\|g_2\|_{{_0}\FF_4(a)}.
\end{split}
\end{equation*}
The estimates above in conjunction with \eqref{norm}
yields \eqref{F-M1}.
\smallskip\\
(c) follows from (b) by setting $g_2=\partial_j h$
and from Proposition~\ref{pro:6.1}(e),
which certainly is also true for ${_0}\EE_4(a)$.
\end{proof}
\smallskip
\noindent
{\bf Proof of Proposition~\ref{pro:estimates-K}:}

It follows as in the proof of \cite[Proposition 6.2]{PrSi09a}
that $N_b\in C^\omega(\EE(a),\FF(a))$, and moreover,
that
$DH_b(z)\in \Li({_0}\EE(a),{_0}\FF(a))$ for $z\in\EE(a)$.
It thus remains to prove the estimates
stated in the proposition.

Without always writing this explicitly, all the
estimates derived below will be uniform in $a\in (0,a_0]$, for
$a_0>0$ fixed. Moreover, all estimates will be uniform for
$(\ba u,\ba \pi,\ba q,\ba h)\in {_0}\EE(a)$.
\goodbreak
\smallskip
\noindent
(i)
Without changing notation we consider here
the extension of $h$
from $\R^n$ to $\R^{n+1}$ defined by
$h(t,x,y)=h(t,x)$ for $(x,y)\in\R^n\times\R$ and $t\in J$.
With this interpretation we have
\begin{equation}
\label{F-1}
\begin{split}
\|\partial  h\|_{\infty, J\times \R^{n+1}}
=\|\partial  h\|_{\infty, J\times \R^{n}},
\quad h\in\EE(a),
\quad \partial\in\{\partial_j,\Delta,\partial_t\},
\end{split}
\end{equation}
where $\|\cdot\|_{\infty,U}$
denotes the sup-norm for the set $U\subset J\times \R^{n+1}$.
Next we observe that
\begin{equation}
\label{F-2}
\begin{split}
& BC(J;BC(\R^{n+1}))\cdot L_p(J;L_p(\R^{n+1}))
\hookrightarrow L_p(J;L_p(\R^{n+1})),\\
& BC(J;L_p(\R^{n+1}))\cdot L_p(J;BC(\R^{n+1}))
 \hookrightarrow L_p(J;L_p(\R^{n+1})),\\
& BC(J;BC(\R^{n+1}))\cdot BC(J;BC(\R^{n+1}))
\hookrightarrow BC(J;BC(\R^{n+1})),
\end{split}
\end{equation}
that is, multiplication is continuous and bilinear in the
indicated function spaces (with norm  equal to 1).
\smallskip\\
Let us first consider the term $F_1(u,h):=|\nabla h|^2\partial^2_y
u$ appearing in the definition of~$F$. Its  Fr\'echet
derivative at $(u,h)$ is given by
\begin{equation*}
DF_1(u,h)[\ba u,\ba h] =|\nabla h|^2\partial^2_y\ba u +2
 (\nabla h|\nabla \ba h)\partial^2_y u.
\end{equation*}
Suppose $(\ba u,\ba h)\in {_0}\EE_1(a)\times{_0}\EE_4(a)$. From
\eqref{F-1}, the first and third line in \eqref{F-2} and
Proposition~\ref{pro:6.1}(d) follows the estimate
\begin{equation*}
\begin{split}
\|DF_1(u,h)[\ba u,\ba h]\|_{{_0}\FF(a)}
\le C_0\|\nabla h\|_\infty(\|\nabla h\|_{\infty}+ \|u\|_{\EE_1(a)})
(\|\ba u\|_{{_0}\EE_1(a)}+\|\ba h\|_{{_0}\EE_4(a)})\\
\end{split}
\end{equation*}
for all $(u,h)\in\EE_1(a)\times \EE_4(a)$. It is important to
note that the constant $C_0$ does not depend on the
length of the interval $J=(0,a)$ for $a\in (0,a_0]$.

Next, let us take a closer look at the term
$F_2(u,h):=\Delta h\partial_y u$ in the definition of $F$.
The Fr\'echet derivative is
$
DF_2(u,h)[\ba u,\ba h]=\Delta h\partial_y\ba u
+\Delta\ba h \partial_y u.
$
We infer from \eqref{F-1}, the first and second line in
\eqref{F-2}, and Proposition~\ref{pro:6.1} that
\begin{equation*}
\begin{split}
\|DF_2(u,h)[\ba u,\ba h]\|_{{_0}\FF(a)} &\le
(\|\Delta h\|_{L_p(J;L_\infty) }+\|\partial_yu\|_{L_p(J;L_p)})\cdot\\
&\hspace{1cm} (\|\partial_y\ba u\|_{_{0}BC(J;L_p)}
+\|\Delta\ba h\|_{{_0}BC(J;L_\infty)})\\
&\le C_0(\|h\|_{\EE_4(a)}+\|u\|_{\EE_1(a)})
(\|\ba u\|_{{_0}\EE_1(a)}+\|\ba h\|_{{_0}\EE_4(a)})\\
\end{split}
\end{equation*}
for all $(u,h)\in \EE_1(a)\times\EE_4(a)$.
\medskip\\
The derivative of $F_3(u,h):=(u| \nabla h)\partial_y u$, where
$\nabla h:=(\nabla h,0)$, is given by
\begin{equation*}
DF_3(u,h)[\ba u,\ba h]= (\ba u|\nabla h)\partial_y u +
(u|\nabla h)\partial_y\ba u + (u| \nabla\ba h) \partial_y u
\end{equation*}
and it follows from \eqref{F-1}--\eqref{F-2} and
Proposition~\ref{pro:6.1}(a),(d) that there
is a constant $C_0>0$ such that
\begin{equation*}
\begin{split}
\|DF_3(u,h)[\ba u,\ba h]\|_{{_0}\FF(a)}
\le C_0 (\|\nabla h\|_\infty +\|u\|_\infty)\|u\|_{\EE_1(a)}
(\|\ba u\|_{{_0}\EE_1(a)}+\|\ba h\|_{{_0}\EE_4(a)})\\
\end{split}
\end{equation*}
for all $(u,h)\in \EE_1(a)\times\EE_4(a)$.
\smallskip\\
Let us also consider the term $F_4(u,h):=\partial_t h\partial_yu$.
Observing that
\begin{equation*}
DF_4(u,h)[\ba u,\ba h] =\partial_t h\partial_y\ba u+\partial_t\ba
h\partial_y u,
\end{equation*}
that
$\partial_t:\EE_4(a)\to \FF_4(a)$
is linear and continuous and
\begin{equation}
\label{F-5}
\begin{split}
\FF_4(a)\hookrightarrow L_p(J; BC^1(\R^n))\cap BC(J; BC^1(\R^n))
\end{split}
\end{equation}
we conclude from \eqref{F-1}--\eqref{F-5} and Proposition
\ref{pro:6.1}(a),(c) that there is a constant $C_0=C_0(a_0)$ such
that
\begin{equation*}
\begin{split}
\|DF_4(u,h)[\ba u,\ba h]\|_{{_0}\FF(a)}
&\le (\|\partial_th\|_{L_p(J;L_\infty)}+\|\partial_y u\|_{L_p(J;L_p)})\\
&\hspace{1cm}
        (\|\partial_y\ba u\|_{{_0}BC(J;L_p)}+
        \|\partial_t\ba h\|_{{_0}BC(J;L_\infty)})\\
&\le C_0(\|h\|_{\EE_4(a)}+\|u\|_{\EE_1(a)})
(\|\ba u\|_{{_0}\EE_1(a)}+\|\ba h\|_{{_0}\EE_4(a)})\\
\end{split}
\end{equation*}
for all $(u,h)\in \EE_1(a)\times\EE_4(a)$.

The derivative of $F_5(\pi,h):=\partial_y\pi\nabla h$
is given by
$$
DF_5(\pi,h)[\ba\pi,\ba h] =\partial_y\ba\pi\nabla h+\partial_y\pi\nabla\ba h.
$$
It follows from \eqref{F-1}--\eqref{F-2} and
Proposition~\ref{pro:6.1}(d) that there exists $C_0$ such
that
\begin{equation*}
\begin{split}
\|DF_5(\pi,h)[\ba \pi,\ba h]\|_{{_0}\FF(a)}
&\le (\|\nabla h\|_\infty+\|\partial_y\pi\|_{L_p(J;L_p)})\\
&\hspace{1cm}
        (\|\partial_y \ba \pi\|_{L_p(J;L_p)}+\|\nabla h\|_{{_0}BC(J;L_\infty)})\\
&\le C_0(\|\nabla h\|_\infty+\|\pi\|_{\EE_2(a)})
(\|\ba\pi\|_{{_0}\EE_2(a)}+\|\ba h\|_{{_0}\EE_4(a)})\\
\end{split}
\end{equation*}
for all $(\pi,h)\in \EE_2(a)\times \EE_4(a)$. The remaining terms
in the definition of $F$ can be analyzed in the same way.
Summarizing we have shown that there is a constant $C_0$
such that
\begin{equation}
\label{F1-summary}
\begin{split}
\|DF(u,\pi,h)&[\ba u,\ba\pi,\ba h]\|_{{_0}\FF_1(a)} \\
\le
& C_0\big[\|\nabla h\|_\infty +\|\nabla h\|^2_{\infty}+\|(u,\pi,h)\|_{\EE(a)} \\
&{\hskip1.7cm}+(\|\nabla h\|_{\infty}\!+\|u\|_{\infty})
\|u\|_{\EE_1(a)}\big]
\|(\ba u,\ba\pi,\ba h)\|_{{_0}\EE(a)}
\end{split}
\end{equation}
for all $(u,\pi,h)\in \EE(a)$ and all $a\in (0,a_0]$.
\smallskip\\
(ii) We will now  consider the nonlinear function
$F_d(u,h)=(\nabla h| \partial_y v)$, stemming from the
transformed divergence. Since $h(x,y):=h(x)$ does not depend on
$y$ we have
\begin{equation}
\label{Fd-1} F_d(u,h)=(\nabla h| \partial_yu)
=\partial_y (\nabla h | u).
\end{equation}
We note that
\begin{equation}
\label{Fd-2}
\partial_y\in {\Li}
\big(H^1_p(J;L_p(\R^{n+1})),
H^1_p(J;H^{-1}_p(\R^{n+1}))\big).
\end{equation}
The norm of this linear mapping does not depend on the length of
the interval $J=[0,a]$. It is easy to see that multiplication is
continuous in the following function spaces:
\begin{equation}
\label{Fd-3}
\begin{split}
&H^1_p(J;BC(\R^{n+1}))\cdot H^1_p(J;L_p(\R^{n+1}))
\hookrightarrow H^1_p(J;L_p(\R^{n+1})) \\
&BC(J;BC^1(\dot\R^{n+1}))\cdot L_p(J;H^1_p(\dot\R^{n+1}))
\hookrightarrow L_p(J;H^1_p(\dot\R^{n+1})).
\end{split}
\end{equation}
The derivative of $F_d$ at
$(u,h)\in \EE_1(a)\times\EE_4(a)$ is given by
\begin{equation*}
DF_d(u,h)[\ba u,\ba h] =(\nabla h|\partial_y\ba  u)
+ (\nabla\ba h |\partial_y u) =\partial_y ((\nabla h|\ba u)+
(\nabla\ba h| u)).
\end{equation*}
We want to derive
a uniform estimate for $DF_d(u,h)[\ba u,\ba h]$
which does not depend on the length of the interval $J=[0,a]$.
We conclude from \eqref{F-1}--\eqref{F-5} that
\begin{equation*}
\begin{split}
&\| (\nabla h|\ba u ) \|_{{_0}H^1_p(J;L_p)}
\sim\|(\nabla h|\ba u)\|_{L_p(J;L_p)} +\|(\partial_t\nabla h |\ba u)\|_{L_p(J;L_p)}
+\|(\nabla h|\partial_t\ba u)\|_{L_p(J;L_p)}\\
&\le \|\nabla  h\|_\infty\|\ba u\|_{L_p(J;L_p)}
+\|\partial_t\nabla h\|_{L_p(J;L_\infty)}\|\ba u\|_{{_0}BC(J;
L_p)}
+\|\nabla h\|_\infty \|\partial_t\ba u\|_{L_p(J;L_p)}\\
&\le C_0 (\|\nabla h\|_\infty +\|h\|_{\EE_4(a)})\|\ba
u\|_{{_0}H^1_p(J;L_p)}.
\end{split}
\end{equation*}
Similar arguments also yield
$
\| (\nabla\ba h| u )\|_{{_0}H^1_p(J;L_p)} \le C_0
\|u\|_{H^1_p(J;L_p)}\|\ba h\|_{{_0}\EE_4(a)}.
$
These estimates in combination with \eqref{Fd-2} show that there
is a constant $C_0$ such that
\begin{equation*}
\begin{split}
&\| (\nabla h|\partial_y\ba u)+(\partial_y u|\nabla\ba h)\|_{{_0}H^1_p(J;H^{-1}_p)} \\
&\quad \le C_0(\|\nabla h\|_\infty +\|h\|_{\EE_4(a)}+\|u\|_{\EE_1(a)})
 (\|\ba u\|_{{_0}\EE_1(a)}+\|\ba h\|_{{_0}\EE_4(a)})
\end{split}
\end{equation*}
for all $(u,h)\in \EE_1(a)\times \EE_4(a)$, where  $C_0$ is
uniform in $a\in (0,a_0]$. Observing that
\begin{equation*}
\|(\nabla h|\partial_y\ba u )\|_{L_p(J;L_p)} +\Sigma_{j=1}^{n+1}
\|(\partial_j\nabla h|\partial_y\ba u)
+(\nabla h | \partial_j\partial_y \ba u) \|_{L_p(J;L_p)}
\end{equation*}
defines an equivalent norm for
$\|(\nabla h |\partial_y\ba u )\|_{L_p(J;H^1_p)}$, we infer once more from
\eqref{F-1}--\eqref{F-2} and Propostion~\ref{pro:6.1} that
\begin{equation*}
\begin{split}
&\|(\nabla h|\partial_y\ba u )\|_{L_p(J;H^1_p)}
\le C_0(\|h\|_{\EE_4(a)}+\|\nabla h\|_\infty)\|\ba u\|_{{_0}\EE_1(a)}\\
&\|(\nabla\ba h|\partial_y u)\|_{L_p(J;H^1_p)} \le
C_0\|u\|_{L_p(J;H^2_p)}\|\ba h\|_{{_0}\EE_4(a)}.
\end{split}
\end{equation*}
Summarizing we have shown that there exists a constant $C_0$ such
that
\begin{equation}
\label{Fd-summary}
\begin{split}
\|DF_d(u,h)&[\ba u,\ba h]\|_{{_0}\FF_2(a)} \\
&\le C_0 (\|\nabla h\|_\infty+\|h\|_{\EE_4(a)}+\|u\|_{\EE_1(a)})
(\|\ba u\|_{{_0}\EE_1(a)}+\|\ba h\|_{{_0}\EE_4(a)})
\end{split}
\end{equation}
for all $(u,h)\in \EE_1(a)\times \EE_4(a)$ and $a\in (0,a_0]$.
\medskip\\
\noindent (iii) We remind that
\begin{equation}
\label{jump} [\![\mu \partial _i\;\cdot]\!]
\in{\Li}\big(H^1_p(J;L_p(\dot\R^{n+1}))\cap
L_p(J;H^2_p(\dot\R^{n+1})),\EE_3(a)\big)
\end{equation}
where $[\![\mu \partial _i u]\!]$ denotes the jump of the quantity
$\mu\partial _i u$ with $u$ a generic function from
$\dot\R^{n+1}\to\R$, and where $\partial _i =\partial_{x _i} $ for
$i=1,\ldots,n$ and $\partial_{n+1} =\partial_y$.
\smallskip\\
The mapping $G(u,q,h)$ is made up of terms of the form
\begin{equation*}
[\![\mu \partial_i u_k]\!] \partial_j h, \quad [\![\mu\partial_i
u_k]\!] \partial_j h\partial_l h, \quad q\partial_j h, \quad
\Delta h\partial_j h, \quad G_\kappa(h), \quad
G_\kappa(h)\partial_jh
\end{equation*}
where $u_k$ denotes the $k$-th component of a function
$u\in\EE_1(a)$. It follows from Lemma~\ref{le:2}(a) and
\eqref{jump} that the mappings
\begin{equation*}
\begin{split}
&(u,h)\mapsto [\![\mu \partial_i u_k]\!] \partial_j h,\
[\![\mu\partial_i u_k]\!] \partial_j h\partial_l h
:\EE_1(a)\times\EE_4(a)\to\EE_3(a), \\
& (q,h)\mapsto q\partial_jh :\EE_3(a)\times\EE_4(a)\to \EE_3(a),
\quad h\mapsto \Delta h\partial_jh:\EE_4(a)\to \EE_3(a)
\end{split}
\end{equation*}
are multilinear and continuous.
Let us now take a closer look at the term $G_1(u,h):=[\![\mu
\partial_i u_k]\!] \partial_j h$. Its Fr\'echet derivative is
given by
\begin{equation*}
DG_1(u,h)[\ba u,\ba h]=\partial_j h [\![\mu \partial_i \ba u_k]\!]
+[\![\mu \partial_i u_k]\!] \partial_j\ba h.
\end{equation*}
Setting $g_1=\partial_j h$ and $g_2:=[\![\mu \partial_i \ba
u_k]\!]$ we obtain from \eqref{M1} and \eqref{jump} the estimate
\begin{equation*}
\|\partial_j h [\![\mu \partial_i \ba u_k]\!]\|_{{_0}\EE_3(a)} \le
C_0 (\|\nabla h\|_\infty + \|\nabla h\|_{\EE_3(a)}) \|\ba
u\|_{{_0}\EE_1(a)}.
\end{equation*}
On the other hand, setting $g:=[\![\mu \partial_i u_k]\!]$ we
conclude from \eqref{M2} and \eqref{jump} that
\begin{equation*}
\|[\![\mu \partial_i u_k]\!] \partial_j\ba h\|_{{_0}\EE_3(a)} \le
C_0 \|u\|_{\EE_1(a)}\|\ba h\|_{{_0}\EE_4(a)}.
\end{equation*}
Consequently,
\begin{equation}
\label{G1-estimate}
\begin{split}
\|DG_1(u,h)&[\ba u,\ba h]\|_{{_0}\EE_3(a)} \\
&\le C_0 (\|\nabla h\|_\infty +\!\|\nabla
h\|_{\EE_3(a)}+\|u\|_{\EE_1(a)}) (\|\ba u\|_{{_0}\EE_1(a)}+ \|\ba
h\|_{{_0}\EE_4(a)})
\end{split}
\end{equation}
for all $(u,h)\in\EE_1(a)\times \EE_4(a)$, and all $a\in (0,a_0]$.
\medskip\\
Given $(u,h)\in\EE_1(a)\times\EE_4(a)$ let
$G_2(u,h):=[\![\mu\partial_i u_k]\!] \partial_j h\partial_l h$.
The Fr\'echet derivative of $G_2$ is given by
\begin{equation*}
DG_2(u,h)[\ba u, \ba h] = \partial_j h\partial_l h
[\![\mu\partial_i\ba u_k]\!] +[\![\mu\partial_i u_k]\!]\partial_j
h \partial_j\ba h +[\![\mu\partial_i u_k]\!]\partial_l h
\partial_j\ba h.
\end{equation*}
From Corollary \ref{cor-E-3}(a),(b) and \eqref{jump} follows that
there is a constant $C_0$ such that
\begin{equation}
\label{G2-estimate}
\begin{split}
\|DG_2(u,h)[\ba u,\ba h]\|_{{_0}\EE_3(a)}
&\le C_0 (\|\nabla h\|_\infty +\|h\|_{\EE_4(a)})^2\|\ba u\|_{{_0}\EE_1(a)} \\
&\!\!\!\! + C_0\big(\|\nabla h\|_{BC(J;BC^1)}
+\|h\|_{\EE_4(a)}\big)\|u\|_{\EE_1(a)}\|\ba h\|_{{_0}\EE_4(a)})
\end{split}
\end{equation}
for all $(u,h)\in\EE_1(a)\times \EE_4(a)$
and all $a\in (0,a_0]$.
\medskip\\
The terms $G_3(q,h):=q\partial_j h$ and $G_4(h):=\Delta
h\partial_jh$ can be analyzed in the same way as the term $G_1$,
yielding the following estimates
\begin{equation}
\label{G3-estimate}
\begin{split}
\|DG_3(q,h)&[\ba q,\ba h]\|_{{_0}\EE_3(a)} \\
&\le C_0 (\|\nabla h\|_\infty \! +\|\nabla
h\|_{\EE_3(a)}\!+\|q\|_{\EE_3(a)}) (\|\ba q\|_{{_0}\EE_3(a)}+ \|\ba
h\|_{{_0}\EE_4(a)})
\end{split}
\end{equation}
as well as
\begin{equation}
\label{G4-estimate}
\begin{split}
\|DG_4(h)&\ba h\|_{{_0}\EE_3(a)} \le C_0 (\|\nabla h\|_\infty+\|\nabla
h\|_{\EE_3(a)}+\|\nabla^2 h\|_{\EE_3(a)}) \|\ba
h\|_{{_0}\EE_4(a)}.
\end{split}
\end{equation}
\smallskip
Let us now consider the term
\begin{equation*}
G_5(h)=\frac{1}{(1+\beta)\beta}(\partial_jh)^2\Delta h, \quad
\beta(t,x):=\sqrt{1+|\nabla h(t,x)|^2},
\end{equation*}
appearing in the definition of $G_\kappa $. The Fr\'echet
derivative of $G_5$ at $h$ is given by
\begin{equation*}
\label{G5-derivative}
\begin{split}
DG_5(h)\ba h&= -\Big(\frac{1}{(1\!+\!\beta)^2\beta^2}
+\frac{1}{(1\!+\!\beta)\beta^3}\Big)
(\partial_j h)^2\Delta h\partial_kh\partial_k\ba h \\
&\hspace{0.5cm}+ \frac{1}{(1\!+\!\beta)\beta}\big(2\partial_jh\Delta
h\partial_j\ba h+(\partial_j h)^2\Delta\ba h\big).
\end{split}
\end{equation*}
Before continuing, we note that the term $1/(1+\beta)$ can be
treated in exactly the same way as $1/\beta$, as a short
inspection of the proof of Lemma~\ref{le:2}(d) shows. It follows
then from Corollary~\ref{cor-E-3}(a)--(b) and from \eqref{quotient}
that there is a constant $C_0$ such that
\begin{equation}
\label{G5-estimate}
\|DG_{5}(h)\ba h\|_{{_0}\EE_3(a)}
\le
C_0\big[P(\|\nabla h\|_\infty)\!+\! Q(\|\nabla h\|_{BC(J;BC^1)},\|h\|_{\EE_4(a)})\big]\|\ba h\|_{{_0}\EE_4(a)}
\end{equation}
for all $h\in\EE_4(a)$ and all $a\in (0,a]$,
where $P$ and $Q$ are polynomials
with coefficients equal to one and vanishing zero-order terms.
Analogous arguments can be used for the remaining terms
${(\nabla h | \nabla^2 h\nabla h)}/{\beta^3}$ and
$G_\kappa(h)\partial_j h$ appearing in $G$, yielding the same
estimate as in \eqref{G5-estimate}.
\smallskip\\
(iv) It remains to consider the nonlinear term
$H_b(v,h):=(b-\gamma v| \nabla h)$.
The Fr\'echet derivative is given by
$
DH_b(v,h)[\ba v,\ba h]=-(\nabla h| \gamma\ba v) +(b-\gamma v |\nabla\ba h).
$
From Lemma~\ref{le:3}(b) with $g_1=\partial_j h$ and
$g_2=\gamma\ba v_k$, where $\ba v_k$ denotes the $k$-th component of $\ba v$, follows
$
\|(\nabla h| \gamma\ba v)\|_{{_0}\FF_4(a)}
\le C_0(\|\nabla h\|_\infty+\|h\|_{\EE_4(a)})\|\ba v\|_{{_0}\EE_1(a)}.
$
Lemma~\ref{le:3}(c) with $g=(b-\gamma v)_k$ and $h=\ba h$
implies
\begin{equation*}
\begin{split}
\|(b-\gamma v|\nabla\ba h)\|_{{_0}\FF_4(a)}
\le C_0 (\|b-\gamma v\|_\infty +\|b-\gamma v\|_{\FF_4(a)})
 \|\ba h\|_{{_0}\EE_4(a)}.
\end{split}
\end{equation*}
We have, thus, shown that
\begin{equation}
\label{G-b}
\begin{split}
\|DH_b(v,h)\| \le C_0(\|\nabla h\|_{\infty} + \|h\|_{\EE_4(a)}
+\|b-\gamma v\|_{BC(J;BC)\cap \FF_4(a)}).
 \end{split}
\end{equation}
Combining the estimates in
\eqref{F1-summary}, \eqref{Fd-summary} and
\eqref{G1-estimate}--\eqref{G-b}
yields the assertions of
Proposition~\ref{pro:estimates-K}.
\hfill{$\square$}
\bigskip

\end{document}